\newtheorem{lemma}{Lemma}[section]
\newtheorem{theorem}{Theorem}[section]
\newtheorem{remark}{Remark}[section]
\newtheorem{example}{Example}[section]
\def\thelemma{\arabic{section}.\arabic{lemma}}
\def\thetheorem{\arabic{section}.\arabic{theorem}}
\def\thecorollary{\arabic{section}.\arabic{corollary}}
\def\thedefinition{\arabic{section}.\arabic{definition}}
\def\theexample{\arabic{section}.\arabic{example}}
\def\theproposition{\arabic{section}.\arabic{proposition}}
\def\theassumption{\arabic{section}.\arabic{assumption}}
\def\theremark{\arabic{section}.\arabic{remark}}
\newcommand{\manualnames}[1]{
\def\thelemma{#1.\arabic{lemma}}
\def\thetheorem{#1.\arabic{theorem}}
\def\thecorollary{#1.\arabic{corollary}}
\def\thedefinition{#1.\arabic{definition}}
\def\theexample{#1.\arabic{example}}
\def\theproposition{#1.\arabic{proposition}}
\def\theassumption{#1.\arabic{assumption}}
\def\theremark{#1.\arabic{remark}}
}
\newcommand{\beginsec}{
}
\numberwithin{equation}{section}
\newcommand{\la}{\lambda}
\newcommand{\al}{\alpha}
\newcommand{\gam}{\gamma}
\newcommand{\s}{\sigma}
\newcommand{\sig}{\sigma}
\newcommand{\del}{\delta}
\newcommand{\Gam}{\mathnormal{\Gamma}}
\newcommand{\Del}{\mathnormal{\Delta}}
\newcommand{\La}{\mathnormal{\Lambda}}
\newcommand{\Sig}{\mathnormal{\Sigma}}
\newcommand{\Om}{\mathnormal{\Omega}}
\newcommand{\N}{{\mathbb N}}
\newcommand{\Q}{{\mathbb Q}}
\newcommand{\R}{{\mathbb R}}
\newcommand{\Z}{{\mathbb Z}}
\newcommand{\PP}{{\mathbb P}}
\newcommand{\calA}{{\cal A}}
\newcommand{\calB}{{\cal B}}
\newcommand{\calD}{{\cal D}}
\newcommand{\calF}{{\cal F}}
\newcommand{\calG}{{\cal G}}
\newcommand{\calM}{{\cal M}}
\newcommand{\calP}{{\cal P}}
\newcommand{\calQ}{{\cal Q}}
\newcommand{\calS}{{\cal S}}
\newcommand{\calV}{{\cal V}}
\newcommand{\w}{\wedge}
\newcommand{\iy}{\infty}
\newcommand{\be}{\begin{equation}}
\newcommand{\ee}{\end{equation}}
\newcommand{\renyi}{R\'enyi }
\newcommand{\skp}{\vspace{\baselineskip}}
\newcommand{\noi}{\noindent}
\newcommand{\lan}{\langle}
\newcommand{\ran}{\rangle}
\newcommand{\lfl}{\lfloor}
\newcommand{\rfl}{\rfloor}
\newcommand{\lce}{\lceil}
\newcommand{\rce}{\rceil}
\newcommand{\eps}{{\varepsilon}}
\newcommand{\Hbar}{{\bar{H}}}
\newcommand{\Sbar}{{\bar{S}}}
\newcommand{\thetabar}{{\bar{\theta}}}
\newcommand{\xbar}{{\bar{x}}}
\newcommand{\Gmc}{{\mathcal{G}}}
\newcommand{\Rmb}{{\mathbb{R}}}
\newcommand{\mar}{\varsigma}
\newcommand{\PF}{\mathcal{P}\mathcal{F}}
\begin{document}

\title{Robust bounds and optimization at the large deviations scale for queueing models via R\'enyi divergence
}
\author{Rami Atar\thanks{Viterbi Faculty of Electrical Engineering, Technion}
\and Amarjit Budhiraja\thanks{Department of Statistics and Operations Research, University of North Carolina}
\and Paul Dupuis\thanks{Division of Applied Mathematics, Brown University}
\and Ruoyu Wu\thanks{Department of Mathematics, Iowa State University}}
\date{July 29, 2020}
\maketitle

\begin{abstract}
This paper develops tools to obtain robust probabilistic estimates
for queueing models at the large deviations (LD) scale.
These tools are based on the recently introduced robust R\'enyi bounds,
which provide LD estimates (and more generally risk-sensitive (RS) cost estimates)
that hold uniformly over an uncertainty class of models,
provided that the class is defined in terms of R\'enyi divergence
with respect to a reference model and that estimates are available for the reference model.
One very attractive quality of the approach is that the class to which the estimates
apply may consist of hard models, such as highly non-Markovian models
and ones for which the LD principle is not available.
Our treatment provides exact expressions as well as bounds
on the R\'enyi divergence rate on families of marked point processes,
including as a special case renewal processes.
Another contribution is a general result that translates robust RS control problems,
where robustness is formulated via R\'enyi divergence,
to finite dimensional convex optimization problems,  when the control set is a finite dimensional convex set.
The implications to queueing are vast, as they apply in great generality.
This is demonstrated on two non-Markovian queueing models.
One is the multiclass single-server queue considered as a RS control problem,
with scheduling as the control process and exponential weighted queue length as cost.
The second is the many-server queue with reneging, with the probability of atypically large
reneging count as performance criterion. As far as LD analysis is concerned,
no robust estimates or non-Markovian treatment
were previously available for either of these models.

\skp

\noi
{\bf AMS subject classification:}
60F10, 60K25, 94A17

\skp

\noi
{\bf Keywords:}
robust bounds, large deviations scale, risk-sensitive control,
queueing
\end{abstract}

\section{Introduction}

\label{sec1}

An approach for obtaining robust
estimates on probabilistic models at the large deviations (LD) scale,
as well as on risk-sensitive (RS) functionals associated with
these models, has recently been proposed
based on \renyi divergence (RD) estimates \cite{atachodup}.
According to this approach, a family of models is considered that is defined
in terms of RD with respect to a reference model.
A tool, that we call in this paper
{\it robust \renyi bounds} (RRB), is then used to translate LD probability
estimates (and more generally, RS cost estimates) on the reference model
into ones which hold uniformly within this family.
This approach is particularly useful in cases when the reference model
is one that is easier to analyze than the collection of models on which
robust bounds are desired.
This paper applies these ideas to queueing models.

Indeed, queueing forms an ideal domain of applicability
of this approach,
for two main reasons. First, it is very often the case that
Markovian queueing models are considerably easier to handle than
non-Markovian ones. Among the many examples that strongly support this
assertion we mention
(1) the $M/M/n$ model for which the many-server law of large numbers (LLN)
limit is trivial as opposed to the $G/G/n$ counterpart
for which theory is involved and, in particular,
limit processes lie in the space of measure valued trajectories \cite{kasram11};
similarly, at the central limit theorem (CLT) scale, these two models
give rise to merely a diffusion on $\R$ \cite{halfin} and
a considerably more complicated, measure-valued diffusion \cite{kasram13}, respectively.
(2)
Queueing control problems, that in a Markovian setting can be analyzed
and solved as Markov decision processes (see numerous examples in \cite{sen09}),
but in a general setting, such as when service times are non-exponential,
require an infinite dimensional state descriptor and are less tractable.
Second, the robustness of estimates to perturbations
in the underlying distributions is important in applications.
Exponential service distribution (necessary for Markovity) is often assumed without
good statistical evidence or physical reasoning.
For example, a detailed statistical study
argues that there is a good fit of service time distributions in call centers
to lognormal \cite{brown05}, but there are
far more papers on many-server scaling limits, aimed at modelling large call centers,
in which servers operate with exponential distributions
than ones treating more general distributions.
In a much broader perspective, uncertainty in the underlying
distributions is a central issue in applying probabilistic queueing models to
real world systems.

To put LD estimates in a broader context of scaling limits
as far as sensitivity to perturbations in the underlying distributions is
concerned, it should be mentioned that most LLN and CLT results
in the queueing literature are tolerant to such perturbations
in the sense that the limits depend only on first or first and second
moments of the primitive data (the many-server limit regime
alluded to above is an exception). This has made these regimes
attractive for approximations and indeed provided motivation to study them.
On the other hand, the LD regime does not have obvious robustness
properties, as probabilities of rare events are sensitive to
the assumed tails of the primitives.
Consequently, model uncertainty issues and sensitivity to distributional
perturbations are much harder to deal with.
As already mentioned, this paper addresses these questions
by developing the approach of \cite{atachodup} in the context of queueing models.

The development in this paper, which involves performance measures that are determined by rare events and bounds defined in terms of \renyi divergence,
is analogous to prior work that bounds ordinary performance measures in terms of 
Kullback-Leibler divergence (also known as relative entropy).
This approach originated in a robust optimal control framework in \cite{dupjampet,petjamdup}, and was subsequently rediscovered a number of times and much developed in the literature \cite{iye,limsha,nilgha}.
The corresponding use in model uncertainty bounds and sensitivity bounds appeared later,
as in for example \cite{chodup,dupkatpanple}.

The literature on LD estimates for queueing models is rich.
A partial list of works dealing with non-Markovian queueing models
is as follows.
In \cite{ana2}, weak limit theorems are proved for the behavior of
a $G/G/1$ queue conditioned to exhibit an atypically large waiting time.
In \cite{gly94} the tail behavior of the waiting time steady state
distribution is identified for a large class of single server queues.
In \cite{maj06}, multiclass feedforward networks
are studied at the moderate deviations scale.
In \cite{puh07}, the LD principle (LDP) is established
and the rate function is identified for the generalized Jackson network.
See further references in these sources as well as in
the monographs \cite{gan04, shwwei, bud19} and the paper \cite{dupell3}
for numerous results on a variety of models in both Markovian and non-Markovian settings.

Sample path LDP of queueing models
are particularly difficult in network settings, due to the fact that
these models have discontinuous statistics.
References \cite{dupell3} \cite{gan04}, \cite{shwwei}, \cite{maj06},
\cite{puh07} do succeed is addressing such LDP.
Yet, even when tools such as LDP and formulas for the rate function are available,
a direct approach for obtaining estimates for an event of interest, uniformly
over a given family of models, may be notoriously hard,
as it amounts to solving a variational problem for each member in the family.
Unlike such a naive approach,
under the approach based on RRB, LD estimates have to be studied only for the reference model.
In fact, the approach does not even require that the LDP holds for each model in the family.

The rest of this paper is organized as follows.
The general approach that uses RRB to get robust estimates on families of models
is summarized in \S \ref{sec2}. In the same section,
an outline of the use of these bounds
for queueing models is provided, showing that for these models
the bounds heavily rely on estimating the \renyi divergence rate (an asymptotic
normalized version of the \renyi divergence) of a renewal process
with respect to a Poisson.
This provides a motivation to study such estimates for various families
of renewal processes. Results in this direction appear in \S \ref{sec3}.
Perhaps surprisingly, it seems that
such calculations have not appeared before in the literature.
In \S \ref{sec4} we provide a general development on RS control, and demonstrate it
with a queueing example.
Estimates on RS control are closely related to LD estimates, and in this section we argue
that they can be addressed by RRB.
A general result developed in \S\ref{sec41}
shows that a dramatic simplification occurs when RRB is used for RS control problems,
by which robust control estimates are transformed into  finite dimensional convex optimization problems  when the control set is a finite dimensional convex set.
In \S \ref{sec42}, we analyze a queueing control problem using this approach.
The model considered is the multi-class $G/G/1$ queue,
in which the control corresponds to scheduling jobs from the various classes.
As a reference model we use known RS control estimates for the multi-class $M/M/1$.
Finally, \S \ref{sec5} provides a queueing example for our robust approach to LD estimates.
The example consists of a queueing model with reneging.
Whereas reneging from queues is a very active research
field, little is known on LD estimates beyond the Markovian setting.
The robust LD estimates provided in this section are on both the
$G/G/1+G$ and the many-server $G/G/n+G$ models.
The reference model on which they rely is the $M/M/n+M$, for which
the sample path LDP has recently been developed in \cite{ABDW1}.

\section{Robust \renyi bounds}\beginsec
\label{sec2}

This section introduces the RRB and the approach that uses these bounds
to quantify robustness.
The RRB are described in \S \ref{sec21} and the form they take
under scaling is derived in \S \ref{sec22}.
In \S \ref{sec23} it is argued that in queueing applications
the \renyi divergence of a renewal process w.r.t.\ a Poisson
is key in the use of the approach, and
the notion of \renyi divergence rate is introduced.

\subsection{\renyi divergence}
\label{sec21}

Fix a measurable space $({\mathcal{S}},\mathcal{F})$ and denote by
$\mathcal{P}$ the set of probability measures on it. For $P,Q\in\mathcal{P}$,
the \textit{relative entropy} is given by
\[
R(Q\Vert P)=%
\begin{cases}
\displaystyle\int\log\frac{dQ}{dP}\,dQ & \text{if }Q\ll P\\
+\infty & \text{otherwise.}%
\end{cases}
\qquad
\]
Introduced in \cite{ren} (see \cite{lievaj} for a comprehensive treatment),
the {\it R\'enyi divergence} of degree $\alpha>1$, for
$P,Q\in\mathcal{P}$, is defined by
\[
R_{\alpha}(Q\Vert P)=%
\begin{cases}
\displaystyle\frac{1}{\alpha(\alpha-1)}\log\int\Big(\frac{dQ}{dP}%
\Big)^{\alpha}dP & \text{if }Q\ll P\\
+\infty & \text{otherwise.}%
\end{cases}
\]
For $\alpha=1$, one sets $R_{1}(Q\Vert P)=R(Q\Vert P)$. Whereas two different
formulas are used for the cases $\alpha=1$ and $\alpha>1$, it is a fact that
$\alpha\mapsto R_{\alpha}(Q\Vert P)$ is continuous on $[1,\alpha^{\ast}]$
provided $R_{\alpha^{\ast}}(Q\Vert P)<\infty$ for some $\alpha^{\ast}>1$. To
mention a few additional properties, one has that $\alpha\mapsto\alpha
R_{\alpha}$ is nondecreasing on $[1,\infty)$, and given $\alpha\geq1$, one
always has $R_{\alpha}(Q\Vert P)\geq0$, and $R_{\alpha}(Q\Vert P)=0$ if and
only if $Q=P$. A property that is of crucial importance in our use of
R\'{e}nyi divergence is its additivity for product measures, in the following
sense:
\begin{equation}
R_{\alpha}(Q_{1}\times Q_{2}\Vert P_{1}\times P_{2})=R_{\alpha}(Q_{1}\Vert
P_{1})+R_{\alpha}(Q_{2}\Vert P_{2}).\label{03}%
\end{equation}

It is well known that exponential integrals and relative entropy satisfy a
convex duality relation, stated as follows. Let $Q\in\mathcal{P}$. Then for
any bounded measurable $g:{\mathcal{S}}\rightarrow\mathbb{R}$,
\begin{equation}
\log\int e^{g}dQ=\sup_{P\in\mathcal{P}}\Big[\int gdP-R(P\Vert
Q)\Big].\label{01}%
\end{equation}
An analogous relation has been shown for R\'{e}nyi divergences
(\cite{atachodup}; related calculations first appeared in \cite{dvitod}).
Namely, fix $\alpha>1$. Then
\begin{equation}
\frac{1}{\alpha}\log\int e^{\alpha g}dQ=\sup_{P\in\mathcal{P}}\Big[\frac
{1}{\alpha-1}\log\int e^{(\alpha-1)g}dP-R_{\alpha}(P\Vert Q)\Big].\label{02}%
\end{equation}
The identity \eqref{02} may indeed be viewed as an extension of \eqref{01}, as
the latter is recovered by taking the formal limit $\alpha\downarrow1$ in the former.

Given $P,Q$ and $\alpha$, as well as an event $A\in\mathcal{F}$, it follows
from \eqref{02} by taking $g(x)=0$ [resp., $-M$] for $x\in A$ [resp., $x\in
A^{c}$] and sending $M\rightarrow\infty$, that
\begin{equation}%
\begin{split}
&  \frac{\alpha}{\alpha-1}\log P(A)-\alpha R_{\alpha}(P\Vert Q)\\
&  \quad\leq\log Q(A)\leq\frac{\alpha-1}{\alpha}\log P(A)+(\alpha-1)R_{\alpha
}(Q\Vert P)
\end{split}
\label{04}%
\end{equation}
(provided that $P(A)>0$ and $Q(A)>0$). 
The first inequality uses \eqref{02} as written, and the second reverses the roles of $P$ and $Q$.
In words: the logarithmic probability
of an event under $Q$ is estimated in terms of the same event under $P$ and
R\'{e}nyi divergence. It is also a fact that both inequalities in \eqref{04}
are tight, in the sense that given $\alpha$, $Q$ and $A$ one can find $P$ that
makes them hold as equalities (with different $P$ for each equality)
\cite{atachodup}.

The point of view of \cite{atachodup} is to regard \eqref{04} as perturbation bounds. Given a
nominal model $P$, \eqref{04} provides performance bounds on a true model $Q$
in terms of performance under $P$ and divergence terms.
The same is true in the more general case of a RS cost, namely
\begin{equation}
\begin{split}
\log\int e^{(\al-1)g}dQ\leq\frac{\alpha-1}{\alpha}\log \int e^{\al g}dP+(\alpha-1)R_{\alpha}(Q\Vert P).
\end{split}
\label{04+}
\end{equation}
In this paper we refer to \eqref{04+} and its special case \eqref{04}
as robust \renyi bounds (RRB).
If one fixes a reference model $P$ and a family $\calQ$ of true models
$Q$ defined by $\{Q:(\al-1)R_\al(Q\|P)<r\}$, some $r>0$, then for any $A$ \eqref{04}
gives  $\sup_{Q\in\calQ}\log Q(A)\le \frac{\al-1}{\al}\log P(A)+r$.
This expresses a uniform estimate on the performance under $Q$ in $\calQ$
in terms of that under $P$ and the size of the family (where the latter term
is interpreted in terms of \renyi divergence).
Clearly, an analogous statement can be made for RS cost
by appealing to \eqref{04+}, and similarly for lower bounds, by working with
$R_\al(P\|Q)$ instead of $R_\al(Q\|P)$.

\subsection{The RRB under scaling}
\label{sec22}

What makes the RRB particularly useful is that they
remain meaningful under standard LD scaling. We first demonstrate this in
a setting of IID random variables  (RVs), and then extend
to a continuous time setting.

\paragraph{IID data.}
Let $Z_{1},Z_{2},Z_{3}\ldots$ be a sequence of RVs, and let
$P$ and $Q$ be two probability measures that make this sequence IID. Let
$P_{n}$ and $Q_{n}$ denote the corresponding laws of $Z^{n}=(Z_{1},\ldots,Z_{n})$.
For each $n$, let $A_{n}$ be an event that is measurable on
$\sigma\{Z^{n}\}$, the $\sigma$-algebra generated by $Z^{n}$. We are
interested in
\[
\frac{1}{n}\log Q(A_{n}).
\]
By the IID assumption, we may appeal to \eqref{03}, according to which
$R_{\alpha}(Q_{n}\Vert P_{n})=nR_{\alpha}(Q_{1}\Vert P_{1})$. Thus by
\eqref{04} we obtain the bounds
\begin{equation}%
\begin{split}
&  \frac{\al}{\alpha-1}\frac{1}{n}\log P(A_n)-\al R_{\alpha}(P_{1}\Vert Q_{1})\\
&  \quad\leq\frac{1}{n}\log Q(A_n)\leq\frac{\alpha-1}{\al}\frac{1}{n}\log P(A_n)+(\alpha-1) R_{\alpha}(Q_{1}\Vert P_{1}).
\end{split}
\label{05}%
\end{equation}
In these bounds, the divergence terms remain of order 1 under scaling, and so
it is possible to compare the asymptotic behavior of $n^{-1}\log Q(A_n)$ to that of
$n^{-1}\log P(A_n)$. Moreover, while standard problems in the theory of LD are
concerned with limits of these expressions, we
emphasize that the bounds \eqref{05} are valid \textit{for all $n$.}

Regarding the normalized logarithmic probability as a performance measure in this setting
is indeed natural for studying probabilities of rare events. Thus our remark from
\S \ref{sec21} regarding uniform estimates on logarithmic probabilities
is relevant also for exponential decay rates. That is, given
$r>0$, let $\calQ$ consist of probability measures $Q$
under which $X_1,X_2,\ldots$ are IID and
$(\al-1)R_{\alpha}(Q_{1}\Vert P_{1})\leq r$. Then \eqref{05} gives
\[
\sup_{Q\in\calQ}\frac{1}{n}\log Q(A_n)
\leq\frac{\alpha-1}{\al}\frac{1}{n}\log P(A_n)+r.
\]
Again, a similar remark holds for RS cost, and
a lower bound is obtained similarly by working with $R_\al(P_1\|Q_1)$.

\paragraph{Beyond IID data.}
When the model is not based on an IID structure one can still apply
the RRB under scaling, but one must address the question
whether the normalized \renyi divergence term scales suitably.
Let us present this issue in a continuous time setting that better suits
the aims of this paper.
Let $\{Z_t,t\in\R_+\}$ be a stochastic process on the measurable space
$(\calS,\calF)$ and, thoughout this paper, for a general probability
measure $Q\in\calP$ denote $Q^Z_t=Q\circ Z|_{[0,t]}^{-1}$
(when there is no room for confusion, the dependence on the
process is omitted from the notation).
Then for any $t>0$, any event $A_t$ measurable on $\sig\{Z|_{[0,t]}\}$,
measure $P$ and collection of measures $\calQ$,
we have by \eqref{04}
\begin{equation}\label{09}
\sup_{Q\in\calQ}\frac{1}{t}\log Q(A_t)
\le
\frac{\al-1}{\al}\frac{1}{t}\log P(A_t)+(\al-1)\sup_{Q\in\calQ}\frac{1}{t}R_\al(Q^Z_t\|P^Z_t).
\end{equation}
If the last term remains bounded as $t\to\iy$
then one obtains uniform LD estimates within the family $\calQ$
by LD estimates on the reference model $P$ and the \renyi
divergence term. This method then remains effective in cases where
the latter term can be computed or estimated.

For statements that involve the limit $t\to\iy$, we shall need
a further piece of notation, used throughout.
Given a process $Z$ on $(\calS,\calF)$ and measures
$P,Q\in\calP$, the {\it \renyi divergence rate} (RDR) of $Q$ w.r.t.\ $P$ associated with
the process $Z$ is defined by
\[
r^Z_{\alpha}(Q\|P)
=\limsup_{t\to\iy}\frac{1}{t}R_{\alpha}(Q^Z_{t}\Vert P^Z_{t}).
\]
For a family $\calQ$ of probability measures, let
the RDR of $\calQ$ w.r.t.\ $P$ and of $P$ w.r.t.\ $\calQ$
be defined, respectively, by
\[
r^Z_\al(\calQ\|P)=\limsup_{t\to\iy}\sup_{Q\in\calQ}
\frac{1}{t}R_\al(Q^Z_t\|P^Z_t),
\qquad
r^Z_\al(P\|\calQ)=\limsup_{t\to\iy}\sup_{Q\in\calQ}
\frac{1}{t}R_\al(P^Z_t\|Q^Z_t).
\]
Again, the dependence on $Z$ will be omitted when there is
no confusion. With this notation, we have
\begin{equation}\label{091}
\limsup_{t\to\iy}\sup_{Q\in\calQ}\frac{1}{t}\log Q(A_t)
\le
\frac{\al-1}{\al}\limsup_{t\to\iy}\frac{1}{t}\log P(A_t)
+(\al-1)r^Z_\al(\calQ\|P).
\end{equation}

Often, the selection of $P$ with which to apply the bound
\eqref{09} or \eqref{091} is based on
considerations of tractability. If, for example, $P$ is a model under which performance
can be explicitly computed then one may use the approach in order to obtain
guaranteed bounds on a set of possibly intractable
models $Q$. Another consideration, that is especially relevant in engineering applications,
is that systems often operate under conditions that are
distinct from those they are designed for. For such systems, the bounds
provide guarantees on their true performance based on designed performance.

As a final general remark, given a particular event or a
sequence of events that are of interest, one can optimize over the parameter
$\alpha$ for the tightest upper and lower bounds. Namely, in both
\eqref{09} and \eqref{091} one may take the infimum over $\al>1$ on the
right hand side. This observation will be used in \S \ref{sec4}.

\subsection{Queueing models}\label{sec23}

Queueing models are described in terms of
service disciplines and stochastic primitives, where the latter term usually refers to
arrival processes, service times, routing and other processes.
The way in which we propose to use the RRB based approach
in the queueing context is by working with \renyi divergence estimates for the underlying
primitives rather than directly with the `state' processes that are used in describing
performance criteria (such as queue lengths, delay, idleness).
This is particularly natural when one views such models as dynamical
systems driven by renewal processes or more general counting processes
(or yet more generally, as marked counting processes).
To demonstrate this point, we provide two examples.

First, consider
the queue length process $X_t$ for a GI/GI/1 queue.
In this single server queue, arrivals follow a renewal process,
denoted by $A_t$, and service times are IID. Let
$S_t$ denote the potential service process: By the time the server
is busy for $t$ units of time, $S_t$ jobs have departed.
Assuming here, for simplicity, that at time zero
the server has no residual work, $S_t$ is also a renewal process.
The queue length satisfies the equations
\[
X_t=X_0+A_t-S(T_t),
\qquad\qquad
T_t=\int_0^t1_{\{X_s>0\}}ds.
\]
For our purpose, the key property is that
$X|_{[0,t]}$ is fully determined by its initial condition
and the primitives $A|_{[0,t]}$, $S|_{[0,t]}$ (this owes to the fact $T_t\le t$ for all $t$).
Hence, if such a queue is to be analyzed by comparison
to M/M/1, the relevant \renyi divergence term dictating
events measurable w.r.t.\ $X|_{[0,t]}$, is $R_\al(Q_t\|P_t)$,
where $Q_t$ is the law of $(A,S)|_{[0,t]}$ as a pair of (independent) renewal processes
and $P_t$ as a pair of Poisson processes.

Next consider a generalized Jackson network.
This is a network of $N$ service stations, each having an external
(possibly void) stream of arrivals, and upon departure from
a service station, jobs are routed probabilistically, according
to a given substochastic $N\times N$ matrix,
to one of the service stations or to leave the system.
Let $\{\xi_i(k)\}$ be $\{0,e_1,\ldots,e_N\}$-valued
RVs according to which these routings are determined:
$\xi_{ij}(k)=1$ dictates that the $k$th $i$-departure
is routed to station $j$.
All arrival processes and potential service processes
are assumed to be mutually independent renewals,
that are also independent of the routing
decision variables, $\xi$. Denote by $X_i$, $E_i$, $S_i$ and $D_i$
the queue length, external arrival, potential service and
departure processes, associated with service station $i$
for $1\le i\le N$. Let also $D_{ij}$ denote the counting process
of jobs departing from station $i$ and routed back to station $j$.
Finally, let $A_i$ denote the counting
process for total arrivals into station $i$, including
external arrivals and reroutings. Then the following
equations are satisfied:
\begin{align*}
X_i&=X_i(0)+A_i-D_i=X_i(0)+A_i-S_i\circ T_i\\
A_i&=E_i+\sum_jD_{ji} \hspace{4em} T_i=\int_0^\cdot1_{\{X_i>0\}}dt\\
D_{ij}&=R_{ij}\circ D_i \hspace{6em} R_{ij}=\sum_{k=1}^\cdot\xi_{ij}(k).
\end{align*}
Thus the dependence of queue length on the stochastic primitives
is far more complicated than in the case of a single node.
Yet, the key property alluded to above is valid in this complicated
scenario. That is, $X_i|{[0,t]}$, $1\le i\le N$, are dictated by their initial condition
and the primitives $E_i|_{[0,t]}$, $S_i|_{[0,t]}$ and
$(R_{ij}\circ S_i)|_{[0,t]}$, $1\le i,j\le N$.
A similar statement is valid for the busyness processes $T_i$,
the counting processes $D_{ij}$, etc.

The special case where $E_i$ and $S_i$ are Poisson
is referred to as a Jackson network. In this case,
the queue length is a Markov process with state space
$\Z_+^N$, and is far easier to analyze
than the non-Markovian model.
The perturbation that is required for translating results on
the Markovian model to the more general one again
has to do with a change of measure from a Poisson
to a renewal process. Once again, the perturbation can be expressed as a \renyi divergence term, this time for each $E_i$ and $S_i$, $1\le i\le N$.
A term that takes into account the routings $(R_{ij}\circ S_i)|_{[0,t]}$
is required only if perturbations of the routing matrix
are considered. This is important as far as robust
performance bounds are concerned, but note that it is
not required for turning a non-Markovian model into Markovian.

As mentioned above,
the Markovian model is easier to handle than the non-Markovian one. As far as LD results are concerned,
the full LDP for the former was established
via a general approach by Dupuis and Ellis, as a special case of a large class of Markovian queueing models \cite{dupell2}.
Building on these results, the identification of the rate function was obtained
by \cite{atadup1} and by \cite{ign2}. Expressions for the rate function in these two references were provided
as a finite-dimensional convex optimization problem, and as a recursive formula, respectively. Denoting a rescaled version of $X$ by $X^n=n^{-1}X(n\cdot)$,
and letting $P$ stand for the probability measure that makes the primitives
$E_i$ and $S_i$ independent Poisson processes,
the LDP for the Jackson network provides an upper bound
in the form of a variational formula for the asymptotic expression
\[
\gamma(P,F)=\limsup_{n\to\iy}\frac{1}{n}\log P(X^n|_{[0,1]}\in F)
\]
where $F$ is any closed (in the $J_1$ topology) set of paths mapping $[0,1]$ to
$\R_+^N$. A typical set of interest is $F=\{\phi:\phi(t)  \in M \mbox{ for some } 
t \in [0,1]\}$, $M=\R_+^N-\prod_{i=1}^N[0,b_i)$,
expressing the buffer overflow event: one of the queues $X^n_i$ exceeds a threshold
$b_i$ some time during $[0,1]$, an event that is rare for large $n$ provided
that the network is stable.

LDP is known also for the generalized (that is, non-Markovian) Jackson network by \cite{puh07},
where the rate function is identified in terms of an optimization problem, that is not in general a convex
optimization problem, and for which a recursive formula such as \cite{ign2} is not available.
Considerably less is known on estimates at this scale which hold for a generalized Jackson network
{\it uniformly} w.r.t.\ the stochastic primitives within certain set.
However, \eqref{091} addresses precisely this question.

Indeed, to state the readily available corollary of \eqref{091}, set
\[
r_\al(\calQ\|P)=\sum_{i=1}^Nr_\al^{E_i}(\calQ\|P)+\sum_{i=1}^Nr_\al^{S_i}(\calQ\|P)
\]
to be the sum of RDR over all primitive processes.
Then given a collection $\calQ$,
we have the following uniform bound on generalized
Jackson networks associated with $Q\in\calQ$
in terms of performance of the Jackson network $P$, namely
\begin{equation}\label{010}
\sup_{Q\in\calQ}\gamma(Q,F)\le
\inf_{\al>1}\Big\{
\frac{\al-1}{\al}\gamma(P,F)
+(\al-1)r_\al(\calQ\|P)
\Big\}.
\end{equation}
As already mentioned, by the LDP, an upper bound
on $\gamma(P,F)$ is known, in the form of a variational formula.
Therefore the usefulness of \eqref{010}
depends on the ability to compute or provide an effective bound also on the
last term, that is, the RDR of a renewal process w.r.t.\ a Poisson.


The case made above for the crucial importance of RDR estimates
for the applicability of the approach can be made in any scenario
where a queueing model is representable as a dynamical system
driven by renewal processes or other counting processes,
and in the special case of Poisson
driving processes is tractable (due to Markovity or for
any other reason).
Therefore the usefulness of studying the RDR in relation to the proposed approach is broad.

\section{Results on RDR}\beginsec
\label{sec3}

Calculations and bounds of entropy rate and R\'{e}nyi entropy rate
have been studied for some families of stochastic processes, including
Markov chains and hidden Markov models \cite{jac08}, \cite{ord04}.
However, the questions that
arise from the above discussion are concerned with the RDR of marked point processes with respect to marked Poisson point processes, also known as a Poisson random measure.
%
To the best of our knowledge, estimates on RDR for such models have not been studied
before. In this section we present some results in this direction.
The marks of the reference Poisson point process we consider will take values in some Polish space $S$ and will have iid distributions given by some probability measure $\mar$ on $(S, \calB(S))$, where $\calB(S)$ denotes the Borel $\sigma$-field on $S$. 
Denote by $\calM_F(S)$ the space of finite measures on $S$ equipped with the usual weak convergence topology.
A marked point process can be represented as a stochastic process $\{N_t\}$ with sample paths in $\Om = \calD([0,\infty): \calM_F(S))$. A rate $\lambda_0$ Poisson marked point process with mark distribution $\mar$ is a stochastic process $\{N_t\}$ such that (i) for all $0 \le s < t <\infty$ and  $A \in \calB(S)$,
$N_{t}(A)-N_{s}(A)$ is a Poisson random variable with mean $\la_0 (t-s)\mar(A)$; (ii) if for $k \in \N$,
 $0 \le s_i < t_i <\infty$ and  $A_i \in \calB(S)$, $i=1,2, \ldots k$, $\{(s_i, t_i]\times A_i\} \cap \{(s_j, t_j]\times A_j\} = \emptyset$, for all $1\le i < j \le k$, then the Poisson random variables 
$\{N_{t_i}(A_i) - N_{s_i}(A_i), 1 \le i \le k\}$ are mutually independent. We will refer to $\la_0 ds\times \mar(dz)$ as the intensity measure of such a marked Poisson point process.

Let  $\calF = \calB(\Om)$ and abusing notation, let $\{N_t\}_{t\ge 0}$ be the canonical coordinate process on $(\Om, \calF)$.
Fix $\la_0 \in (0,\infty)$ and $\mar \in \calP(S)$. Let $P$ be the unique probability measure on $(\Om, \calF)$ under which $N$ is a rate $\la_0$ marked Poisson process with  mark distribution $\mar$. 
We will consider the canonical filtration on $(\Om, \calF)$ which will be denoted as $\{\calF_t\}_{t\ge 0}$.

In this section we present two types of results: bounds on $r_\al(\calQ\|P)$
for $\calQ$ a family of models (namely probability measures on $(\Om, \calF)$) and on $r_\al(Q\|P)$ for a single model.
This is the content of Sections \ref{sec31} and  \ref{sec32}, respectively.
The proofs of the results stated in these two
sections appear in Appendix \ref{sec-a1} and \ref{sec-a2}, respectively.

\subsection{Bounds on RDR for families of processes}\label{sec31}

This section provides bounds on $r_\al(\calQ\|P)$ for families $\calQ$ of probability laws of counting processes
where as before $P$ is the probability law of a marked Poisson process with intensity measure $\la_0 ds\times \mar(dz)$.

For $x\ge0$ and $\al>1$ let $k_\al(x)$ denote the \renyi divergence
of order $\al$ of a Poisson RV with parameter $x \in (0,\infty)$ w.r.t.\ a Poisson RV with
parameter $1$. A direct computation gives
\begin{equation}\label{094}
k_\al(x)=\frac{x^\al-\al x+\al-1}{\al(\al-1)}.
\end{equation}
Note that for every $\al$, this function is nonnegative, strictly convex
and vanishes uniquely at $1$.

Denote by $\calV_0$ the set of mappings $v: \R_+\to\R_+$ such that $v(x)\to 0$ as $x\to \infty$.
Also, denote by $\PF$ the predictable $\sigma$-field on  $\Om \times \R_+$ and let
$\la: \Om \times \R_+ \times S \to (0,\infty)$ be a $\PF\otimes \calB(S)\backslash \calB(0,\infty)$  measurable map.  We will refer to such a map as a predictable process. 
We will consider probability measures $Q$ on $(\Om, \calF)$ under which $\{N_t\}$ is a marked point process
with intensity process $\la$. Such a probability measure can be characterized as the unique element of $\calP(\Om)$
under which for every bounded predictable process $u$ and $T<\infty$
$$E_P\left[\int_{[0,T]\times S} u(s,z) N(ds\,dz)\right] = E_P\left[\int_{[0,T]\times S} u(s,z) \la(s,z) ds \mar(dz) \right],$$
where we view $N$ as a $\calM_F([0,T]\times S)$-valued random variable which is defined by the relation $N((s,t]\times A)\doteq N_t(A)-N_s(A)$ for $0\le s<t\le T$, $A \in \calB(S)$.
We will be particularly interested in the case where $\la(s,x) = \hat \la(s)\psi(x)$, where
$\hat \la: \Om \times \R_+ \to (0,\infty)$ is a $\PF\backslash \calB(0,\infty)$-measurable map (also referred to as a predictable process) and $\psi: S \to (0,\infty)$ is a $\calB(S)\backslash\calB(0,\infty)$
measurable map satisfying $\int_S \psi(z) \mar(dz)=1$. This corresponds to the setting in which, under $Q$,
$\{N_t\}$ is a marked point process with points having iid distribution $\tilde \mar(dz)= \psi(z)\mar(dz)$.
In such a case, we will refer
to $(N,\la)$ as a marked Cox process.

In the special case where $S$ is a singleton $\{z^*\}$ (and so $\mar(dz)= \tilde \mar(dz) = \delta_{z^*}(dz)$),
$\{N_t\}$ is simply a Cox process with intensity process $\hat \la(\cdot)$ (see e.g. \cite[Section 1.1]{karr}).
In such a case we will occasionally also refer to $(N,\hat\la)$ as a Cox process. 
For a probability measure $Q$ on $(\Om, \calF)$ and $t\in [0, \infty)$, let $Q_t^N \doteq Q \circ N_{[0,t]}^{-1}$ be the probability measure induced on
$\calD([0,t]: \calM_F(S))$ by the canonical coordinate process.
Motivation for the specific forms of the families $\calQ_i$ considered in the theorem appears after the theorem statement.

\begin{theorem}\label{th3}
	$\;$
\begin{enumerate}[(i)]
	\item  Fix $v\in\calV_0$ and $\al>1$. Consider the collection $\calQ_1$
of probability measures $Q$ on $(\Om, \calF)$ under which $(N,\la)$ 
is a marked point process
with intensity process $\la$ satisfying
\begin{equation}\label{095}
T^{-1}\int_0^T \int _S k_\al\left(\frac{\la(t,z)}{\la_0}\right) \mar(dz) dt\le u+v(T),
\qquad T>0,
\end{equation}
for some constant $u\ge 0$. Then
\[
r_\al(\calQ_1\|P)= \limsup_{t\to \infty} \frac{1}{t} \sup_{Q \in \calQ_1} R_{\alpha}(Q_t^N\|P_t^N) =u\la_0.
\]
\item Consider the collection $\calQ_2$ of probability measures $Q$ on $(\Om, \calF)$ under which
$(N,\la)$ is a  marked point process
with intensity process $\la$ satisfying
\begin{equation}\label{093}
a\le\frac{\la(\cdot)}{\la_0}\le b,
\end{equation}
for constants $0\le a\le 1\le b<\iy$. Then
\begin{equation}\label{201}
r_\al(\calQ_2\|P)=(k_\al(a)\vee k_\al(b))\la_0.
\end{equation}
As a special case, the identity holds for the family of measures
under which  the marks are iid (and independent of jump instants) with distribution $\psi(z)\mar(dz)$ and $\{N_t(S)\}$ is a delayed renewal processes with hazard rate $h$
(i.e., $(N, h(s)\psi(z))$ is a marked Cox process), and
 $a\le\frac{h(s)\psi(z)}{\la_0}\le b$ for all $(s,z)\in \R_+\times S$.
\\
\item  Let $v\in\calV_0$.
Consider the collection $\calQ_3$ of probability measures $Q$
under which $(N,\la)$  is a  marked point process
with intensity process $\la$ satisfying
\eqref{093} as well as
\begin{equation}\label{096}
\la_0-v(T)\le \frac{1}{T}\int_{[0,T]\times S}\la(t,z)\mar(dz) dt\le \la_0+v(T),\qquad T>0.
\end{equation}
Then
\[
r_\al(\calQ_3\|P)
=(pk_\al(a)+qk_\al(b))\la_0,
\]
where $p=\frac{b-1}{b-a}$ and $q=\frac{1-a}{b-a}$.
\\
\item
Let $v\in\calV_0$ and consider a collection $\calQ_4$ of probability measures under which
$(N,\la)$ is a  marked point process
with intensity process $\la$ satisfying \eqref{095} for some $\al=\al_0$,
as well as \eqref{096}.
Then for all $\al\in(1,\al_0)$,
\[
r_\al(\calQ_4\|P)=\frac{\la_0}{\bar\al}
\Big[\Big(\bar\al_0u+1\Big)^{\frac{\al-1}{\al_0-1}}-1\Big]
=(pk_\al(0)+qk_\al(c))\la_0,
\]
where $\bar\al=\al(\al-1)$, $\bar\al_0=\al_0(\al_0-1)$,
$p=1-q$, $q=(\bar\al_0u+1)^{-\frac{1}{\al_0-1}}$ and $c=(\bar\al_0u+1)^{\frac{1}{\al_0-1}}$.
\end{enumerate}
\end{theorem}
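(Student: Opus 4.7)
\emph{Master identity.} My plan is to reduce the four statements to a single pathwise optimization. Starting from the standard Radon--Nikodym derivative for the change of intensity from $\la_0\mar$ to $\la$, a direct manipulation gives
\[
\Big(\frac{dQ_t^N}{dP_t^N}\Big)^\al \;=\; Z_t\cdot\exp\bigl(\al(\al-1)\Psi_t(\la)\bigr),\qquad \Psi_t(\la):=\la_0\int_0^t\!\!\int_S k_\al\!\left(\frac{\la(s,z)}{\la_0}\right)\mar(dz)\,ds,
\]
where $Z_t$ is the nonnegative $P$-exponential local martingale associated with the predictable integrand $\al\log(\la/\la_0)$. Since $Z$ is a nonnegative local martingale $E_P[Z_t]\le 1$, with equality whenever $Z$ is a true $P$-martingale (e.g.\ for deterministic or uniformly bounded $\la/\la_0$). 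Taking $E_P$ and using the definition of $R_\al$ yields the master inequality
\[
R_\al(Q_t^N\|P_t^N)\;\le\;\esssup\Psi_t(\la),
\]
an equality when $\la$ is deterministic. This reduces every statement to a deterministic optimization of $\Psi_t(\la)/t$ over the constraints defining $\calQ_i$, together with the construction of a saturating deterministic $\la^\ast$ for the matching lower bound.

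\emph{Upper bounds.} Part (i) is then immediate from \eqref{095} (after multiplying by $\la_0$). Part (ii) follows from convexity of $k_\al$ on $[a,b]$, which is maximised at an endpoint. For part (iii) I apply the pointwise convexity inequality
\[
k_\al(x)\;\le\;\tfrac{b-x}{b-a}\,k_\al(a)+\tfrac{x-a}{b-a}\,k_\al(b),\qquad x\in[a,b],
\]
at $x=\la(s,z)/\la_0$ and integrate against $\mar(dz)\,ds/T$; the mean constraint \eqref{096} then produces the weights $p=(b-1)/(b-a)$ and $q=(1-a)/(b-a)$. For part (iv) I construct a Lagrangian-type linear majorant, i.e.\ select multipliers $\mu,\nu\ge 0$ so that
\[
k_\al(x)\;\le\;(pk_\al(0)+qk_\al(c))+\mu(x-1)+\nu\bigl(k_{\al_0}(x)-u\bigr),\qquad x\ge 0,
\]
with equality at $x=0$ and $x=c$. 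Matching values and slopes at these two contact points forces $c^{\al_0-1}=\bar\al_0 u+1$ together with the stated $p,q$; averaging the inequality over $[0,T]\times S$ and using \eqref{095} (with $\al_0$) and \eqref{096} yields the upper bound.

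\emph{Lower bounds.} For the matching lower bounds I exhibit in each part an explicit $Q^\ast\in\calQ_i$ with deterministic intensity $\la^\ast$, for which $Z_t$ is a true $P$-martingale so that the master identity is an equality. In (i) and (ii), $\la^\ast$ is a constant multiple of $\la_0$, chosen so that $k_\al(\la^\ast/\la_0)$ hits $u$ or the dominant endpoint value. In (iii) and (iv), $\la^\ast(s)$ is piecewise constant in $s$, alternating between $\{\la_0 a,\la_0 b\}$ respectively $\{0,\la_0 c\}$ on time intervals whose relative lengths converge to $p$ and $q$ as $T\to\infty$; the slack $v(T)$ absorbs $O(1/T)$ end effects so that $\la^\ast$ satisfies the defining constraints, and $\Psi_T(\la^\ast)/T$ converges to the claimed limits $\la_0(pk_\al(a)+qk_\al(b))$ and $\la_0(pk_\al(0)+qk_\al(c))$.

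\emph{Main obstacle.} The most delicate point is justifying $E_P[Z_t]\le 1$ in part (i), where $\la/\la_0$ is not assumed uniformly bounded; one must localize carefully and use the integrability built into \eqref{095}. In (ii)--(iv) the uniform bound $\la\le b\la_0$ makes the true-martingale property routine. A second technical issue, specific to part (iv), is verifying that the prescribed Lagrangian majorant is globally valid on $[0,\infty)$ and not merely at the two contact points; this reduces to a sign analysis of $k_\al(x)-\mu x-\nu k_{\al_0}(x)$ given the matches at $x=0$ and $x=c$, exploiting that $\al<\al_0$ forces $\nu k_{\al_0}$ to dominate $k_\al$ at infinity.
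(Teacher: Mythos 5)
Your master identity and the treatment of parts (i)--(iii) coincide with the paper's proof: the same factorization of $\La_T^\alpha$ into a $P$-local martingale times a deterministic-looking exponential of $k_\alpha$, the same use of the supermartingale bound $E_P[Z_t]\le 1$, the same convexity arguments (endpoint bound for (ii), chord bound for (iii)), and the same piecewise-constant deterministic intensities for the matching lower bounds. Your worry in the ``main obstacle'' paragraph about $E_P[Z_t]\le 1$ in part (i) is unnecessary: a nonnegative local martingale started at $1$ is automatically a supermartingale, and no further integrability from \eqref{095} is required for the upper bound.

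Part (iv) is where you genuinely diverge from the paper. The paper reduces the pathwise bound to a moment problem for the empirical measure $\mu$ on $[0,\infty)$ (fix $m_1(\mu)$ and $m_{\alpha_0}(\mu)$, maximize $m_\alpha(\mu)$), and solves it with H\"older's inequality, which directly identifies the extremal two-point measure $p\delta_0+q\delta_c$ and gives the sharp constant with no need to verify a global inequality. Your Lagrangian-majorant route is the LP-dual formulation of the same moment problem. It is correct in spirit, but it is not complete as written, and in two places your description of it is slightly off.

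First, the majorant does exist with the right structure, but not by ``matching values and slopes at both contact points.'' At $x=0$ one only has the boundary conditions $g(0)=0$ and $g'(0^+)\le 0$ for the difference $g(x)=k_\alpha(x)-\lambda_0-\lambda_1 x-\nu k_{\alpha_0}(x)$; matching the slope there as well gives four equations for two unknowns. Solving $g(c)=g'(c)=0$ with $c=(\bar\alpha_0 u+1)^{1/(\alpha_0-1)}$ yields $\nu=\frac{\alpha_0}{\alpha}c^{\alpha-\alpha_0}>0$ and then $g'(0^+)=c^{\alpha-1}\frac{\alpha-\alpha_0}{(\alpha-1)\alpha(\alpha_0-1)}<0$, which is the correct boundary condition. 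Second, $\mu=\lambda_1$ need not be nonnegative (a concrete counterexample: $\alpha=2$, $\alpha_0=3$, $c=2$ gives $\lambda_1<0$); this does no harm because the mean constraint \eqref{096} is two-sided, so only $\nu\ge0$ matters when you integrate the majorant, but your claim $\mu,\nu\ge0$ is overstated. Third and most importantly, you explicitly leave open the global validity of the majorant. This is the crux of the argument and must be proved: using $g''(x)=x^{\alpha-2}\bigl(1-\tfrac{\alpha_0}{\alpha}(x/c)^{\alpha_0-\alpha}\bigr)$, one sees that $g$ is convex on $(0,x^\ast)$ and concave on $(x^\ast,\infty)$ with $x^\ast=c(\alpha/\alpha_0)^{1/(\alpha_0-\alpha)}<c$. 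On the concave part $g'(c)=0$ forces $c$ to be a global maximum with $g(c)=0$, so $g\le 0$ there and $g(x^\ast)<0$; on the convex part $g(0)=0$ and $g(x^\ast)<0$ give $g\le 0$ by the chord bound. Spelling this out closes the gap, but as submitted the proof of part (iv) is incomplete; the paper's H\"older argument sidesteps all of it.
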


The proof appears in \S \ref{sec-a1}.
Figure \ref{fig1} provides several numerical evaluations of the RDR for families analyzed in Theorem \ref{th3}.

\begin{remark}
Items (i) and (ii)
of the result are concerned with classes of marked point processes
for which a certain constraint is put on the size of perturbation of the stochastic intensity.
Note that for $\al=2$, the left hand side of \eqref{095} gives half the second moment
centered about 1 of the empirical distribution 
$$\frac{1}{T} \int_{[0,T]\times S} \delta_{\la(t,z)/\la_0} \mar(dz) dt.$$
For other values of $\al$, it provides different types of  level dispersion about 1 that take the form of higher order moments.
In the same vein, \eqref{093} can be seen as a constraint on the $L_\iy$ norm of
of the same empirical distribution, centered about $\frac{a+b}{2}$.

The motivation behind parts (iii) and (iv) is that the reference value $\la_0$
may play an additional role. If a parameter is regarded as a first order approximation,
it may often mean that over a long period of time it represents the true average.
Clearly, this additional constraint makes the class of models smaller than the classes
from items (i) and (ii), and leads to tighter bounds.
\end{remark}

\begin{figure}
\center{
\includegraphics[width=0.48\textwidth,height=0.4\textwidth]{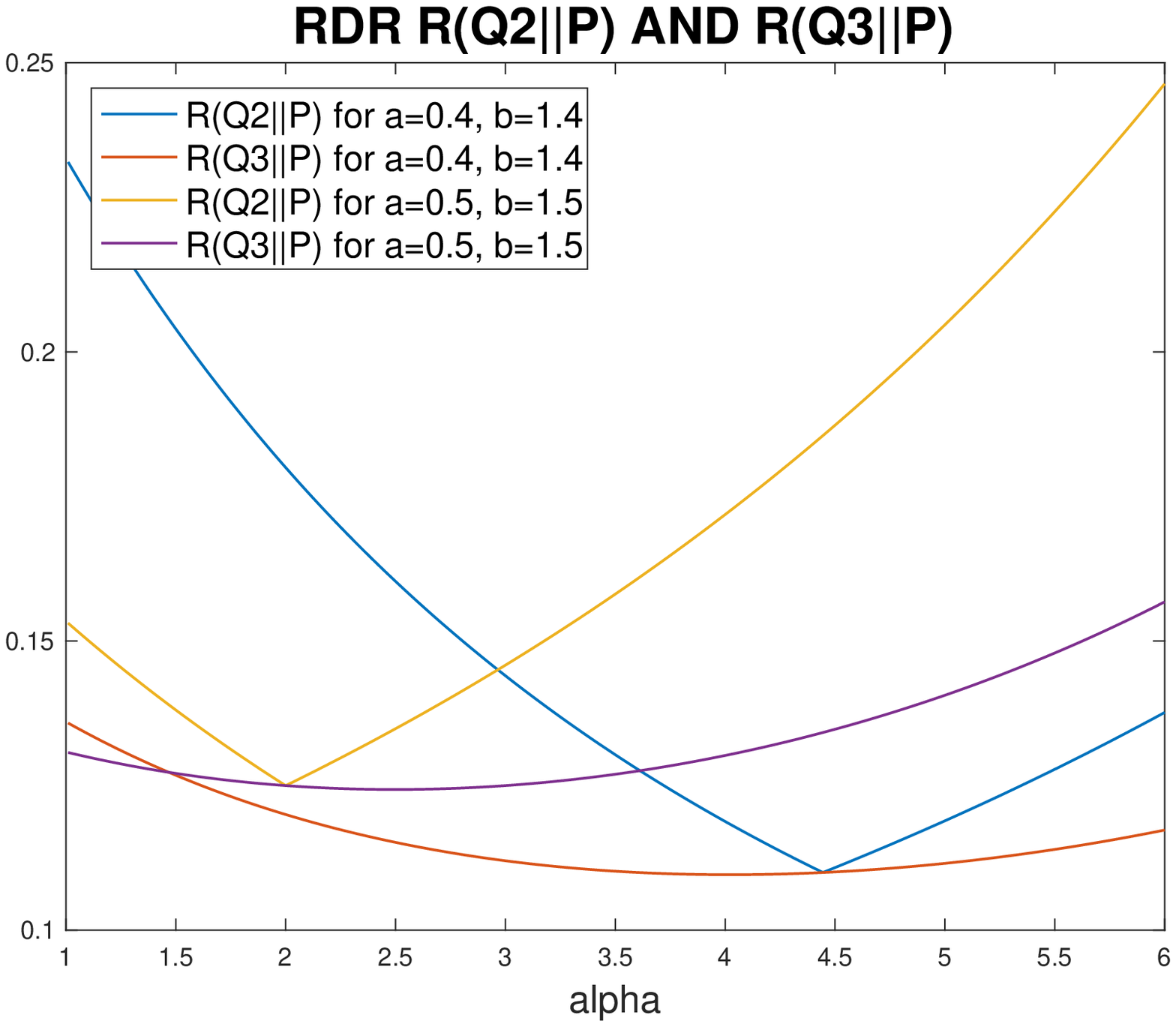}
\includegraphics[width=0.48\textwidth,height=0.4\textwidth]{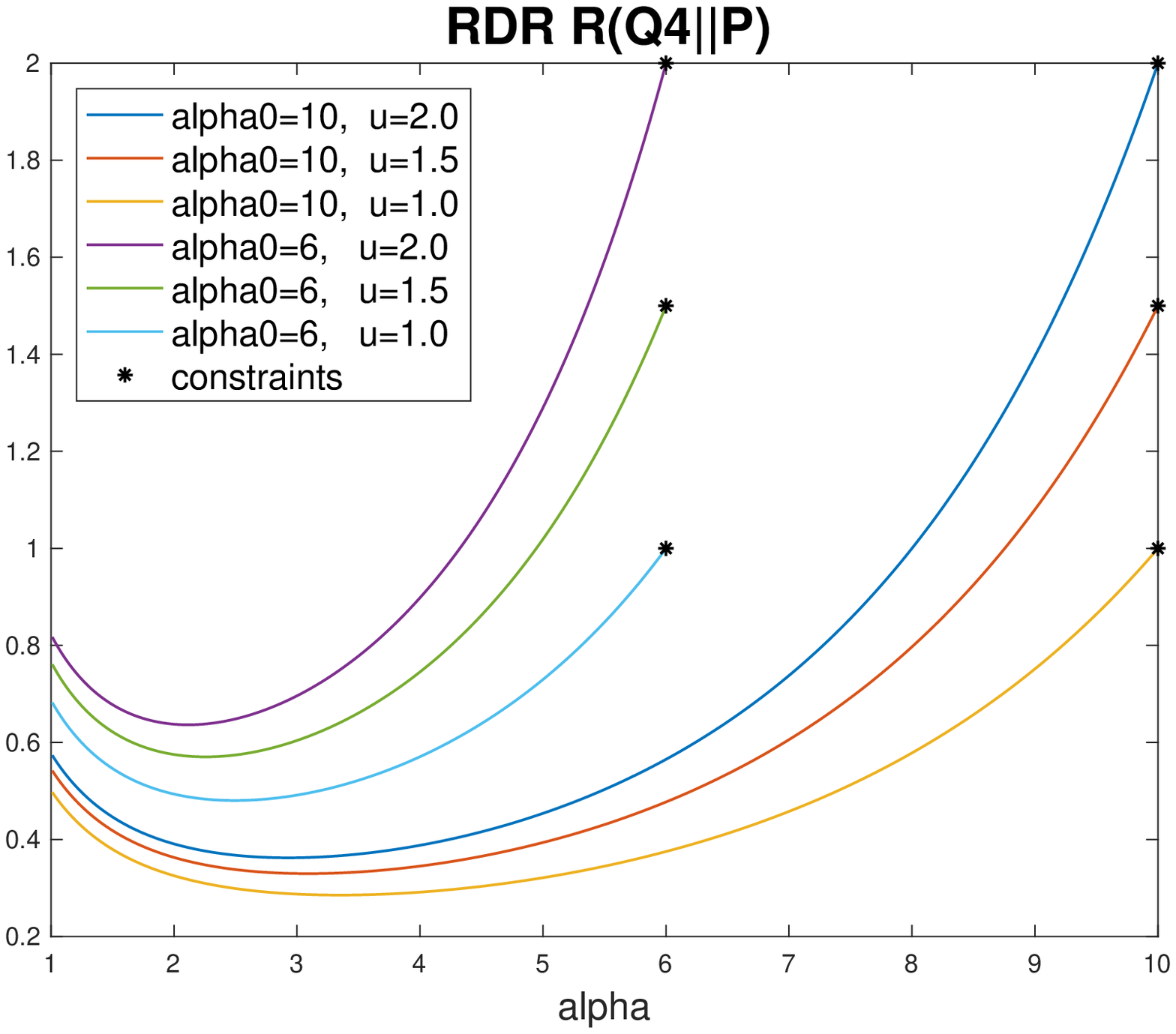}
}
\caption{RDR for various families as a function of $\al$. Left: $r_\al(\calQ_2\|P)$
and $r_\al(\calQ_3\|P)$ for different values of $a$ and $b$.
Right: $r_\al(\calQ_4\|P)$ for different constraint pairs $(\al_0,u)$.
In all cases, $\la_0=1$.}
\label{fig1}
\end{figure}

\subsection{Bounds on RDR for a single renewal process}\label{sec32}

Recall that $P$ is the unique probability measure on $(\Om, \calF)$ under which the canonical coordinate process $N$ is a marked Poisson process
with rate $\la_0$ and mark distribution $\mar$. In this section, for simplicity, we take $\la_0=1$.
Let  $Q$ be another probability measure on $(\Om, \calF)$ under which $N$ is a marked renewal process with mark distribution $\tilde \mar(dz) = \psi(z)\mar(dz)$
and inter-jump distribution $\pi$. Note that in such a process the collection of jump-instants is independent of the collection of marks, and inter-jump times and marks are iid. We denote such a process as a $(\pi, \psi)$-marked renewal process.
Assume that $\pi$ has a density denoted by $g$, and let $h$ denote the hazard
rate, $h(x)\doteq g(x)/\pi\lbrack x,\infty)$, with $h(x)=0$ if $\pi[x,\iy)=0$. Define, for $x\ge0$,
\begin{equation}
	\label{eq:H}
	H(x)\doteq\int_{0}^{x}(1-h(s))ds+\log h(x)=x+\log g(x),	
\end{equation}
with $H(x)=-\iy$ when $g(x)=0$.

To state the next result, let
\begin{align*}
	\gamma(s) & \doteq \int e^{sH(y)} \nu(dy), \quad s \in \Rmb, \\
	\hat\gamma(p,q,\al) & \doteq \frac{\gamma(q\al)^{p/q}-1}{p}, \quad p,q \ge 1,
\end{align*}
where $\nu$ is the standard exponential distribution.
Denote 
\begin{equation*}
	G_\alpha^{(1)} \doteq \inf_{p,q \ge 1:p^{-1}+q^{-1}=1}\hat\gamma(p,q,\al).
\end{equation*}
Also, let
\[
\beta(\la)\doteq\log \int e^{\la_1y+\la_2H(y)} \nu(dy),\qquad \la=(\la_1,\la_2)\in\R^2,
\]
and let $\beta^*$ be the Legendre-Fenchel transform:
\[
\beta^*(x)=\sup_{\la}\{\lan\la,x\ran-\beta(\la)\},\qquad x\in\R^2.
\]
For $\theta\in(0,\iy)$, denote
\begin{align*}
	G_\al^{(2)}(\theta) & \doteq \theta \sup_{x\in\R^2:x_1\le\theta^{-1}} \left[ \alpha x_2 - \beta^*(x) \right], \\
	G_\al^{(3)}(\theta) & \doteq \theta \sup_{x_2\in\R} \left[ \alpha x_2 - \beta^*(\theta^{-1},x_2) \right].
\end{align*}
Recall that
$r_\al^N(Q\|P)= \limsup_{t\to \infty} \frac{1}{t}  R_{\alpha}(Q_t^N\|P_t^N)$. For $z \in \R$, we denote  $z^+=0\vee z$. 
Then we have the following upper bounds.
\begin{theorem}\label{thm1}
	Assume that $\bar H\doteq\sup_{x\in \R_+} H(x)<\iy$. Also suppose that
	$c(\al) \doteq \int_S (\psi^{\al}(z)-1)\mar(dz)<\infty$.
	Then the following hold for $\alpha > 1$.
	\begin{itemize}
	\item[(a)]
		\begin{equation}\label{eq:rough_bd}
		r_\al^N(Q\|P)\le\frac{e^{\al\bar H}-1 + c(\al)}{\al(\al-1)}.
		\end{equation}
	\item[(b)]
		\begin{equation}\label{eq:renew_1}
			r_\al^N(Q\|P)
			\le\frac{[G_\al^{(1)}]^++ c(\al)}{\al(\al-1)}.
		\end{equation}
	\item[(c)]
		If $\beta$ is finite in a neighborhood of the origin, then
		\begin{equation}\label{eq:renew_2}
		r_\al^N(Q\|P)
		\le\frac{[\sup_{\theta\in(0,\iy)}G_\al^{(2)}(\theta)]^+ + c(\al)}{\al(\al-1)}.
		\end{equation}	
	\item[(d)]
		If $\beta$ is finite in a neighborhood of the origin and $\gamma(s) < \infty$ for all $s \le 0$, then
		\begin{equation}\label{eq:renew_3}
		r_\al^N(Q\|P)
		\le\frac{[\sup_{\theta\in(0,\iy)}G_\al^{(3)}(\theta)]^+ + c(\al)}{\al(\al-1)}.
		\end{equation}	
	\end{itemize}
\end{theorem}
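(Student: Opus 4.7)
The strategy is to reduce $R_\alpha(Q_t^N\|P_t^N) = (\alpha(\alpha-1))^{-1}\log E_P[L_t^\alpha]$ to a renewal-theoretic computation, and then bound the resulting Laplace-type integral in four progressively sharper ways.

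First I would compute $L_t = dQ_t^N/dP_t^N$. Either from the joint density of $(N_t(S), T_1,\ldots,T_{N_t(S)}, Z_1,\ldots,Z_{N_t(S)})$ on $\{N_t(S)=n\}$, or via the marked-Poisson Girsanov formula applied to the $Q$-intensity $h(s - \text{last jump before }s)\,\psi(z)$, this gives
\[ \log L_t = \sum_{i\le N_t(S)} H(\tau_i) + \sum_{i\le N_t(S)}\log\psi(Z_i) + \int_0^{t - T_{N_t(S)}}(1-h(u))\,du, \]
with $\tau_i = T_i - T_{i-1}$, $T_0 = 0$. Since under $P$ the marks are i.i.d.\ $\varsigma$ independently of jump times, raising to the $\alpha$th power and integrating out marks contributes $(1+c(\alpha))^{N_t(S)}$. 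The assumption $\bar H<\infty$ gives $g(y) = e^{H(y)-y}\le e^{\bar H - y}$, hence $\bar G(x)\le e^{\bar H-x}$ and the ``tail factor'' $\exp\bigl(\alpha\int_0^x(1-h)\,du\bigr) = (e^x\bar G(x))^\alpha \le e^{\alpha\bar H}$ uniformly in $x$.

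What remains is a renewal functional: with $F(s) = (1+c(\alpha))e^{\alpha H(s)}$ and $\mu(ds) = F(s)\,ds$,
\[ E_P[L_t^\alpha] \le e^{\alpha\bar H}\,e^{-t}\,U(t),\qquad U(t) = \sum_{n\ge 0}\mu^{*n}([0,t]). \]
For any $\theta$ with $\hat\mu(\theta) := \int_0^\infty e^{-\theta s}F(s)\,ds < 1$, an exponential-Markov bound gives $\mu^{*n}([0,t]) \le e^{\theta t}\hat\mu(\theta)^n$, hence $U(t)\le e^{\theta t}/(1-\hat\mu(\theta))$ and $r_\alpha^N(Q\|P) \le (\theta-1)/[\alpha(\alpha-1)]$. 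The four bounds follow from progressively sharper convex estimates of $\hat\mu(\theta)$, via the identity $\int e^{\alpha H(s)-\theta s}\,ds = E_\nu[e^{\alpha H(Y)+(1-\theta)Y}]$. Specifically, (a) uses $H(Y)\le \bar H$; (b) applies H\"older in the $\nu$-integral with conjugate exponents $(p,q)$, producing $\gamma(q\alpha)^{1/q}$ and, after infimizing over $(p,q)$, the quantity $G_\alpha^{(1)}$; (c) uses the Fenchel duality $\beta(\lambda) = \sup_x[\langle\lambda,x\rangle - \beta^*(x)]$ at $\lambda = (1-\theta,\alpha)$ to convert $\hat\mu(\theta)\le 1$ into maximization of $\alpha x_2-\beta^*(x)$ over the half-space $x_1\le 1/\theta$, giving $G_\alpha^{(2)}(\theta)$; (d) uses the extra condition $\gamma(s)<\infty$ for $s\le 0$ to show the max is attained on the hyperplane $x_1 = 1/\theta$, giving $G_\alpha^{(3)}(\theta)$. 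The outer $\sup_\theta$ and the positive parts $[\cdot]^+$ are then taken.

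The main obstacle is the precise decoupling of jump and mark contributions. A direct Markov-type bound on $U$ entangles the marks into a multiplicative $(1+c(\alpha))$ factor in the jump-time optimization, whereas the theorem requires the additive term $c(\alpha)/[\alpha(\alpha-1)]$ sitting outside the $[\cdot]^+$. Obtaining this cleanly requires either a H\"older-type splitting of $(L_t^{\mathrm{jumps}})^\alpha$ from the factor $(1+c(\alpha))^{N_t(S)}$, or an auxiliary change of reference measure from rate-$1$ to rate-$(1+c(\alpha))$ Poisson so that the marks contribution becomes a pure rate-change and detaches from the jump-time Malthusian analysis. A secondary technical concern is that $F(s)\,ds$ may be an infinite measure, so the argument must rely only on finiteness of $\hat\mu(\theta)$ at the particular $\theta$ values produced by the H\"older/Fenchel optimization, rather than on the existence of a classical Malthusian root.
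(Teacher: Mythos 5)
Your renewal-measure strategy --- bound the residual via $e^x\bar G(x)\le e^{\bar H}$, reduce to $U(t)=\sum_n\mu^{*n}([0,t])$, and apply an exponential Markov bound via $\hat\mu(\theta)$ --- is a genuinely different route from the paper's. The paper stops the exponential martingale at $\eta_t$, the first jump after $t$, so that the residual term vanishes identically, and bounds $R_\alpha(Q_t^N\|P_t^N)$ by $\frac{1}{\alpha(\alpha-1)}\log E_P[\Lambda_{\eta_t}^\alpha]$ via the data processing inequality (this is the paper's Lemma A.1); it then decomposes $E_P[e^{\alpha\sum H(\Delta_i)}]$ over the value $k=N_t$, handles the tails $k\le c_0 t$ and $k\ge c_1 t$ by crude Stirling estimates, and the middle range by Cram\'{e}r's theorem for $(\bar S_n,\bar S_n^H)$ and Varadhan's lemma. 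Your Laplace-transform/Markov bound on the renewal measure is more classical and avoids the tiling and the Cram\'{e}r/Varadhan machinery; both routes use $\bar H<\infty$ in an essential way. That the H\"{o}lder and Fenchel estimates of $\hat\mu$ reproduce $G_\alpha^{(1)}$, $G_\alpha^{(2)}$, $G_\alpha^{(3)}$ exactly is plausible but would need to be verified in detail.

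The obstacle you flag about the marks is not a deficiency of your method --- it points to what appears to be an error in the paper. Lemma A.1 asserts that $\frac{1}{t}\log E_P[\Lambda_{\eta_t}^\alpha]$ splits additively into a jump-time term plus $c(\alpha)$, ``since marks are independent of jump instants,'' but the mark factor becomes $(1+c(\alpha))^{N_t+1}$ upon conditioning, and $N_t$ is a function of the $\Delta_i$: the factor $e^{\alpha\sum_{i\le N_t+1}H(\Delta_i)}$ and $(1+c(\alpha))^{N_t+1}$ are correlated, and the expectation does not factor. Indeed, for $\pi=\mathrm{Exp}(\rho)$ with $\rho>1$ and a nontrivial mark density $\psi$, one computes directly from \eqref{eq:eq822} that $\Lambda_t=e^{-(\rho-1)t}\rho^{N_t}\prod_i\psi(\xi_i)$, hence $r_\alpha^N(Q\|P)=\frac{\rho^\alpha(1+c(\alpha))-1-\alpha(\rho-1)}{\alpha(\alpha-1)}$ exactly, which exceeds the claimed bound $\frac{\rho^\alpha-1+c(\alpha)}{\alpha(\alpha-1)}$ whenever $c(\alpha)(\rho^\alpha-1)>\alpha(\rho-1)$ (e.g.\ $\alpha=2$, $\rho=2$, $c(2)>2/3$), despite $\bar H=\log\rho<\infty$ and $c(\alpha)<\infty$. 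So your multiplicative $(1+c(\alpha))$ --- carried either through $\hat\mu(\theta)=(1+c(\alpha))E_\nu[e^{\alpha H(Y)+(1-\theta)Y}]$ or via the rate-$(1+c(\alpha))$ reference change you propose --- is the correct form. Do not try to force the additive $c(\alpha)$; your instinct that it is out of reach is right.
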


The proof of this result appears in \S \ref{sec-a2}.

%
%
%

\skp

\begin{remark}
	We now make some comments on the assumption and behavior of different bounds in Theorem \ref{thm1}.
	\begin{itemize}
	\item
		For \eqref{eq:renew_2} and \eqref{eq:renew_3}, the assumption that $\beta$ is finite in a neighborhood of the origin is needed to apply the strengthened Cram\'{e}r's theorem \cite[Corollary 6.1.6]{demzei} and Varadhan's integral lemma \cite[Lemma 4.3.6]{demzei}.
		Unfortunately, if the support of $\pi$ is not $\Rmb_+$, then $H$ is $-\infty$ at some place and $\beta$ is not always finite around the origin.

	\item
		For \eqref{eq:renew_3}, the assumption that $\gamma(s) < \infty$ for all $s \le 0$ (together with the requirement that $\bar H <\infty$) rules out the possibility that $\pi$ is Exponential$(\rho)$ for $\rho \ne 1$.
	\item
	Since $G_\al^{(3)}(\theta) \le G_\al^{(2)}(\theta)$,	the bound in \eqref{eq:renew_3} is clearly better than the bound in \eqref{eq:renew_2} (though the former requires stronger
		assumptions).
Also, the bounds in \eqref{eq:renew_1}, \eqref{eq:renew_2} and \eqref{eq:renew_3} are all better than the rough bound \eqref{eq:rough_bd}. This can be seen as follows.
		
		For \eqref{eq:renew_1}, since $\gamma(s) \le e^{s\Hbar}$, we have $\hat\gamma(p,q,\alpha) \le \frac{e^{p\alpha\Hbar}-1}{p}$.
		Taking $p \to 1$ gives $G_\alpha^{(1)} \le e^{\alpha\Hbar}-1$.
		
		For \eqref{eq:renew_2}, note that for fixed $\theta \in (0,\infty)$,
		\begin{align}
			G_\alpha^{(2)}(\theta) & = \theta \sup_{x\in\R^2:0 \le x_1\le\theta^{-1}} \left[ \alpha x_2 - \beta^*(x) \right] \notag \\
			& = \theta \sup_{x\in\R^2:0 \le x_1\le\theta^{-1}} \left[ \alpha x_2 - \sup_{\lambda \in \R^2} \left\{ \lan \lambda,  x\ran - \beta(\lambda) \right\} \right] \notag \\
			& = \theta \sup_{x\in\R^2:0 \le x_1\le\theta^{-1}} \inf_{\lambda \in \R^2 } \left[ \alpha x_2 -  \lambda_1 x_1 - \lambda_2 x_2 + \beta(\lambda) \right] \notag \\
			& \le \theta \inf_{\lambda \in \R^2} \sup_{x\in\R^2:0 \le x_1\le\theta^{-1}} \left[ \alpha x_2 -  \lambda_1 x_1 - \lambda_2 x_2 + \beta(\lambda) \right] \notag \\
			& = \theta \inf_{\lambda_1 \in \Rmb} \sup_{0 \le x_1\le\theta^{-1}} \left[ - \lambda_1 x_1 + \beta(\lambda_1,\alpha) \right], \notag\\
			&= \inf_{\lambda_1 \in \Rmb} \left[ (\lambda_1)^{-}  + \theta\beta(\lambda_1,\alpha) \right]
			 \label{eq:rmk_1}
		\end{align}
		where the fifth line follows on observing that $\sup_{x_2 \in \Rmb} \left[ \alpha x_2 -  \lambda_1 x_1 - \lambda_2 x_2 + \beta(\lambda) \right] = \infty$ when $\lambda_2 \ne \alpha$.
		If $0 < \theta < 1$, taking $\lambda_1=0$ in \eqref{eq:rmk_1} we have
		\begin{equation*}
			G_\alpha^{(2)}(\theta) \le \theta \beta(0,\alpha) \le \alpha \Hbar \le e^{\alpha\Hbar}-1,
		\end{equation*}
		If $\theta \ge 1$, taking $\lambda_1 = 1-\theta \le 0$ in \eqref{eq:rmk_1} we have
		\begin{align*}
			G_\alpha^{(2)}(\theta) & \le -\lambda_1 + \theta \beta(\lambda_1,\alpha) = \theta-1 + \theta \log E_P[e^{(1-\theta) \Del + \alpha H(\Del)}] \\
			& \le \theta-1 + \alpha \Hbar \theta - \theta \log \theta \le e^{\alpha\Hbar}-1,
		\end{align*}
		where the last inequality becomes equality when $\theta = e^{\alpha \Hbar}$.
		Therefore $\sup_{\theta \in (0,\infty)} G_\alpha^{(2)}(\theta) \le e^{\alpha\Hbar}-1$ and \eqref{eq:renew_2} is better than \eqref{eq:rough_bd}.
		Finally, since  \eqref{eq:renew_3} is better than \eqref{eq:renew_2}, it is also better than \eqref{eq:rough_bd}.	
	\end{itemize} 
\end{remark}

\subsection{Examples}\label{sec:exam}
We now consider a few specific cases of $\pi$. For simplicity these examples are concerned with
point processes without marks (namely the case where $\mar(dz)= \delta_{z^*}(dz)$ and hence $c(\alpha)=0$ in Theorem \ref{thm1}).
The first example is that of an exponential distribution.

\begin{example}
\label{exa:exp}
	Suppose $\pi=$ Exp$(\rho)$ with rate $\rho>1$, namely $g(x) = \rho e^{-\rho x}$.
	It turns out that in this case the right sides of \eqref{eq:renew_2} and \eqref{eq:renew_3} are the same and in fact the inequalities in both cases can be replaced by equalities.  Note that in this example $\gamma(s)=\infty$ for $s \le -\frac{1}{\rho-1}$, which violates the assumption required for \eqref{eq:renew_3} in part (d).  Actually in \eqref{eq:renew_2} and \eqref{eq:renew_3} the inequality can be changed to equality even for the case  $\rho \in (0,1]$.  However, we note that $H(x) = -(\rho-1)x + \log \rho \to \infty$ as $x \to \infty$ when $\rho \in (0,1]$, which violates the assumption $\Hbar<\infty$ required for Theorem \ref{thm1}.
	Proofs of the above statements are given in Appendix \ref{sec-a3}
	This example shows that the conditions assumed in Theorem \ref{thm1} are  not essential for the result.
	\qed	
\end{example}

The second example is Gamma$(k,\rho)$.

\begin{example}\label{ex:32}
	Suppose $\pi=$ Gamma$(k,\rho)$ with $k \ge 1$ and $\rho>1$, namely $g(x) = \frac{\rho^k}{\Gam(k)} x^{k-1} e^{-\rho x}$, for $x\ge 0$.
	For this example, computing an explicit expression for the \renyi divergence is harder and thus we will make use of the bounds in Theorem \ref{thm1}.
	Since $\rho>1$ and $k \ge 1$, $e^{H(x)} = g(x)e^x$ is bounded from above.
	Also for $\la=(\lambda_1,\lambda_2)$ in a sufficiently small neighborhood of  the origin,
	\begin{align*}
		\beta(\la) & = \log \int e^{\lambda_1 x} g(x)^{\lambda_2} e^{\lambda_2 x} e^{-x} \, dx \\
		& = \log \int \left( \frac{\rho^k}{\Gam(k)} \right)^{\lambda_2} x^{\lambda_2(k-1)} e^{-(\lambda_2\rho-\lambda_2-\lambda_1+1) x} \,dx \\
		& = \log \left[ \left( \frac{\rho^k}{\Gam(k)} \right)^{\lambda_2} \frac{\Gam(1+\lambda_2(k-1))}{(1+\lambda_2(\rho-1)-\lambda_1)^{1+\lambda_2(k-1)}} \right] < \infty.
	\end{align*}
	So all assumptions for \eqref{eq:renew_2} hold. Note however that assumptions for \eqref{eq:renew_3} are not satisfied since $\gamma(s)=\infty$ for 
	$s \le -(\rho-1)^{-1}$.
	Using Theorem \ref{thm1}(c) one can give the following 
	explicit bound for the \renyi divergence rate by estimating
$\sup_{\theta \in (0,\infty)} G_\alpha^{(2)}(\theta)$. 
	\begin{equation} \label{eq:bdforgam} r_\al^N(Q\|P) \le \frac{1}{\alpha(\alpha-1)} \sup_{\theta \in (0,\infty)} G_\alpha^{(2)}(\theta) \le \frac{1}{\alpha(\alpha-1)} \left[ \left( \frac{\Gam(1+\alpha(k-1))}{(\Gam(k))^\alpha} \rho^{\alpha k} \right)^{\frac{1}{1+\alpha(k-1)}} -\alpha(\rho-1) - 1 \right].\end{equation}
	Details of this calculation are given in Appendix \ref{sec-a3}.
	When $k=1$, namely when $\pi$ is Exp$(\rho)$, the   bound on the right side equals
	$\rho^\alpha -\alpha(\rho-1) - 1$ and the inequalities in the above display are in fact equalities.	
	\qed
\end{example}

The next example can be used to obtain RDR bounds for certain types of phase-type distributions. 

\begin{example}
	\label{lem:convert_to_exp}
	Suppose the density $g(x) \le Ce^{-\sigma x}$ for some $\sigma > 1$.
	In this case, once again, the assumptions for \eqref{eq:renew_3} are not satisfied in general. However as we check below the assumptions for
	\eqref{eq:renew_2} hold.
	First note that $e^{H(x)} = g(x)e^x \le C e^{-(\sigma-1)x}$ and since $\sigma >1$, $\bar H <\infty$.
	
	Next note that for $\la=(\lambda_1,\lambda_2)$ such that $1+\lambda_2(\sigma-1)-\lambda_1 > 0$, we have
	\begin{align*}
		\beta(\la) & = \log \int e^{\lambda_1 x} g(x)^{\lambda_2} e^{\lambda_2 x} e^{-x} \, dx \\
		& \le \log \int C^{\lambda_2} e^{-(\lambda_2\sigma-\lambda_2-\lambda_1+1) x} \, dx \\
		& = \lambda_2 \log C + \log  \frac{1}{1+\lambda_2(\sigma-1)-\lambda_1} < \infty.
	\end{align*}
	Thus we have verified that all the asuumptions needed for \eqref{eq:renew_2} are satisfied. 
	Using Theorem \ref{thm1}(c) one can give the following  simple form bound for the quantity
	on the right side of \eqref{eq:renew_2}.
	\begin{equation}\label{eq:bdfphas} r_\al^N(Q\|P) \le\frac{1}{\alpha(\alpha-1)}\sup_{\theta\in(0,\iy)}G_\al^{(2)}(\theta)^+ \le \frac{1}{\alpha(\alpha-1)} \left[ C^\alpha - 1 - \alpha(\sigma-1) \right].\end{equation}
	Details of this calculation are given in Appendix \ref{sec-a3}.
	\qed
\end{example}

\section{Robust control of tail properties for a scheduling problem}
\beginsec
\label{sec4}

When considering ordinary cost structures the variational
representation for exponential integrals in terms of relative entropy is the
starting point for a formulation of optimization and control design that is
robust with respect to model errors, where errors are measured by relative
entropy distances. To be precise, one can formulate problems such that their
solution gives the tightest possible bounds on a given performance measure
for a family of models, where the family is defined by a relative entropy
distance to a design model \cite{petjamdup,petugrsav}. Alternatively, one can
fix a desired performance bound, and find the control which gives the
largest possible family of models across which the performance criteria is
guaranteed to hold. In this section we investigate analogous situations
where in place of ordinary cost structures we use costs that are determined
by rare events, i.e., risk-sensitive costs. Let 
$({\mathcal{S}},\mathcal{F})$ and 
$\mathcal{P}$ be as in Section \ref{sec21}.

\subsection{A general approach}
\label{sec41}
Let $g:\mathcal{S}\rightarrow \mathbb{R}$ be bounded and measurable.
From the identity 
\begin{equation*}
\frac{1}{\alpha }\log \int e^{\alpha g}dP=\sup_{Q\in \mathcal{P}}\left[ 
\frac{1}{\alpha -1}\log \int e^{(\alpha -1)g}dQ-R_{\alpha }(Q\Vert P)\right]
,
\end{equation*}%
valid for $\alpha >1$, one can easily obtain, for all $0<\beta <\gamma $, 
\begin{equation}
\frac{1}{\gamma }\log \int e^{\gamma g}dP=\sup_{Q\in \mathcal{P}}\left[ 
\frac{1}{\beta }\log \int e^{\beta g}dQ-\frac{1}{\gamma -\beta }R_{\frac{%
\gamma }{\gamma -\beta }}(Q\Vert P)\right]   \label{305}
\end{equation}%
(extensions to unbounded $g$ are also possible). Fix some risk sensitivity parameter $\beta $ and a class of models $\mathcal{%
Q}$. Let 
\begin{equation}
f(\alpha )=\sup \{R_{\alpha }(Q\Vert P):Q\in \mathcal{Q}\},\qquad \alpha \in
(1,\infty ).  \label{eqn:defoff}
\end{equation}%
By appealing to \eqref{305} we can show the following.

\begin{theorem}
\label{thm2} Fix $\beta >0$, $g$, $P$, $\mathcal{Q}$ and $f$ as above.
Then 
\begin{equation*}
\sup_{Q\in \mathcal{Q}}\frac{1}{\beta }\log E_{Q}e^{\beta g}\leq
\inf_{\gamma >\beta }F(\beta ,\gamma ),\qquad \text{where}\qquad F(\beta
,\gamma )=\left[ \frac{f(\frac{\gamma }{\gamma -\beta })}{\gamma -\beta }+%
\frac{1}{\gamma }\log E_{P}e^{\gamma g}\right] .
\end{equation*}%
Moreover, $\tilde{\gamma}\mapsto F(\beta ,1/\tilde{\gamma})$ is a convex
function.
\end{theorem}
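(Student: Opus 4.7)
The plan is to prove the two assertions in succession. For the inequality, I would start from \eqref{305}, which yields, for every $Q\in\mathcal{P}$ and every $\gamma>\beta$,
\[
\frac{1}{\beta}\log\int e^{\beta g}\,dQ - \frac{1}{\gamma-\beta}R_{\gamma/(\gamma-\beta)}(Q\Vert P) \le \frac{1}{\gamma}\log\int e^{\gamma g}\,dP.
\]
Rearranging, specializing to $Q\in\mathcal{Q}$, bounding the \renyi term by $f(\gamma/(\gamma-\beta))$, taking the supremum over $Q\in\mathcal{Q}$ on the left, and finally taking the infimum over $\gamma>\beta$ on the right produces the claimed upper bound $\sup_{Q\in\mathcal{Q}}\beta^{-1}\log E_{Q} e^{\beta g}\le\inf_{\gamma>\beta}F(\beta,\gamma)$. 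This step is a direct rearrangement of \eqref{305}, and I do not expect any friction.

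For the convexity claim, I would set $\tilde\gamma=1/\gamma$, so that the admissible range $\gamma>\beta$ becomes $\tilde\gamma\in(0,1/\beta)$, and decompose
\begin{equation*}
F(\beta,1/\tilde\gamma)=\underbrace{\frac{\tilde\gamma}{1-\beta\tilde\gamma}\,f\!\left(\frac{1}{1-\beta\tilde\gamma}\right)}_{=:\,\psi(\tilde\gamma)}+\underbrace{\tilde\gamma\log E_{P} e^{g/\tilde\gamma}}_{=:\,\phi(\tilde\gamma)},
\end{equation*}
and prove that each summand is convex. For $\phi$, the cumulant generating function $\Lambda(\lambda):=\log E_{P} e^{\lambda g}$ is convex in $\lambda$, and $\phi(\tilde\gamma)=\tilde\gamma\,\Lambda(1/\tilde\gamma)$ is the perspective of $\Lambda$ along the ray $x\equiv 1$. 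Convexity of $\phi$ is then immediate from the fact that $(t,x)\mapsto t\Lambda(x/t)$ is jointly convex on $\{t>0\}$.

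For $\psi$ the key point is that $f$ itself need not be convex; rather, the rescaled quantity
\[
G(\alpha):=\alpha(\alpha-1)f(\alpha)=\sup_{Q\in\mathcal{Q}}\log E_{P}\!\left[(dQ/dP)^{\alpha}\right]
\]
is convex on $(1,\infty)$, because for each fixed $Q$ the map $\alpha\mapsto\log E_{P}[(dQ/dP)^{\alpha}]$ is the cumulant generating function of $\log(dQ/dP)$ under $P$, hence convex, and pointwise suprema of convex functions are convex. Setting $\alpha=1/(1-\beta\tilde\gamma)$, one checks that $(\alpha-1)/\beta=\tilde\gamma/(1-\beta\tilde\gamma)$ and $1/\alpha=1-\beta\tilde\gamma$, so a short computation gives
\[
\psi(\tilde\gamma)=\frac{\alpha-1}{\beta}f(\alpha)=\frac{1}{\beta}(1-\beta\tilde\gamma)\,G\!\left(\frac{1}{1-\beta\tilde\gamma}\right).
\]
This is $\beta^{-1}$ times the perspective $G^{p}(s,1)=sG(1/s)$ of $G$ evaluated at $s=1-\beta\tilde\gamma$, which is an affine function of $\tilde\gamma$. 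Since perspectives of convex functions are convex and affine precomposition preserves convexity, $\psi$ is convex on $(0,1/\beta)$, and summing with $\phi$ yields the claim.

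The main obstacle I anticipate is recognizing that the perspective trick cannot be applied to $f$ directly, but must be routed through the genuinely convex object $G(\alpha)=\alpha(\alpha-1)f(\alpha)$. Once this observation is in hand, both summands reduce to standard convexity-preserving operations, and the verification is purely algebraic.
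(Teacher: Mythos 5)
Your argument is correct, and it proves the same theorem by essentially the same decomposition, but with a slightly different mechanism for establishing convexity of the two summands. The paper isolates the key fact as Lemma~\ref{lem:convex_H}: for a non-negative random variable $X$, the map $\theta\mapsto m(\theta)=\theta\log EX^{1/\theta}$ is convex, proved directly from H\"older's inequality with exponents $p=\theta/(\lambda\theta_1)$, $q=\theta/((1-\lambda)\theta_2)$; this lemma is then applied (inside a $\sup_Q$ for the divergence term, and directly for the cost term, after the affine substitution $s=1-\beta\tilde\gamma$). You instead observe that $m(\theta)=\theta\Lambda(1/\theta)$ is the perspective of the cumulant generating function $\Lambda(\lambda)=\log EX^\lambda$, and invoke the standard fact that perspectives of convex functions are convex; similarly you package the divergence term through $G(\alpha)=\alpha(\alpha-1)f(\alpha)$, noting it is a pointwise supremum of cgf's and hence convex, and then take its perspective. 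The two routes are mathematically equivalent --- H\"older's inequality is precisely what underlies convexity of the perspective of a cgf --- so each buys you the same thing: the paper's is more self-contained (it only uses H\"older), while yours is a bit more conceptual and makes the appearance of the affine reparametrization and the perspective structure explicit, at the cost of citing the perspective-preserves-convexity fact rather than verifying it. Your identification of $G(\alpha)$ as the right convex object (rather than $f(\alpha)$ itself) is exactly the observation that the paper encodes by rewriting $\frac{1}{\gamma-\beta}R_{\gamma/(\gamma-\beta)}(Q\|P)=\frac{1}{\beta\alpha}\log\int(dQ/dP)^\alpha\,dP$ in the proof of Lemma~\ref{lem:convex_bd}, so the two proofs are in full agreement on that point as well.
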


We propose to use Theorem \ref{thm2} as the basis for the formulation of
optimization and control problems, so that given a class $\mathcal{Q}$, an
upper bound is obtained across $\mathcal{Q}$ for a RS control (or
optimization) problem. For certain problems we expect to be able to say
more, which is that the bound is tight in some sense. A controlled process $X
$ is considered with cost of the form $E[e^{\beta g(X)}]$, where $\beta $ is
the sensitivity parameter. The RS control problem will be concerned with $%
E^{a}[e^{\beta g(X)}]$, where $a$ denotes a control or a parameter to be
optimized over, and the robust version of this problem is one where a
control $a$ is sought to minimize the RS cost uniformly in the family of
models. In this context, Theorem \ref{thm2} gives 
\begin{equation}\label{306}
\inf_{a}\sup_{Q\in \mathcal{Q}}\frac{1}{\beta }\log E_{Q}^{a}e^{\beta g}\leq
\inf_{a}\inf_{\gamma >\beta }\left[ \frac{f(\frac{\gamma }{\gamma -\beta })}{%
\gamma -\beta }+\frac{1}{\gamma }\log E_{P}^{a}e^{\gamma g}\right] .
\end{equation}%
Two highly attractive aspects of this bound are

\begin{itemize}
\item[(i)] it turns an $\infty$-dimensional game into a finite dimensional
minimization problem when $a$ is finite dimensional;

\item[(ii)] the minimization over $\gamma$ is tractable computationally,
thanks to the convexity stated in Theorem \ref{thm2}.
\end{itemize}

\noindent With regard to the optimization over $a$, that is of course
related to the structure of the particular problem. However, it is worth
noting that the difficulty of this problem is often related to the
difficulty of the ordinary analogue, i.e., $\inf_{a}E_{P}^{a}g$. For the
example from queueing presented below we see that the risk-sensitive
optimization problem has a structure that is very similar to that of the
ordinary analogue. The function $f$ is the element that distinguishes this
problem from its relative entropy/ordinary cost analogue, for which $f$ is
essentially a constant. In some sense, $f$ captures the critical,
distribution dependent properties of the tail behavior of $\mathcal{Q}$.
The only part of Theorem \ref{thm2} that does not follow directly from 
\eqref{305} is the last sentence, 
which we now address.

\begin{lemma}
\label{lem:convex_H} Let $X$ be a non-zero non-negative random variable.
Then the function 
\begin{equation*}
m(\theta )\doteq \theta \log EX^{1/\theta }
\end{equation*}%
is convex in $\theta >0$.
\end{lemma}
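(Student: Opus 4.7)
\textbf{Plan for Lemma \ref{lem:convex_H}.} The plan is to establish convexity by a direct application of H\"older's inequality with carefully chosen conjugate exponents. A slicker cross-check would be the perspective-function argument: writing $m(\theta)=\theta\varphi(1/\theta)$ with $\varphi(s)\doteq\log EX^s$ the cumulant generating function of $\log X$ (convex in $s$), one recognizes $m$ as the perspective transform of $\varphi$ evaluated along the slice $x=1$, and the perspective of a convex function is jointly convex in its arguments for $t>0$. I would present the H\"older route since it is entirely self-contained.

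Concretely, fix $\theta_1,\theta_2>0$ and $\la\in(0,1)$, set $\theta=\la\theta_1+(1-\la)\theta_2$, and split $X^{1/\theta}=X^{\la/\theta}\cdot X^{(1-\la)/\theta}$. With the conjugate pair
\[
p_1=\frac{\theta}{\la\theta_1},\qquad p_2=\frac{\theta}{(1-\la)\theta_2},
\]
which satisfies $p_1^{-1}+p_2^{-1}=1$ and $p_1,p_2\ge 1$ by construction, and using the identities $(\la/\theta)p_1=1/\theta_1$ and $((1-\la)/\theta)p_2=1/\theta_2$, H\"older's inequality gives
\[
EX^{1/\theta}\;\le\;(EX^{1/\theta_1})^{1/p_1}(EX^{1/\theta_2})^{1/p_2}.
\]
Taking logarithms, multiplying by $\theta$, and using $\theta/p_1=\la\theta_1$ and $\theta/p_2=(1-\la)\theta_2$ turns this into $m(\theta)\le\la m(\theta_1)+(1-\la)m(\theta_2)$, which is exactly the claimed convexity.

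The non-zero non-negative assumption guarantees $P(X>0)>0$, hence $EX^{1/\theta}\in(0,\iy]$, so $\log EX^{1/\theta}$ takes values in $(-\iy,+\iy]$. If some $EX^{1/\theta_i}=+\iy$ the convexity inequality is trivial, so without loss of generality both expectations may be assumed finite for the H\"older step. I do not anticipate any real obstacle: this is a classical log-convexity statement, and the conjugate pair $(p_1,p_2)$ above is the unique choice forcing the H\"older split to land exactly on exponents $1/\theta_1$ and $1/\theta_2$; the only mild point to verify is $p_i\ge 1$, which is immediate from $\la_i\theta_i\le\theta$.
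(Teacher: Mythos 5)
Your proof is correct and is essentially identical to the paper's: both split $X^{1/\theta}=X^{\lambda/\theta}\cdot X^{(1-\lambda)/\theta}$ and apply H\"older with the conjugate pair $p=\theta/(\lambda\theta_1)$, $q=\theta/((1-\lambda)\theta_2)$, after reducing to the case where both $m(\theta_i)$ are finite. The perspective-function remark is a nice sanity check but, as you note, the H\"older route is what the paper uses as well.
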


\begin{proof}
	It suffices to show that
	\begin{equation*}
		m(\theta) \le \lambda m(\theta_1) + (1-\lambda) m(\theta_2)
	\end{equation*}
	for every $\theta_1,\theta_2 \in (0,\infty)$ and $\lambda \in (0,1)$, where $\theta \doteq \lambda \theta_1 + (1-\lambda) \theta_2$.
	Assume without loss of generality that $m(\theta_1) < \infty$, $m(\theta_2) < \infty$.
	Applying H\"{o}lder's inequality with $p=\frac{\theta}{\lambda\theta_1}$ and $q=\frac{\theta}{(1-\lambda)\theta_2}$, we have
	\begin{align*}
		m(\theta) & = \theta \log E[X^{\frac{\lambda}{\theta}} X^{\frac{1-\lambda}{\theta}}] \le \theta \log \left[ \left( EX^{\frac{\lambda}{\theta}p} \right)^{1/p} \left( EX^{\frac{1-\lambda}{\theta}q} \right)^{1/q} \right] \\
		& = \frac{\theta}{p} \log EX^{1/\theta_1} + \frac{\theta}{q} \log EX^{1/\theta_2} = \lambda m(\theta_1) + (1-\lambda) m(\theta_2).
	\end{align*}
	This completes the proof.
	\qed
\end{proof}

\begin{lemma}
\label{lem:convex_bd} For $0<\beta <\gamma$, let
\begin{equation*}
h(\gamma )\doteq \frac{f(\frac{\gamma }{\gamma -\beta })}{\gamma -\beta }+%
\frac{1}{\gamma }\log E_{P}e^{\gamma g},
\end{equation*}%
where $f(\alpha )$ is defined by (\ref{eqn:defoff}). Then the function $%
\tilde{h}(\tilde{\gamma})\doteq h(1/\tilde{\gamma})$ is convex in $\tilde{%
\gamma}\in (0,1/\beta )$, i.e, $h(\gamma )$ is convex in $1/\gamma $.
\end{lemma}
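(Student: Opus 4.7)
The plan is to decompose $\tilde h(\tilde\gamma) = h(1/\tilde\gamma)$ into two pieces, each of which will be shown to be convex by applying Lemma \ref{lem:convex_H} to a suitable positive random variable. Since the sum of convex functions is convex, this will finish the proof.

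First I handle the exponential integral term. With $\tilde\gamma = 1/\gamma$, we have
\[
\frac{1}{\gamma}\log E_P e^{\gamma g} = \tilde\gamma \log E_P (e^g)^{1/\tilde\gamma} = m(\tilde\gamma),
\]
where $m$ is the function of Lemma \ref{lem:convex_H} applied to the positive random variable $X = e^g$ (bounded since $g$ is bounded). Hence this term is convex in $\tilde\gamma > 0$.

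Next I handle the divergence term $\frac{1}{\gamma-\beta}f\bigl(\frac{\gamma}{\gamma-\beta}\bigr)$. Substituting $\gamma = 1/\tilde\gamma$, elementary algebra gives $\gamma/(\gamma-\beta) = 1/(1-\beta\tilde\gamma)$ and $\gamma-\beta = (1-\beta\tilde\gamma)/\tilde\gamma$, so
\[
\frac{1}{\gamma-\beta}f\Bigl(\frac{\gamma}{\gamma-\beta}\Bigr)
= \frac{\tilde\gamma}{1-\beta\tilde\gamma} \sup_{Q\in\mathcal Q} R_\alpha(Q\Vert P),
\qquad \alpha = \frac{1}{1-\beta\tilde\gamma}.
\]
Using the definition $R_\alpha(Q\Vert P) = \frac{1}{\alpha(\alpha-1)}\log E_P(dQ/dP)^\alpha$ and $\alpha(\alpha-1) = \beta\tilde\gamma/(1-\beta\tilde\gamma)^2$, the prefactors combine to yield, for each $Q\ll P$,
\[
\frac{\tilde\gamma}{1-\beta\tilde\gamma}\, R_\alpha(Q\Vert P)
= \frac{1}{\beta}\, (1-\beta\tilde\gamma)\,\log E_P\Bigl[(dQ/dP)^{1/(1-\beta\tilde\gamma)}\Bigr].
\]
Setting $\theta = 1-\beta\tilde\gamma$ (an affine function of $\tilde\gamma$, strictly positive on $(0,1/\beta)$) and $X = dQ/dP$, the right-hand side is $\beta^{-1} m(\theta)$ with $m$ as in Lemma \ref{lem:convex_H}. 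Thus for each fixed $Q$ the map $\tilde\gamma\mapsto \frac{\tilde\gamma}{1-\beta\tilde\gamma} R_\alpha(Q\Vert P)$ is the composition of a convex function with an affine function, hence convex on $(0,1/\beta)$. Taking the supremum over $Q\in\mathcal Q$ preserves convexity, so the divergence term is convex in $\tilde\gamma$.

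Adding the two convex pieces yields convexity of $\tilde h(\tilde\gamma)$ on $(0,1/\beta)$, which is the claim. The only delicate step is the algebraic reduction in the second paragraph, where the $\frac{1}{\alpha(\alpha-1)}$ prefactor in the paper's definition of $R_\alpha$ must be tracked carefully so that the resulting expression fits the exact form $\theta\log E X^{1/\theta}$ required by Lemma \ref{lem:convex_H}; once that identity is in place the convexity is essentially automatic.
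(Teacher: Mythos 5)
Your proof is correct and follows essentially the same route as the paper's: substitute $\tilde\gamma=1/\gamma$, reduce each of the two terms to the form $\theta\log E X^{1/\theta}$ (with $X=e^g$ and $X=dQ/dP$, respectively, and $\theta$ an affine function of $\tilde\gamma$), invoke Lemma \ref{lem:convex_H}, and close with the fact that a supremum of convex functions is convex. Your write-up is in fact slightly more explicit than the paper's about the affine substitution and the supremum step, but the decomposition, the key algebraic identity, and the appeal to Lemma \ref{lem:convex_H} are all the same.
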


\begin{proof}
Since 
\begin{equation*}
R_{\alpha }(Q\Vert P)=\frac{1}{\alpha (\alpha -1)}\log \int \left( \frac{dQ}{%
dP}\right) ^{\alpha }dP,
\end{equation*}%
we can write 
\begin{align*}
\frac{1}{\gamma -\beta }R_{\frac{\gamma }{\gamma -\beta }}(Q\Vert P)& =\frac{%
1}{\gamma -\beta }\frac{1}{\frac{\gamma }{\gamma -\beta }(\frac{\gamma }{%
\gamma -\beta }-1)}\log \int \left( \frac{dQ}{dP}\right) ^{\frac{\gamma }{%
\gamma -\beta }}dP \\
& =\frac{1}{\beta }\frac{1}{\frac{\gamma }{\gamma -\beta }}\log \int \left( 
\frac{dQ}{dP}\right) ^{\frac{\gamma }{\gamma -\beta }}dP.
\end{align*}%
Therefore
\begin{equation*}
\tilde{h}(\tilde{\gamma})=h(1/\tilde{\gamma})=\sup_{Q\in \mathcal{Q}}\left[ \frac{1}{\beta }%
(1-\beta \tilde{\gamma})\log \int \left( \frac{dQ}{dP}\right) ^{1/(1-\beta 
\tilde{\gamma})}dP+\tilde{\gamma}\log E_{P}\left([e^{g}]^{1/\tilde{\gamma%
}}\right) \right] .
\end{equation*}%
From Lemma \ref{lem:convex_H} we see that the last term is convex in $\tilde{%
\gamma}$. Since $1-\beta \tilde{\gamma}$ is just an affine function of $%
\tilde{\gamma}$, it follows from Lemma \ref{lem:convex_H} again that the
first term is also convex in $\tilde{\gamma}$. This completes the proof.
\qed
\end{proof}

\subsection{A risk-sensitive scheduling control problem}\label{sec42}

We focus on one out of various RS control problems that are of interest in
the multi-class $G/G/1$ setting. In this setting each arrival requires a
single service. A recurring theme in the literature is how to schedule
service so as to minimize delay or queue length costs. The need to cover
general service time distributions has been recognized many times in earlier
work on this model. However, under RS cost, this question has only been
addressed in the Markovian setting. Our goal here is to show how the
perturbation bounds, specifically Theorem \ref{thm2}, can be used to yield
performance guarantees for the non-Markovian setting.

Let $N$ denote the number of classes, and for $i\in \{1,\ldots ,N\}$ let $%
X_{i}$, $A_{i}$ and $S_{i}$ denote the $i$th queue length process, arrival
process and potential service process. Then for each $i$, the balance
equation holds, namely 
\begin{equation*}
X_{i}(t)=X_{i}(0)+A_{i}(t)-S_{i}(U_{i}(t)),
\end{equation*}%
where $U_{i}(t)$ denotes the cumulative time devoted by the server to class $%
i$ by time $t$. In particular, $U_{i}$ are nondecreasing, Lipschitz
continuous with constant 1, and $\sum_{i}U_{i}(t)\leq t$ for all $t$. 
We regard $A$ and $S$ as {\it primitive processes}, and call $X$ and $U$
a {\it state process} and a {\it control process}, respectively, if $U$ is adapted to the
filtration $\calF_t=\sig\{A_i(s),X_i(s), s\le t,i\le N\}$.

It is assumed that $A_{i}$ and $S_{i}$ are mutually independent renewal
processes. In the $n$th system, $A_{i}$ and $S_{i}$ are replaced by $%
A_{i}^{n}=A_{i}(n\cdot )$ and $S_{i}^{n}=S_{i}(n\cdot )$, and the
corresponding control and queue length processes are denoted by $U^{n}$ and $%
X^{n}$, respectively. Normalized queue length is denoted by $\bar{X}%
_{i}^{n}=n^{-1}X_{i}^{n}$.

Fix $T>0$ and constants $c_{i}>0$. For $\beta >0$, denote
\begin{equation*}
J^{n}(U^{n};Q,\beta )=\frac{1}{n}\frac{1}{\beta }\log E_{Q}e^{\beta
\sum_{i}c_{i}X_{i}^{n}(T)}.
\end{equation*}%
There is redundancy in the definition with respect to $\beta $ and $c_{i}$.
We use the parameter $\beta $ to be consistent with Theorem \ref{thm2}, but
one could let $\beta =1$ without loss. \ 

Denote by $P$ the Markovian model, where $A_{i}$ and $S_{i}$ are Poisson
processes with parameters $\lambda _{i}$ and $\mu _{i}$, respectively.
Denoting 
\begin{equation*}
V^{n}(P,\beta )=\inf_{U^{n}}J^{n}(U^{n};P,\beta ),
\end{equation*}%
where the infimum ranges over control processes $U^n$,
the limit $V(P,\beta )=\lim_{n}V^{n}(P,\beta )$ was shown to exist and was
characterized in \cite{AGS1} as the viscosity solution of a HJB equation. In 
\cite{AGS2} it was proved that for zero initial conditions one has $V\leq
\beta ^{-1}WT$, where 
\begin{equation*}
W=W(\beta )=\min_{u\in \mathbf{u}}\sum_{i=1}^{N}(\hat{\lambda}_{i}-u_{i}\hat{%
\mu}_{i})^{+},
\end{equation*}%
$\mathbf{u}=\{u\in {\mathbb{R}}_{+}^{N}:\sum u_{i}\leq 1\}$, $\hat{\lambda}%
_{i}=\lambda _{i}(e^{\beta c_{i}}-1)$ and $\hat{\mu}_{i}=\mu
_{i}(1-e^{-\beta c_{i}})$. It was also shown in \cite{AGS2} that when $%
e^{-\beta c_{i}}<\lambda _{i}/\mu _{i}$ for all $i$ the bound is tight,
i.e., $V(\beta)=\beta ^{-1}WT$. In this case, it is asymptotically optimal to
prioritize according to the index $\mu _{i}(1-e^{-\beta c_{i}})$, regardless
of $T$, with larger values given priority.

Let $P$ be fixed as above, and consider a family $\mathcal{Q}$ defined via
part (ii) of Theorem \ref{th3}. That is, letting $h_{i,1}$ and $h_{i,2}$
stand for the hazard rates for $A_{i}$ and $S_{i}$, respectively, assume
that 
\begin{equation}\label{310}
a_{i,1}\leq \frac{h_{i,1}(\cdot )}{\lambda _{i}}\leq b_{i,1},\qquad
a_{i,2}\leq \frac{h_{i,2}(\cdot )}{\mu _{i}}\leq b_{i,2},
\end{equation}%
for some constants $0<a_{i,j}<b_{i,j}$. Denote by $Q_{T}^{n}$ and $P_{T}^{n}$
the law of $(A^{n},S^{n})|_{[0,T]}$ under $Q$ and $P$. Then by \eqref{201},
for all $Q\in \mathcal{Q}$, 
\begin{equation*}
R_{\alpha }(Q_{T}^{n}\Vert P_{T}^{n})\leq nTf_{0}(\alpha ),\qquad \text{where%
}\qquad f_{0}(\alpha )=\sum_{i}\left[ k_{\alpha }(a_{i,1})\vee k_{\alpha
}(b_{i,1})\right] \lambda _{i}+\sum_{i}\left[ k_{\alpha }(a_{i,2})\vee
k_{\alpha }(b_{i,2})\right] \mu _{i}
\end{equation*}%
[we recall $k_{\alpha }(x)=[x^{\alpha }-x\alpha +\alpha -1]/\alpha (\alpha
-1)$ introduced in \eqref{094}]. Thus Theorem \ref{thm2} may be applied with $f(\alpha
)=nTf_{0}(\alpha )$. Denoting 
\begin{equation*}
V^{n}(\mathcal{Q},\beta )=\inf_{U^{n}}\sup_{Q\in \mathcal{Q}%
}J^{n}(U^{n};Q,\beta ),
\end{equation*}%
we have by Theorem \ref{thm2} that, for all $n$, 
\begin{equation}
V^{n}(\mathcal{Q},\beta )\leq \inf_{U_{n}}\inf_{\gamma >\beta }\left[ \frac{%
Tf_{0}(\frac{\gamma }{\gamma -\beta })}{\gamma -\beta }+J^{n}(U^{n};P,\gamma
)\right] =\inf_{\gamma >\beta }\left[ \frac{Tf_{0}(\frac{\gamma }{\gamma
-\beta })}{\gamma -\beta }+V^{n}(P,\gamma )\right] .  \label{308}
\end{equation}

As mentioned above, for each $\gamma $, $\limsup_{n}V^{n}(P,\gamma )\leq
\gamma ^{-1}W(\gamma )T$ (according to Theorem 2.1 of \cite{AGS2}). Hence
we obtain the following upper bound.
\begin{theorem}\label{th08}
For each $\beta$, one has
\begin{equation}
\limsup_{n}V^{n}(\mathcal{Q},\beta )\leq
B(\calQ,\beta)\doteq \inf_{\gamma >\beta }\left[ \frac{%
f_{0}(\frac{\gamma }{\gamma -\beta })}{\gamma -\beta }+\frac{W(\gamma )}{%
\gamma }\right] T.  \label{309}
\end{equation}%
\end{theorem}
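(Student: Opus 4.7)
The plan is to simply pass to the $\limsup$ as $n\to\infty$ on both sides of the already-established finite-$n$ bound \eqref{308}, and then feed in the known asymptotic estimate for the reference (Markovian) problem. Concretely, starting from
\[
V^{n}(\mathcal{Q},\beta)\;\le\;\inf_{\gamma>\beta}\left[\frac{T f_{0}\!\left(\tfrac{\gamma}{\gamma-\beta}\right)}{\gamma-\beta}+V^{n}(P,\gamma)\right],
\]
the first step is to use the elementary inequality $\limsup_n \inf_\gamma G_n(\gamma)\le \inf_\gamma \limsup_n G_n(\gamma)$, valid for any family of functions $G_n$. Applied here, this yields
\[
\limsup_n V^{n}(\mathcal{Q},\beta)\;\le\;\inf_{\gamma>\beta}\limsup_n\left[\frac{T f_{0}\!\left(\tfrac{\gamma}{\gamma-\beta}\right)}{\gamma-\beta}+V^{n}(P,\gamma)\right].
\]

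Next, since the divergence term $\tfrac{T f_{0}(\gamma/(\gamma-\beta))}{\gamma-\beta}$ does not depend on $n$, it passes outside the $\limsup_n$, so the right-hand side becomes
\[
\inf_{\gamma>\beta}\left[\frac{T f_{0}\!\left(\tfrac{\gamma}{\gamma-\beta}\right)}{\gamma-\beta}+\limsup_n V^{n}(P,\gamma)\right].
\]
At this point I would invoke the reference-model estimate recalled from \cite{AGS2} (Theorem 2.1), namely $\limsup_n V^{n}(P,\gamma)\le \gamma^{-1}W(\gamma)T$, obtaining
\[
\limsup_n V^{n}(\mathcal{Q},\beta)\;\le\;\inf_{\gamma>\beta}\left[\frac{T f_{0}\!\left(\tfrac{\gamma}{\gamma-\beta}\right)}{\gamma-\beta}+\frac{W(\gamma)}{\gamma}T\right],
\]
which, after factoring out the common $T$, is exactly $B(\mathcal{Q},\beta)$.

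There is essentially no obstacle: the heavy lifting has been carried out earlier. The tight finite-$n$ bound \eqref{308} follows from Theorem \ref{thm2} applied with $f(\alpha)=nTf_0(\alpha)$, which in turn uses the RDR computation from Theorem \ref{th3}(ii) applied to the $2N$ independent renewal primitives $(A_i,S_i)$ under the constraint \eqref{310} (with additivity \eqref{03} contributing the factor $2N$ of summands that define $f_0$). The only subtle point worth flagging in the write-up is that the $\inf_{U^n}$ and $\inf_{\gamma>\beta}$ commute in \eqref{308} because $\frac{T f_{0}(\gamma/(\gamma-\beta))}{\gamma-\beta}$ does not involve $U^n$; this is what produces $V^n(P,\gamma)$ (rather than a joint optimum) after the swap, and it is the single reason the bound reduces cleanly to the Markovian value function that \cite{AGS2} controls. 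Other than noting this, the argument is a one-line chain of inequalities.
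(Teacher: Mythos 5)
Your proof is correct and essentially identical to the paper's: the paper carries out the same $\limsup_n\inf_\gamma \le \inf_\gamma\limsup_n$ interchange concretely, by picking an $\varepsilon$-optimal $\gamma_0$ for the right-hand side of \eqref{309}, substituting it into \eqref{308}, taking $\limsup_n$ using $\limsup_n V^n(P,\gamma_0)\le \gamma_0^{-1}W(\gamma_0)T$ from \cite{AGS2}, and then sending $\varepsilon\downarrow 0$. The two write-ups use the same ingredients (\eqref{308} plus the reference-model limit) and differ only in whether the interchange is invoked as a general inequality or established inline via an $\varepsilon$-argument.
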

\noi{\bf Proof.}
To deduce \eqref{309} from \eqref{308}, let ${\varepsilon }>0$ be
given and let $\gamma _{0}$ be ${\varepsilon }$-optimal for the RHS of %
\eqref{309}. Then use \eqref{308} with the infimum over $\gamma $ replaced
by the substitution $\gamma =\gamma _{0}$. Taking the limit and using the
aforementioned limit result for $V^{n}(P,\gamma _{0})$, then sending ${%
\varepsilon }\rightarrow 0$, gives \eqref{309}.
\qed

\skp

We now show that using the last result one can identify a policy
for which the robust bound \eqref{309} is valid.
\begin{remark}
The robust bound derived in Theorem \ref{th08} can in fact be achieved by an index policy.
Given $\beta$ and $\eps>0$, let
$\gamma ^{\ast }$ be an $\eps$-minimizer of the RHS of \eqref{309}.
Let us show that
prioritizing queues according to the index $\mu _{i}(1-e^{-\gamma ^{\ast }c_{i}})$
(rather than $\mu _{i}(1-e^{-\beta c_{i}})$) guarantees the uniform bound
\[
\limsup_{n}\sup_{Q\in\calQ}J^n(U^{*,n};Q,\beta)\le B(\calQ,\beta)+\eps,
\]
where $U^{*,n}$ denotes the control corresponding to the above mentioned index.
To see this, note that by Theorem~\ref{thm2}, when $U^{*,n}$ is implemented
one has for all $n$
\[
J^{n}(U^{*,n};\mathcal{Q},\beta )
\leq
\inf_{\gamma >\beta }\left[
\frac{Tf_{0}(\frac{\gamma }{\gamma -\beta })}{\gamma -\beta }+J^{n}(U^{*,n};P,\gamma)
\right]
\le\left[ \frac{Tf_{0}(\frac{\gamma^* }{\gamma^* -\beta })}{\gamma^* -\beta }+J^{n}(U^{*,n};P,\gamma^*)\right],
\]
and since by \cite{AGS2}, under the fixed priority $U^{*,n}$ one has that
$J^n(U^{*,n};P,\gamma^*)$ converges to $(\gamma^*)^{-1}TW(\gamma^*)$, the claim follows.
\end{remark}

Define $\ell (x)=x\log x-x+1$ for $x\geq 0$. When $\beta$ is small a
natural assumption to make on the $a_{i,k}$ and $b_{i,k}$, consistent with
the fact that the R\'{e}nyi rate $k_{\alpha }(x)$ becomes the relative entropy
rate $\ell (x)$ as $\alpha \downarrow 1$, is that $a_{i,k}<1<b_{i,k}$ and $%
\ell (a_{i,k})=\ell (b_{i,k})$ so long as $\ell (b_{i,k})\leq 1$, and $%
a_{i,k}=0$ if $\ell (b_{i,k})>1$. In this case one can show that the bounds
are also tight in a precise sense, which is that there exists a model in $%
\mathcal{Q}$ such that the two sides differ by no more than error term that
vanishes as $\beta \downarrow 0$ and which can be calculated.

As pointed out above, the robust RS control policy thus obtained prioritizes according
to an index that is distinct from that used for the reference model.
This illustrates an important aspect of the general approach of using
Theorem \ref{thm2} and \eqref{306}, namely that
there is more to this approach than directly applying the \renyi bounds
to the state process obtained under the optimal RS control
for the reference model $P$.
Indeed, the latter approach would give rise to a control for $(\calQ,\beta)$
that agrees with that for $(P,\beta)$. Instead, the minimization problem \eqref{306}
allows for the control (and consequently the state process)
to differ from the one that is optimal for $(P,\beta)$ by allowing
freedom in choosing the sensitivity parameter $\gamma$.
Thus $\gamma$ is selected to best fit
the family $\calQ$, which may indeed result in a control policy
that is not optimal for the `reference problem' $(P,\beta)$.

\begin{example}
We evaluate the bound $B(\calQ,\beta)$ of \eqref{309} numerically.
We consider an example with 5 classes, with data
$\la=(1, 1.5, 1.8, 2, 2)$ and $\mu=(8, 10, 12, 9, 14)$.
The overall traffic intensity $\rho=\sum_i\frac{\la_i}{\mu_i}$ is $\rho=0.790$.
The relative costs $c_i$ are taken to be $c=(0.3, 0.2, 0.2, 0.1, 0.2)$,
and the time horizon $T=1$.

First the reference model is considered.
When $\calQ$ is a singleton consisting
of the model $P$, the bound is
$B(\calQ,\beta)=\beta^{-1}W(\beta)=V(\beta)$.
This function is shown in blue is Figure \ref{fig2} (left),
for $\beta$ in the range $[0,15]$.

Consider the family determined by \eqref{310}, with
$a_{i,1}=a_{i,2}=1-\del$ and $b_{i,1}=b_{i,2}=1+\del$ for all $i$,
for $\del=0.65$. Recall that this corresponds to a family of models
driven by renewal processes,
where the interarrival and service time distributions have hazard rates
that deviate from the respective Poisson rates of the reference model by
at most 65\%.
Moreover, according to Theorem \ref{th3}(ii), this may also stand for
a family of models where the driving processes are Cox, for which
the stochastic intensities deviate from those of the reference model
by at most 65\%.
This family is denoted by $\calQ_2$ (for it corresponds to part (ii) of Theorem \ref{th3}).
In Figure \ref{fig2} (left) the bound $B(\calQ_2,\beta)$ the RS cost for this family
is shown in solid black line.
A dotted black line shows the bound $B(\calQ_2,\beta)$
where now the parameter $\del$ is taken as $\del=0.15$.

Next, consider the family
of models, denoted by $\calQ_3$, for which the driving processes are
as in Theorem \ref{th3}(iii).
These are Cox processes
which, in addition to bounds on the deviation from
the reference Poisson rates, the stochastic intensities
satisfy a long run average constraint.
For example, the potential service process for class 1
has stochastic intensity that deviates from $\mu_1$ by at most
$\del$, and in addition is constrained to have a long run average
equal to $\mu_1$.
In Figure \ref{fig2} (left), the bound $B(\calQ_3,\beta)$ is shown
in solid red line and in dotted red line for $\del=0.65$
and $\del=0.15$, respectively.
As expected, the bounds for $\calQ_3$ are smaller than for $\calQ_2$,
and they are smaller for $\del=0.15$ than they
are for $\del=0.65$.

Finally, all five graphs are repeated in Figure \ref{fig2} (right)
with a different $\la$, namely
$\la=(0.5, 0.75, 0.9, 1, 1)$ (leaving the remaining parameters
unchanged) in which case $\rho=0.395$. The queueing
system is more stable in this case, and the performance
guarantees, as measures by the RS cost bounds, are smaller
as expected.

\end{example}

\begin{figure}
\center{
\includegraphics[width=0.48\textwidth,height=0.45\textwidth]{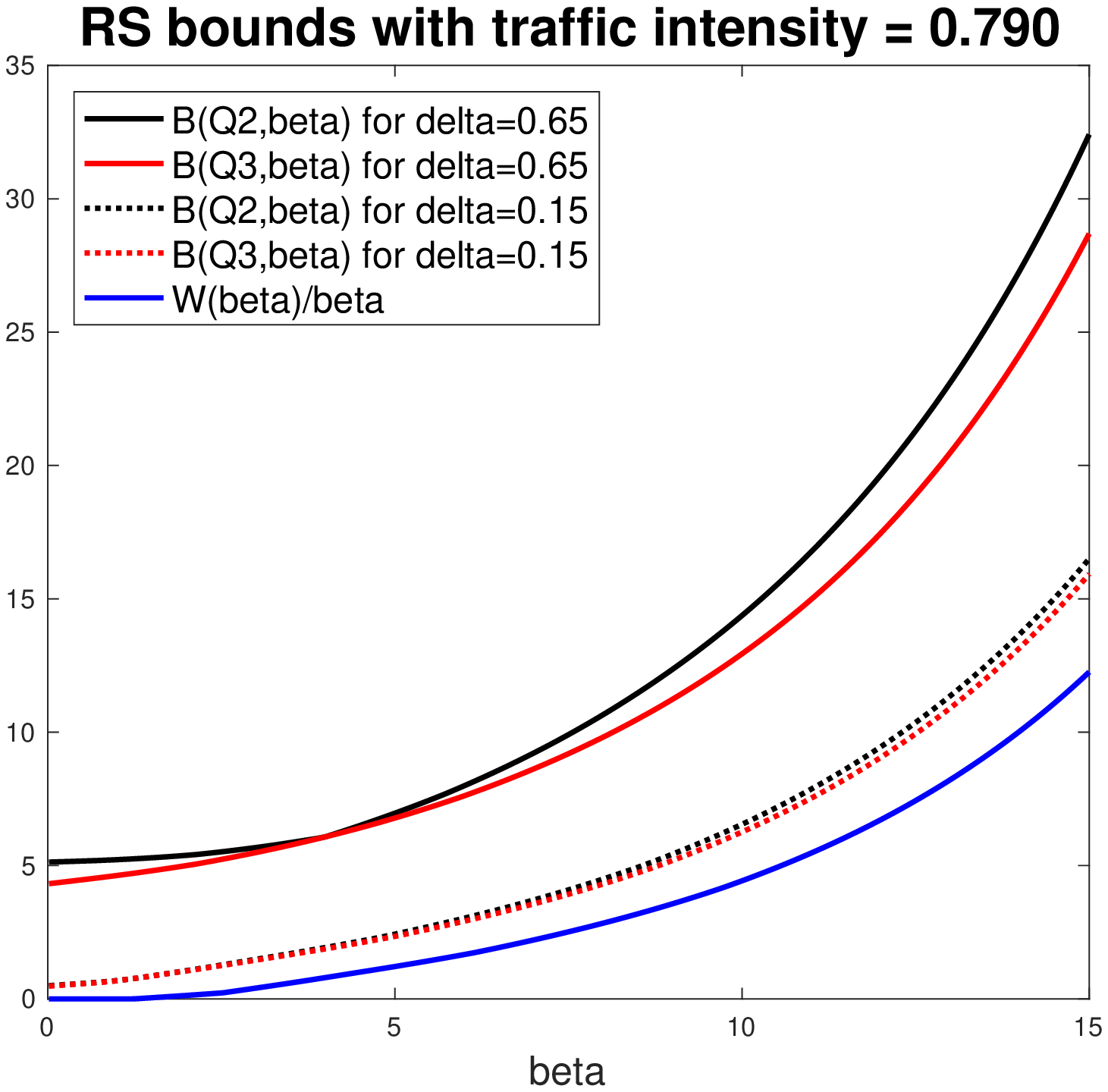}
\includegraphics[width=0.48\textwidth,height=0.45\textwidth]{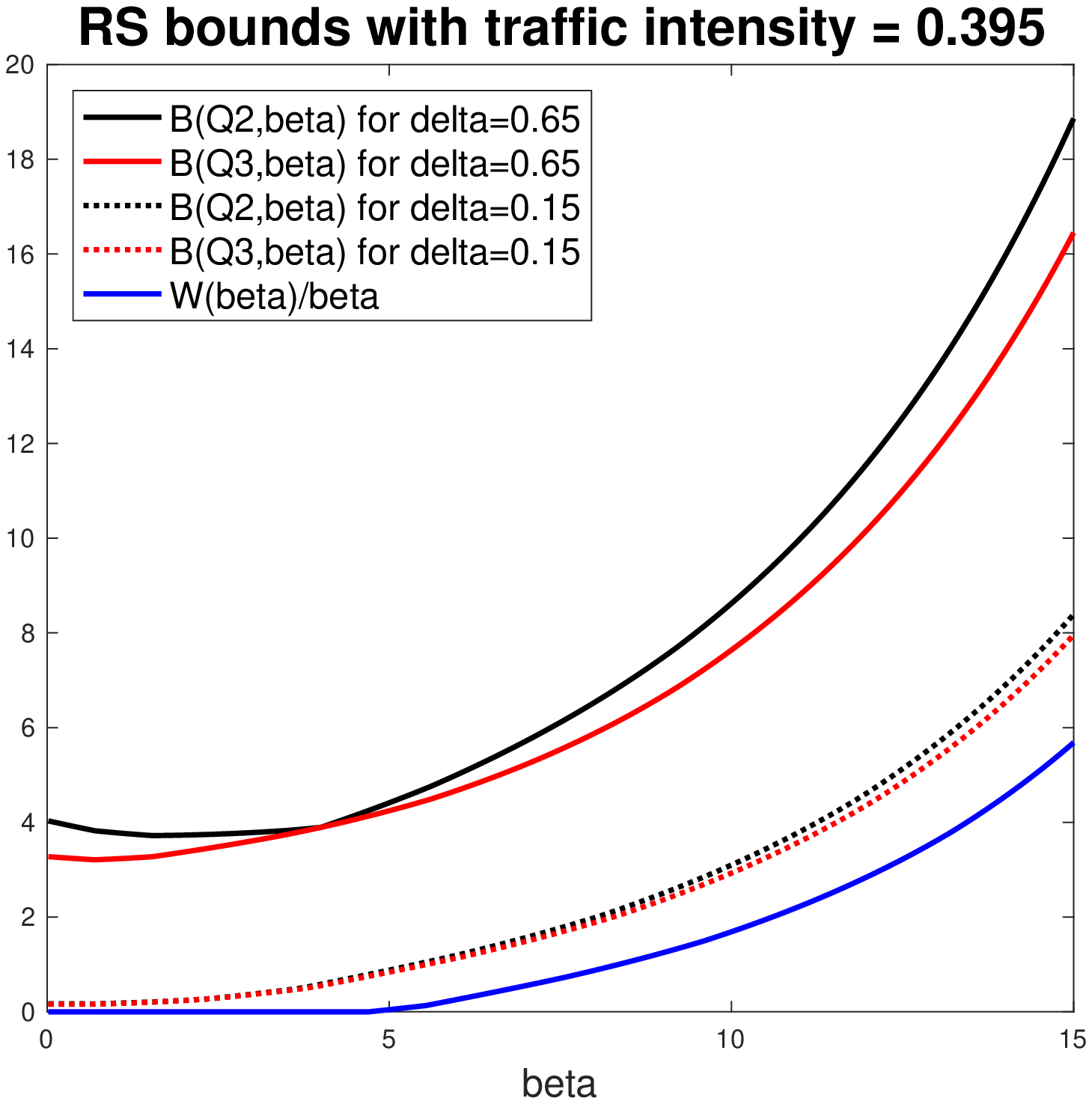}
}
\caption{Robust risk-sensitive bounds for the scheduling problem}
\label{fig2}
\end{figure}

\section{Robust LD estimates for queueing models with reneging}
\beginsec
\label{sec5}

In this section we study a multi-server queue with reneging, under a scaling
where the number of servers $n$ and the arrival process grow proportionally. This scaling
has been referred to as a \textit{many-server} scaling,
studied for the first time in \cite{halfin}, for CLT asymptotics in the case of exponential servers,
and then in the context of general service times in \cite{kasram11},
\cite{kasram13}, \cite{reed09}, \cite{kanram10} (for LLN and CLT asymptotics).
For models that accommodate reneging, it is natural to define performance in terms
of the reneging count. This was addressed recently in
\cite{ABDW1}, where the large time, large $n$ asymptotics of
the probability of atypically large reneging count was identified precisely.
These results were concerned with the Markovian $M/M/1+M$ and $M/M/n+M$
models. Whereas the results of \cite{ABDW1} identify exact LD asymptotics for one particular model,
our interest here is in the spirit of robust bounds,
in estimates that are uniform within families of models, that are moreover non-Markovian.
The results from \cite{ABDW1} will serve us as reference for these uniform bounds.

Treating general service time distributions via a Markovian reference model
relies, according to our approach, on \renyi divergence estimates of
the underlying primitives, which in this case are given by the potential service processes
for each server. This is precisely where our results from Section \ref{sec3} on divergence
of various counting processes w.r.t.\ Poisson become useful.
A similar remark holds for other primitives of the model, namely arrival and patience times.
Specifying server characteristics by means of a counting process that lies
in a given \renyi radius about some nominal Poisson gives room for modelling
servers as different from one another. In fact, in this framework
there is no benefit to requiring that servers be statistically identical.
This gives rise to a set of models much more rich than $G/G/n+G$, that
accommodates (a) heterogeneous servers, and (b) time varying processing capacities.

Whereas item (a) above
allows for distinct probabilistic characteristics for each server,
our approach is to express the degree of uncertainty
(w.r.t.\ service times distributions) on equal terms for all servers.
This should not be confused with models such as
$G/G/n$ or $G/G/n+G$ where all servers operate under the same distribution.
This modelling approach is perhaps more satisfactory than models like $G/G/n$
in situations where
there is no information that distinguishes between servers but at the same time
there is no reason to believe that all are identical.
An analogous remark is valid for modelling patience of different customers.

\subsection{Model and performance measure}

\subsubsection{Model equations}

Customers arrive at the system with service requirement that can be handled
by any one of $n$ parallel servers. They are queued if no servers are available
upon arrival, and renege if they are still in the queue at the time their patience expires.
The priority within the queue is according to FIFO. Determining which available server
takes the next customer is according to a fixed ordering of the servers.

Because, on the one hand, servers have different characteristics and, on the other hand, customer
reneging depends on their state (specifically, whether they are in the queue and for how long),
the model equations must account for
the state of each server as well as the state of each customer.
Hence our system of equations
will be based on a balance equation for each server and one for each
(of the infinitely many) customers.

The model equations are therefore somewhat complicated. However, because our approach is based
on the existence of a mapping from primitive processes to the full state of the system,
it is necessary to write down these equations so that a concrete mapping is indeed well defined.
Measure valued processes are often used for encoding the dynamics, however
it seems less complicated in the current context to write balance equations, as we will.
Also, we are careful to write the equations without relying on an assumption
that the underlying discrete events occur one at a time; that is, they allow for
the possibility of
simultaneous arrival and departure, simultaneous departures at different servers, etc.
This assures that the mapping is defined on the full path space of the primitive processes.

The customers are indexed by $\N$, and a marked point process $\sum_{i\in\N}\del_{(T_i,P_i)}$,
with sample paths in $\calD([0,\iy):\calM_F(\R_+^2))$
encodes their time of arrival $T_i$ and their patience time $P_i$.
With a slight abuse of notation, in what follows we refer to $\calA=(T_i,P_i)$
as the marked point process. It is assumed that
$0\le T_1\le T_2\le\cdots$ and $P_i>0$ for all $i$.
Those customers $i$ with $T_i=0$ are initially in the system.
The $n$ servers are indexed by $[n]=\{1,\ldots,n\}$,
and a counting process $S_j$ is associated with each server $j\in[n]$,
representing its potential service process. That is, $S_j(t)$ customers depart server
$j$ by the time this server has worked for $t$ units of time.

We start with a balance equation for each customer. For $i\in\N$, we have
\[
Q_i(t)=A_i(t)-K^{\rm cust}_i(t)-R_i(t).
\]
Here, the four processes $Q_i$, $A_i$, $K^{\rm cust}_i$ and $R_i$
are $\{0,1\}$ valued, representing queueing, arrival, routing
and reneging, respectively, associated with customer $i$.
Thus $Q_i$ (resp., $A_i$, $K^{\rm cust}_i$, $R_i$) takes the value 1 at time $t$
if customer $i$ in the queue at that time (resp., has arrived prior to or at $t$, has been routed to service
prior to or at $t$, has reneged prior to or at $t$). In particular, we have $A_i(t)=1_{\{t\ge T_i\}}$.

Next, a balance equation holds for each server $j\in[n]$, in the form
\[
B_j(t)=B_j(0)+K^{\rm serv}_j(t)-D_j(t), \qquad D_j(t)=S_j\Big(\int_0^tB_j(s)ds\Big).
\]
Here, $B_j$, $K^{\rm serv}_j$ and $D_j$ are busyness, routing and departure processes
associated with server $j$, taking values in $\{0,1\}$, $\Z_+$ and $\Z_+$, resp.
Namely, $B_j$ takes the value $1$ at $t$ if the server is busy,
and $K^{\rm serv}_j$ and $D_j$ are counting processes for the number customers routed
to and, resp., departing from server $j$.

The initial conditions are assumed to match and to satisfy a work conservation condition.
Namely, the number of customers initially in the system, $X(0)=\max\{i:T_i=0\}$,
and the number of servers initially busy, $B(0)=\sum_jB_j(0)$, satisfy
$B(0)=X(0)\w n$.

Next we describe how the routing processes are determined so as to keep the aforementioned priority
rules.
To this end, we denote by
\[
AV^{\rm cust}(t)=\{i\in\N: \text{ either } Q_i(t-)=1 \text{ or }\Del A_i(t)=1\}
\]
the set of customers available for routing at time $t$ and by
\[
AV^{\rm serv}(t)=\{j\in[n]: \text{ either } \Del D_j(t)=1\text{ or } B_j(t-)=0\}
\]
the set of servers available to serve at this time,  where for a real valued c\`{a}dl\`{a}g function $f$ on $[0,\infty)$, $\Delta f(s) = f(s)- f(s-)$.
The number of customers to be routed at time $t$ is given by
\[
\hat K(t)=\# AV^{\rm cust}(t)\w\# AV^{\rm serv}(t).
\]
In terms of $\hat K(t)$, one determines which customers $i$ are routed to service,
and which servers $j$ admit new customers at time $t$, according to
\[
\hat K^{\rm cust}_i(t)=\begin{cases}1 & \text{if } i\in AV^{\rm cust}(t),\ \#\{i'\in AV^{\rm cust}(t):i'\le i\}\le\hat K(t),
\\
0 & \text{otherwise},
\end{cases}
\]
\[
\hat K^{\rm serv}_j(t)=\begin{cases}1 & \text{if } j\in AV^{\rm serv}(t),\ \#\{j'\in AV^{\rm serv}(t):j'\le j\}\le\hat K(t),
\\
0 & \text{otherwise}.
\end{cases}
\]
The corresponding counting processes are given by
\[
K^{\rm cust}_i(t)=\sum_{s\le t}\hat K^{\rm cust}_i(s),
\]
\[
K^{\rm serv}_j(t)=\sum_{s\le t}\hat K^{\rm serv}_j(s).
\]

To determine $R_i$, note that reneging occurs at time $T_i+P_i$, but only on the event that
the customer is in the queue at that time.
Thus
\[
R_i(t)=\begin{cases}
1 & \text{if } t\ge T_i+P_i \text{ and } K^{\rm cust}_i(T_i+P_i)=0,
\\
0 & \text{otherwise}.
\end{cases}
\]
According to this definition, if routing of a customer to service and reneging potentially occur
at the same time, priority is given to routing.

Finally, the total queue length, number of busy servers, arrival count,
departure count, reneging count
and routing count are given, resp., by
\[
Q=\sum_iQ_i,\qquad B=\sum_jB_j, \qquad
A=\sum_iA_i,\qquad D=\sum_jD_j,
\]
\[
R=\sum_iR_i, \qquad K=\sum_iK^{\rm cust}_i=\sum_jK^{\rm serv}_j.
\]
The state of the system is the process
$\Sig=\{A_i,Q_i,K^{\rm cust}_i,R_i,B_j,K^{\rm serv}_j,D_j,Q,B,A,D,R,K\}$.

The primitive processes $\calA=(T_i,P_i)$ and $S_j$ determine the state of the system.
Proving this amounts to showing that there exists a unique solution to the set of all equations
that appear in this subsection; we skip the details of the elementary proof
of this fact. An additional important fact that we state without proof
is a causality property, namely that for any $t$, $\{\Sig(s):s\in[0,t]\}$ is measurable on
the sigma field corresponding to the primitive data up to time $t$,
namely $\s\{(T_i,P_i)1_{\{T_i\le t\}}, S_j(s), s\le t, j\in[n]\}$.

It is assumed throughout that the potential service processes $S_j$
are mutually independent, and that, moreover,
the initial data $(\{B_j(0)\}_{j\in\N}, X(0))$, the service primitive $\{S_j\}_{j\in\N}$
and the customer primitive $\calA$ are mutually independent,
for each model $\Q$ in the family of models $\calQ$ to be considered.

\subsubsection{LD scaling and performance measure}

We now consider a sequence of models indexed by $n\in\N$.
It is convenient to assume, as we will, that the primitives $S_j$ are given for all $j\in\N$,
and that for the $n$th system, one takes $S^n_j=S_j$, $j\in[n]$.
Similarly, for the arrival process, it is convenient to start with a single sequence
$\calA=(T_i,P_i)$ and obtain the arrival process for the $n$th system, $\calA^n=(T_i^n,P_i^n)$,
via $T^n_i=n^{-1}T_i$ and $P^n_i=P_i$.
The transformation of arrival times reflects acceleration of arrivals,
performed in order to keep a constant traffic intensity as $n$ increases
by balancing the increase of processing capacity due to the growing
number of servers.
The patience times however are not accelerated. This is in agreement with
literature on many server scaling at LLN and CLT regimes,
such as \cite{atar2004scheduling}, \cite{kanram10}, \cite{atakasshi}.
The superscript $n$ is attached to all processes involved in the $n$th system.

Our main interest is in the LD behavior of the reneging count $R^n$.
In the special case of Markovian model, the large time average rate of overall reneging
can be obtained by simple LLN considerations. That is,
assume that for some $\la,\mu,\theta>0$,
the rate of arrivals is given by $\la n$, the total service rate by $\mu n$,
and the per-customer reneging rate by $\theta$.
Consider an overloaded system, $\la>\mu$.
Then the reneging stabilizes the system at
an equilibrium around $xn$ for which $\la = \mu+\theta x$.
Hence the long time average reneging rate is given by $\gamma_0=\theta x
=\theta\frac{\la-\mu}{\theta}=\la-\mu$, and thus for $\gamma>\gamma_0$,
the event that the long time average reneging rate exceeds $\gamma$ is rare.

For a general model $\Q$ and an arbitrary $\gamma>0$, define the decay rate
\[
\chi(\Q,\gamma)=\limsup_{t\to\iy}\limsup_{n\to\iy}\frac{1}{tn}
\log \Q\Big(\frac{R^n(t)}{t n}>\gamma\Big),
\]
and for a collection of models $\calQ$ let
\[
\chi(\calQ,\gamma)=
\limsup_{t\to\iy}\limsup_{n\to\iy}\frac{1}{tn}
\log \sup_{\Q\in\calQ}\Q\Big(\frac{R^n(t)}{t n}>\gamma\Big).
\]

Bounds on $\chi(\calQ,\gamma)$ will be based on known
bounds on $\chi(\PP,\gamma)$, where $\PP$ stands for
the aforementioned Markovian model (that is, $M/M/n+M$), and $\gamma>\gamma_0$.

\begin{theorem}{\bf\cite{ABDW1}}
\label{th01}
Assume $\la\ge\mu$. Let
\(
C(\gamma)=\la(1-z^{-1})+\mu(1-z)-\gamma\log z
\), where
\[
z=z(\gamma)=\frac{\sqrt{\gamma^2+4\mu\la}-\gamma}{2\mu}.
\]
Then $\chi(\PP,\gam)=-C(\gam)$, for $\gamma\ge\gamma_0$.
\end{theorem}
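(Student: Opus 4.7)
The plan is to reduce the computation of $\chi(\PP,\gamma)$ to an explicit two-variable convex optimization. Given the Markovian structure of $M/M/n+M$, the rare event $\{R^n(t)/(tn)>\gamma\}$ should be optimally realized by simultaneously tilting the arrival Poisson from rate $\la n$ to rate $\la' n$ and each server's exponential rate from $\mu$ to $\mu'$, with $\la'-\mu'=\gamma$, so that in the overloaded regime the tilted net input rate is exactly absorbed by reneging. The per-$tn$ relative-entropy cost of such a tilt is
\[
L(\la',\mu') \doteq \la'\log(\la'/\la)-\la'+\la+\mu'\log(\mu'/\mu)-\mu'+\mu,
\]
and I expect $\chi(\PP,\gamma)=-\inf\{L(\la',\mu'):\la'-\mu'=\gamma\}$.

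For the lower bound I would construct a tilted measure $\tilde\PP$ with the rates above and invoke the LLN for the tilted many-server queue with reneging. Under $\tilde\PP$ the system remains overloaded ($\la'>\mu'$), all servers are asymptotically busy, and $R^n(t)/(tn)\to\la'-\mu'=\gamma$ as $n\to\infty$ followed by $t\to\infty$. The log-Radon-Nikodym derivative is explicit in $A^n(t)$, $\sum_j D^n_j(t)$ and $\sum_j\int_0^t B^n_j(s)\,ds$; the corresponding LLNs under $\tilde\PP$ give $-(tn)^{-1}\log(d\PP/d\tilde\PP)\to L(\la',\mu')$ $\tilde\PP$-a.s., and a standard truncation of the change of measure to a set where this quantity is close to its limit and $R^n/(tn)>\gamma-\eps$ yields $\chi(\PP,\gamma)\ge -L(\la',\mu')$. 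For the matching upper bound I would use exponential Markov inequalities built on the martingales associated with $A^n$ and $\sum_j D^n_j$ viewed as counting processes with intensities $\la n$ and $\mu\sum_j B^n_j$, combined with the identity $R^n=A^n-\sum_j D^n_j-(X^n-X^n(0))$ and the bound $\sum_j\int_0^t B^n_j(s)\,ds\le tn$; the dual problem collapses to $\inf L(\la',\mu')$.

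Solving the optimization is elementary: Lagrange multipliers applied to $\la'\mapsto L(\la',\la'-\gamma)$ give $\log(\la'/\la)+\log(\mu'/\mu)=0$, i.e., $\la'\mu'=\la\mu$, and combined with $\la'-\mu'=\gamma$ the positive root of the resulting quadratic in $\mu'$ is $\mu'=\mu z$ with $z$ as in the theorem statement. Then $\la'=\la/z$, and substituting into $L$ while using $\la'-\mu'=\gamma$ collapses the expression to $C(\gamma)=\la(1-z^{-1})+\mu(1-z)-\gamma\log z$.

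The main obstacle is the rigorous handling of the iterated $\limsup_t\limsup_n$: one must rule out time-varying tilts outperforming the constant-rate path (this follows from convexity of $L$ and Jensen's inequality applied to the time-averaged tilt) and provide enough ergodicity of the tilted process to justify the LLN for large $t$ after the many-server limit (immediate since the tilted chain is itself an overloaded, hence ergodic, $M/M/n+M$ model). A further delicate point is identifying $\mu'$ as the effective per-server rate, which requires that essentially all servers are busy almost all the time in overload, so that $\sum_j\int_0^t B^n_j(s)\,ds\approx tn$ both in the LLN used for the lower bound and in the exponential estimate used for the upper bound.
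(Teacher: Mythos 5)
Note first that the paper does not prove this statement: Theorem~5.1 is cited verbatim from \cite{ABDW1}, so there is no in-paper argument to compare your sketch against. What can be checked is whether your change-of-measure strategy is internally consistent, and in particular whether the two-variable optimization you set up really does reproduce $C(\gamma)$.

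The optimization is correct. Minimizing $L(\la',\mu')=\la'\log(\la'/\la)-\la'+\la+\mu'\log(\mu'/\mu)-\mu'+\mu$ over $\la'-\mu'=\gamma$ gives the stationarity condition $\la'\mu'=\la\mu$, and the positive root of $(\mu')^2+\gamma\mu'-\la\mu=0$ is $\mu'=\mu z$ with $z$ exactly as in the theorem. Substituting $\la'=\la/z$, $\mu'=\mu z$ and using $\la/z-\mu z=\gamma$, one finds $L(\la',\mu')=\la(1-z^{-1})+\mu(1-z)-\gamma\log z=C(\gamma)$. Also $z(\gamma_0)=1$, consistent with $C(\gamma_0)=0$. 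So the candidate rate function is the right one, the rate is $\theta$-free as it must be, and the Cram\'er-type structure is exactly what one should expect for the iterated limit $\limsup_t\limsup_n$.

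The genuine gap is in the upper bound, and it is precisely the point you flag but do not resolve. The exponential martingale for $A^n-\sum_j D^n_j$ produces the term $(e^{-\eta}-1)\mu\int_0^t B^n(s)\,ds$ in the exponent, and since $e^{-\eta}-1<0$ for $\eta>0$, the inequality $\int_0^t B^n\le nt$ works in the \emph{wrong} direction: it gives an upper bound in which the departure process contributes nothing, i.e., a Chernoff bound of the form $\inf_\eta[-\eta\gamma+(e^\eta-1)\la]$, which is not $-C(\gamma)$. To recover $C(\gamma)$ one must insert $\int_0^t B^n\ge(1-\eps)nt$, and to justify that, one needs to show that the complementary event $\{\int_0^t B^n<(1-\eps)nt\}\cap\{R^n(t)>\gamma nt\}$ has a strictly larger LD decay rate. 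That is itself a nontrivial sample-path LD estimate, intertwined with the reneging dynamics and the queue length process $X^n$ (which you also dropped using $X^n\ge 0$): low busyness requires low cumulative arrivals, which should also suppress reneging, but making this quantitative at the correct exponential scale is exactly the part of the argument that \cite{ABDW1} has to do carefully (and whose title, ``the reneging paradox,'' signals that the naive tilting heuristic is not uniformly reliable for these models). As written, your upper bound is a plausible outline of where one wants to go, but the step that makes the dual problem ``collapse to $\inf L$'' is not established, and the bound you would actually obtain from the stated martingale inequality is weaker than $-C(\gamma)$.
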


\subsection{Robust bounds}

\subsubsection{Robust bounds in general form}

For a collection of models $\calQ$,
the marked point process
$\calA^n=(T^n_i,P^n_i)$, with $\calA^n_t=\{(T^n_i,P^n_i):T^n_i\le t\}$,
which encodes arrival and patience processes, is assumed to satisfy the RDR bound
\begin{equation}\label{90}
\limsup_{t\to\iy}\limsup_{n\to\iy}
\frac{1}{nt}\sup_{\Q\in\calQ}R_\al(\Q\circ\calA^n|_{[0,t]}^{-1}
\|\PP\circ\calA^n|_{[0,t]}^{-1})\le r^{(1)}_\al,
\end{equation}
where $r_\al^{(1)}$ is an $\al$-dependent constant.
The probabilistic characteristics of the servers are encoded in
the service processes $S_j$, that are taken to satisfy a similar bound, uniform in $j$,
\begin{equation}\label{91}
\limsup_{t\to\iy}\frac{1}{t}\sup_{\Q\in\calQ}\sup_{j\in\N}R_\al(\Q\circ S_j|_{[0,t]}^{-1}\|\PP\circ S_j|_{[0,t]}^{-1})\le r^{(2)}_\al.
\end{equation}

\begin{theorem}\label{th4}
Assume \eqref{90} and \eqref{91}. Then we have, for every $\gam\ge\gam_0$,
the estimate
\begin{equation*}
\chi(\calQ,\gamma)\le B(\calQ,\gamma)\doteq
\inf_{\al>1}\Big[
-\frac{\al-1}{\al}C(\gamma)+(\al-1)(r_\al^{(1)}+r_\al^{(2)})\Big].
\end{equation*}
\end{theorem}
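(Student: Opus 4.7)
The plan is to apply the robust Rényi bound in the form \eqref{04} to the event $A^n_t = \{R^n(t)/(tn) > \gamma\}$, using as the underlying measurable space the one on which the full primitive data of the $n$th system lives: the marked point process $\calA^n|_{[0,t]}$ together with the $n$ independent potential service processes $S_j|_{[0,t]}$, $j \in [n]$. By the causality property stated at the end of the model description, the entire state process $\Sig^n|_{[0,t]}$ (and in particular $R^n(t)$) is a measurable function of this primitive data, so $A^n_t$ is measurable in the appropriate $\sigma$-field. Hence \eqref{04} applied with $P = \PP$ and $Q = \Q$ gives, for every $\al>1$,
\begin{equation*}
\log \Q(A^n_t) \;\le\; \frac{\al-1}{\al}\log \PP(A^n_t) \;+\; (\al-1)\,R_\al\!\left(\Q_t^{\rm prim}\,\big\|\,\PP_t^{\rm prim}\right),
\end{equation*}
where $\Q_t^{\rm prim}$ and $\PP_t^{\rm prim}$ denote the joint laws under $\Q$ and $\PP$, respectively, of $(\calA^n|_{[0,t]},\, S_1|_{[0,t]},\ldots, S_n|_{[0,t]})$.

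Next I would invoke the mutual independence of the primitives $\calA^n, S_1,\ldots, S_n$, which holds under every $\Q \in \calQ$ by the blanket independence assumption. The product structure combined with the additivity identity \eqref{03} yields
\begin{equation*}
R_\al\!\left(\Q_t^{\rm prim}\,\big\|\,\PP_t^{\rm prim}\right)
= R_\al\!\left(\Q\circ \calA^n|_{[0,t]}^{-1}\,\big\|\,\PP\circ \calA^n|_{[0,t]}^{-1}\right)
+ \sum_{j=1}^{n} R_\al\!\left(\Q\circ S_j|_{[0,t]}^{-1}\,\big\|\,\PP\circ S_j|_{[0,t]}^{-1}\right).
\end{equation*}
Taking $\sup_{\Q \in \calQ}$, bounding the supremum of a sum by the sum of suprema, dividing by $nt$ and taking $\limsup_{n\to\iy}$ followed by $\limsup_{t\to\iy}$, the first term contributes at most $r_\al^{(1)}$ by hypothesis \eqref{90}, while the second contributes at most $n \cdot \sup_{j,\Q} R_\al(\cdots)/(nt)$, which is bounded by $r_\al^{(2)}$ by \eqref{91}. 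Thus
\begin{equation*}
\limsup_{t\to\iy}\limsup_{n\to\iy} \frac{1}{nt}\,(\al-1)\sup_{\Q\in\calQ} R_\al\!\left(\Q_t^{\rm prim}\,\big\|\,\PP_t^{\rm prim}\right)
\;\le\; (\al-1)\bigl(r_\al^{(1)} + r_\al^{(2)}\bigr).
\end{equation*}

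Combining these ingredients with Theorem~\ref{th01}, which gives $\limsup_{t,n}\frac{1}{nt}\log \PP(A^n_t) \le -C(\gamma)$ for $\gamma \ge \gamma_0$, and using that $\sup_{\Q} \log \Q(A^n_t)$ satisfies the same pointwise upper bound as the one derived above, yields
\begin{equation*}
\chi(\calQ,\gamma) \;\le\; -\frac{\al-1}{\al}C(\gamma) \;+\; (\al-1)\bigl(r_\al^{(1)} + r_\al^{(2)}\bigr)
\end{equation*}
for every $\al > 1$. Taking the infimum over $\al > 1$ concludes the argument, exactly as in the general optimization remark following \eqref{091}. The only delicate point, and the one I would scrutinize most carefully, is the exchange of $\sup_\Q$ with the $\limsup$ operations: the pointwise bound on $\log\Q(A^n_t)$ via the RRB is valid for each $\Q$ and each $(n,t)$, so taking $\sup_\Q$ on both sides before taking limsups is legitimate and produces precisely the stated bound; no further uniformity beyond the assumed RDR bounds \eqref{90}--\eqref{91} is required.
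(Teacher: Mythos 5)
Your proof is correct and follows essentially the same route as the paper: establish measurability of the event with respect to the primitive processes, apply the robust R\'enyi bound \eqref{04} (or equivalently \eqref{09}), use the assumed mutual independence of $\calA^n,S_1,\ldots,S_n$ together with additivity \eqref{03} to split the divergence, bound the service-process sum by $n$ times the uniform-in-$j$ bound from \eqref{91}, invoke Theorem \ref{th01} for the reference-model decay rate, take limsups in $n$ then $t$, and optimize over $\al$. The one point you flagged as delicate — passing $\sup_\Q$ inside before taking limsups — is handled correctly, and the paper handles it the same way.
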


\noi{\bf Proof.}
Clearly, the event $(tn)^{-1}R^n(t)>\gam$ is measurable on $\sig\{\Sig^n(s):s\in[0,t]\}$,
hence on the sigma field corresponding to the primitives, $\sig\{\calA^n_s,S_j(s):s\in[0,t],j\in[n]\}$.
Hence by \eqref{09},
\begin{align*}
\sup_{\Q\in\calQ}\frac{1}{tn}\log\Q\Big(\frac{R^n(t)}{tn}>\gam\Big)
&\le \frac{\al-1}{\al}\frac{1}{tn}\log\PP\Big(\frac{R^n(t)}{tn}>\gam\Big)\\
&\quad+(\al-1)\frac{1}{tn}\sup_{\Q\in\calQ}
\Big[R_\al(\Q\circ\calA^n|_{[0,t]}^{-1}\|\PP\circ\calA^n|_{[0,t]}^{-1})
+\sum_{j=1}^nR_\al(\Q\circ S_j|_{[0,t]}^{-1}\|\PP\circ S_j|_{[0,t]}^{-1})\Big],
\end{align*}
using the assumed mutual independence of the different service processes $S_j$,
as well as their independence from $\calA^n$.
Taking the limit superior in $n$, then in $t$, and using Theorem \ref{th01}
and assumptions \eqref{90} and \eqref{91}, gives
\[
\chi(\calQ,\gam)\le-\frac{\al-1}{\al}C(\gam)+(\al-1)(r_\al^{(1)}+r_\al^{(2)}).
\]
The result follows on optimizing over $\al$.
\qed

\subsubsection{Examples}

First we provide examples where uncertainty classes are in the spirit if Theorem \ref{th3}.
In the reference model $\PP$, arrivals are Poisson$(n\la)$, patience times are exponential$(\theta)$,
and individual service rates are $\mu=1$.

\begin{example}\label{ex-3}
Consider the family of models, $\calQ_2$, corresponding to
the setting of Theorem \ref{th3}(ii), where all potential service processes
are Cox processes with stochastic intensities
$a\le \la_j(\cdot)\le b$, for some constants $0<a\le 1\le b<\iy$
and all $j\in\N$. Alternatively, (again, see Theorem \ref{th3}(ii))
the service processes are renewals
with service time distribution for which the hazard rate satisfies $a\le h_j(\cdot)\le b$
for each server $j\in\N$. As already mentioned, the potential service
processes are assumed to be mutually independent, but they are not assumed
to be identically distributed. However, the distributions associated with all servers
are assumed to lie within the same uncertainty class, namely the one determined
by the bounds $a$ and $b$.
For arrival and patience distributions, recall that $\calA^n=(T^n_i,P^n_i)$
are taken as rescaled versions of $\calA=(T_i,P_i)$. Our assumptions
are that $T_i$ is
a Cox process with stochastic intensity $a_{\rm arr}\le\frac{\la(\cdot)}{\la}\le b_{\rm arr}$,
and the distributions of the patience times $P_i$ have distributions $\psi_i(z)\varsigma(dz)$ satisfying
the bound $a_{\rm pat}\le\psi_i(\cdot)\le b_{\rm pat}$, where $\varsigma$ is the
distribution of an exponential($\theta$) RV $\varsigma(dz)=\theta e^{-\theta z}dz$.

For the potential service processes, consider Theorem \ref{th3}(ii) in the special case of no marks.
Then, for each server $j$,
\[
\sup_{\Q\in\calQ_2}R_\al(\Q\circ S_j|_{[0,t]}^{-1}\|\PP\circ S_j|_{[0,t]}^{-1})
\le
\sup_{\Q\in\calQ_2^*}R_\al(\Q\circ N|_{[0,t]}^{-1}\|\PP\circ N|_{[0,t]}^{-1}),
\]
where $\calQ_2^*$ denotes the class from Theorem \ref{th3}(ii), of all probability
measures that make $N$ a Cox process with stochastic intensity
$a\le\la(\cdot)\le b$ (taking $\la_0=1$ in \eqref{093}, in line with the assumption that $\mu=1$ under the reference model).
Taking the supremum over $j$ we obtain from Theorem \ref{th3}(ii) that
\eqref{91} holds with $r_\al^{(2)}=k_\al(a)\vee k_\al(b)$.

Next, to use the same theorem for the arrival and patience distributions,
note first that
$\hat a\le \frac{\la(s)\psi(z)}{\la} \le \hat b$,
where $\hat a=a_{\rm arr}a_{\rm pat}$ and $\hat b=b_{\rm arr}b_{\rm pat}$.
Consequently, we obtain \eqref{90} with $r_\al^{(1)}=k_\al(\hat a)\vee k_\al(\hat b)$.

Appealing to Theorem \eqref{th4}, we obtain the estimate
\[
\chi(\calQ_2,\gam)\le
\inf_{\al>1}\Big[
-\frac{\al-1}{\al}C(\gamma)+(\al-1)
(k_\al(\hat a)\vee k_\al(\hat b))\la
+(\al-1)(k_\al(a)\vee k_\al(b))\Big].
\]

As a second and third uncertainty classes, denoted by $\calQ_3$ and $\calQ_4$,
we take families of measures corresponding to Theorem \ref{th3}(iii) and \ref{th3}(iv),
respectively. In both $\calQ_3$ and $\calQ_4$,
the arrival and patience are taken as in $\calQ_2$.
As for service time distributions, in $\calQ_3$ consider Cox processes
for which the stochastic intensity satisfies the long time average constraint
\eqref{096} (with $\la_0=1$) and the constraint $a\le \la_j(\cdot)\le b$.
In $\calQ_4$, the stochastic intensity satisfies
\eqref{096} and the constraint \eqref{095} for some $\al=\al_0$ and $u$.
The bounds obtained in these cases are
\[
\chi(\calQ_3,\gam)\le
\inf_{\al>1}\Big[
-\frac{\al-1}{\al}C(\gamma)+(\al-1)
(k_\al(\hat a)\vee k_\al(\hat b))\la
+(\al-1)(pk_\al(a)+qk_\al(b))\Big],
\]
$p=\frac{b-1}{b-a}$, $q=\frac{1-a}{b-a}$, and
\[
\chi(\calQ_4,\gam)\le
\inf_{\al\in(1,\al_0)}\Big[
-\frac{\al-1}{\al}C(\gamma)+(\al-1)
(k_\al(\hat a)\vee k_\al(\hat b))\la
+\frac{1}{\al}
\Big[\Big(\bar\al_0u+1\Big)^{\frac{\al-1}{\al_0-1}}-1\Big]
\Big],
\]
where $\bar\al_0=\al_0(\al_0-1)$.

For a numerical example we take the following numerical values.
Since the reference service rates are normalized to 1,
the reference system will be overloaded in $\la>1$. We take $\la=2$.
The corresponding LLN reneging rate is $\gam_0=1$.
As bounds on intensities we take $a=\hat a=1-\del$ and $b=\hat b=1+\del$, where $\del=0.3$.

Figure \ref{fig3} (left) gives graphs of $B(\calQ_2,\gamma)$ and $B_3(\calQ_3,\gamma)$
corresponding to the families $\calQ_2$ and $\calQ_3$,
as well as $\chi(\PP,\gamma)=-C(\gamma)$ for reference (the exact decay rate under $\PP$).
In addition to the families $\calQ_2$ and $\calQ_3$, we consider families $\calQ_2'$ and $\calQ_3'$
defined analogously to $\calQ_2$ and $\calQ_3$, respectively, but where uncertainty is associated with
the service processes only, hence $r^{(1)}_\al$ is taken to be $0$. The corresponding bounds
$B(\calQ_2',\gamma)$ and $B(\calQ_3',\gamma)$ are also shown in Figure \ref{fig3} (left).
\end{example}

Whereas the above example is based on RDR bounds
for families of processes (Theorem \ref{th3}), the following is based, in addition, on
our RDR bounds for specific renewal distributions (Theorem \ref{thm1}).

\begin{example}\label{ex-4}
We consider a family, denoted $\calQ_\Gam$,
where all servers operate according to Gamma distributions.
More precisely, service time distribution for server $j$ is $\Gam(k_j,\rho_j)$,
$k_j\ge1$, $\rho_j>1$, and a subset $F\subset[1,\iy)\times(1,\iy)$ is given
for which $(k_j,\rho_j)\in F$ for all $j$.
The assumptions on the arrival and patience process, $\calA$, are as in Example \ref{ex-3}.
Then by the bound on RDR for the Gamma distribution stated in
Example \ref{ex:32}, the bound \eqref{91} is valid with
\[
r_\al^{(2)}=r_\al^{(2)}(F)\doteq\sup_{(k,\rho)\in F}r_\al^{(2)}(k,\rho),
\]
where we denote
\[
r_\al^{(2)}(k,\rho)=
\Big( \frac{\Gam(1+\alpha(k-1))}{(\Gam(k))^\alpha} \rho^{\alpha k} \Big)
^{\frac{1}{1+\alpha(k-1)}} -\alpha(\rho-1) - 1.
\]
As a consequence,
\[
\chi(\calQ_\Gam,\gamma)
\le\inf_{\al>1}\Big[-\frac{\al-1}{\al}C(\gamma)
+(\al-1)(k_\al(\hat a)\vee k_\al(\hat b))\la
+(\al-1)r_\al^{(2)}(F)\Big].
\]

Figure \ref{fig3} (right) gives graphs of $B(\calQ_\Gam)$
for $\calQ_\Gam$ for two parameter ranges, namely $(k,\rho)\in[1,1.1]\times[1,1.1]$ and
$(k,\rho)\in[1,1.5]\times[1,1.5]$.
The parameters $\la$, $\hat a$, $\hat b$ are taken to be $\la=2$
$\hat a=1-\del$, $\hat b=1+\del$, where $\del=0.3$.

Finally, we also consider $\calQ_\Gamma'$
defined as $\calQ_\Gamma$ but where uncertainty is associated with
the service processes only ($r^{(1)}_\al=0$).
The corresponding bounds are also shown in Figure \ref{fig3} (right),
with the same ranges of parameters $(k,\rho)$.

Once again, $\chi(\PP,\gamma)$ is also plotted for reference.
\end{example}

\begin{figure}
\center{
\includegraphics[width=0.48\textwidth,height=0.45\textwidth]{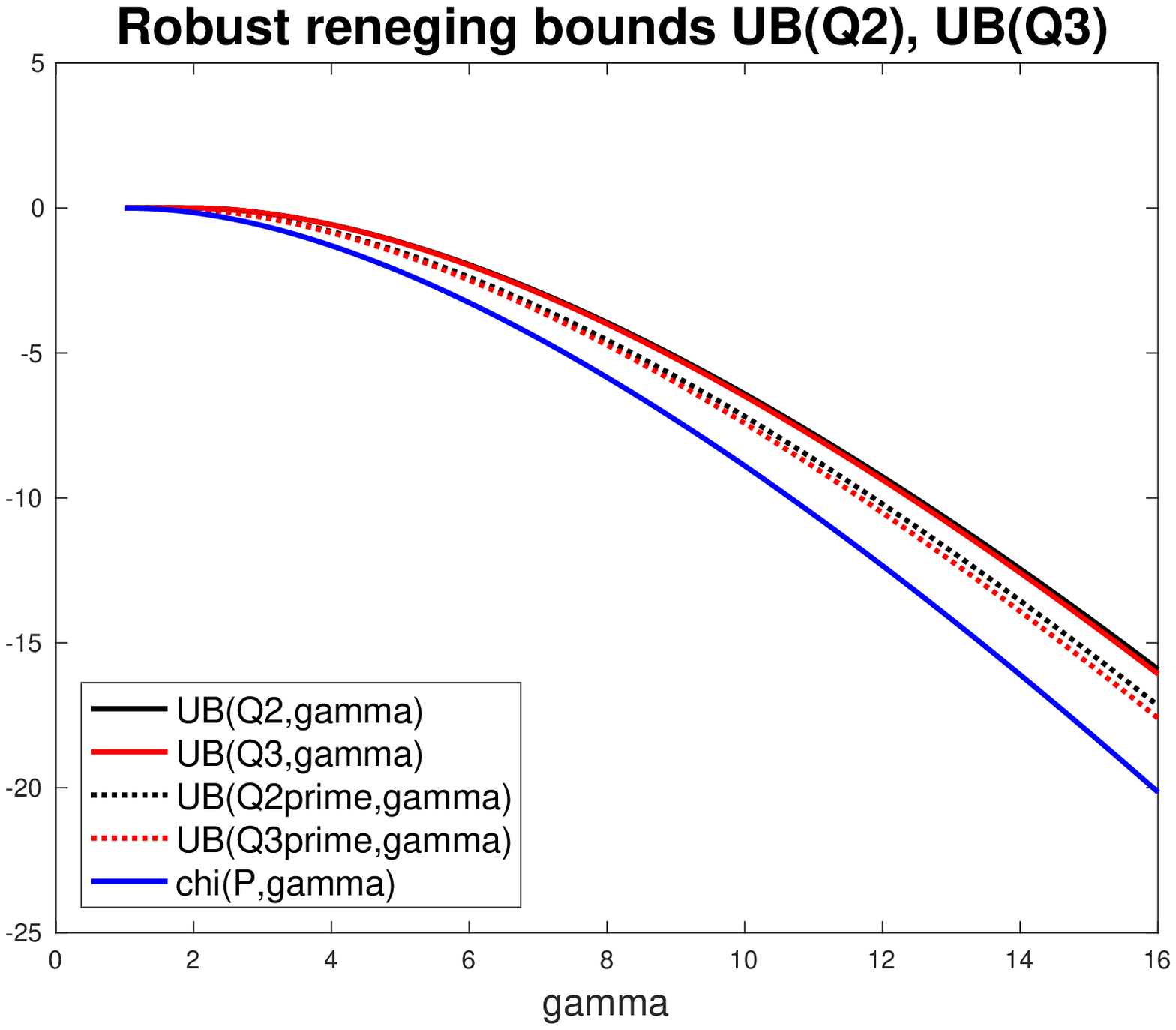}
\includegraphics[width=0.48\textwidth,height=0.45\textwidth]{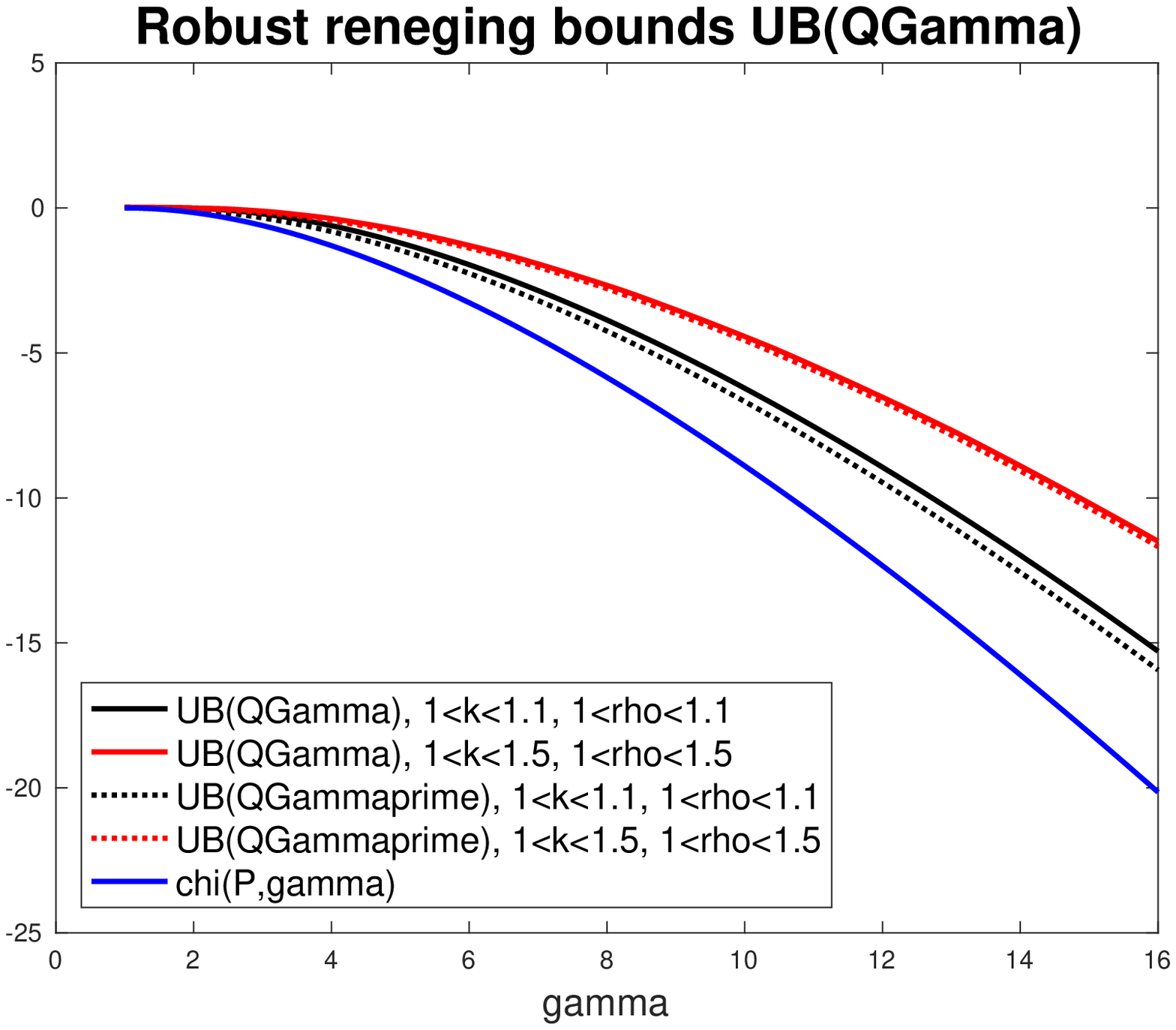}
}
\caption{Robust bounds for the reneging problem}
\label{fig3}
\end{figure}

\section{Concluding remarks}\label{sec6}

The techniques developed in this paper are not limited to queueing models.
The basic bound \eqref{091}
can be used in far broader dynamical system settings.
In the most general terms, its usefulness relies on the ability
to provide (1) a LD estimate
under some reference measure $P$ (the first term on the RHS of \eqref{091})
and (2) a computation of, or an effective upper bound on,
the RDR for a family of models of interest (second term on the RHS of \eqref{091}).
For example, if the dynamical systems
are driven by point processes (like in queueing applications),
the relevant RDR need not correspond to renewal versus Poisson like in this paper,
but between families of point processes relevant to the application.
Such RDR estimates need to be developed.

The main viewpoint presented in this paper was to consider
a reference model for which
computation is possible, and a family of models that need not be tractable.
A different perspective, initiated in \cite{dupkatpanrey}, is to use
these bounds for sensitivity analysis of rare event probabilities.
This paper introduces new gradient based
sensitivity indices that are meaningful at the large deviations scale, and
develops sensitivity bounds which
do not require a rare event sampler for each rare event.
This quality is closely related to the fact that in \eqref{091}
the difference in performance under two measures is bounded solely in terms
of the \renyi divergence, and does not depend on the rare event $A$.
This method of \cite{dupkatpanrey} arguably has an advantage over more
traditional approaches of direct statistical estimation of rare event sensitivities.


Finally, the robust bounds for RS control developed in \S\ref{sec41}
are valid in far greater generality than for queueing applications, as we
have indeed emphasized in that section.
As long as the function $F$ in Theorem \ref{thm2} can be computed (or estimated) for a given design model $P$ and a specified family of models $\calQ$,  the
robust bounds established in this result are available.

\appendix
\section{Appendix}
\manualnames{A}
\subsection{Proofs of results from \S \ref{sec31}}\label{sec-a1}

\noi{\bf Proof of Theorem \ref{th3}.}
(i)
For $T>0$, the Radon-Nikodym (RN) derivative of $Q^N_T$ w.r.t.\ $P^N_T$ is given by (see \cite[Theorem 2.31]{karr})
\begin{equation}\label{202}
\La_T=
e^{-\int_{[0,T]\times S}(\la(t,z)-\la_0) \mar(dz)\,dt+\int_{[0,T]\times S}\log\frac{\la(t,z)}{\la_0} N(dt\, dz)}.
\end{equation}
Raising this expression to the power $\al$ gives
\begin{align}\label{203}
\La_T^\al \notag
&=e^{-\int_{[0,T]\times S}(\la(t,z)^\al\la_0^{1-\al}-\la_0)\mar(dz) dt+\int_{[0,T]\times S}\log\frac{\la(t,z)^{\al}\la_0^{1-\al}}{\la_0}N(dt\,dz)}
e^{\int_{[0,T]\times S}(\la(t,z)^\al\la_0^{1-\al}-\al\la(t,z)+(\al-1)\la_0)\mar(dz)dt}
\\
&=M_Te^{\int_{[0,T]\times S}\la_0\al(\al-1)k_\al(\frac{\la(t,z)}{\la_0})\mar(dz)dt},
\end{align}
where the process
\[
M_t=e^{-\int_{[0,t]\times S}(\la(s,z)^\al\la_0^{1-\al}-\la_0)\mar(dz)ds+\int_{[0,t]\times S}\log\frac{\la(s,z)^{\al}\la_0^{1-\al}}{\la_0}N(ds\,dz)}, 0 \le t\le T
\]
is a $P$-(local) martingale. By the hypothesis on $\la(\cdot)$,
for all $Q\in\calQ_1$ and all $T$, $\int_{[0,T]\times S} k_\al(\la(t,z)/\la_0) \mar(dz)dt \le T(u+v(T))$.
Hence $R_\al(Q_T^N\|P_T^N)\le(u+v(T))T\la_0$.
Consequently, by the definition of the RDR, and since $v(T) \to 0$, $r_\al(\calQ_1\|P)$  is bounded above by $u\la_0$.
Equality follows on taking $\la(t,z)$ to be the constant $\la_1$ for which
$k_\al(\la_1/\la_0)=u$.

\skp

(ii) Fix $Q\in\calQ_2$. By the convexity of $k_\al$, the property \eqref{093} implies
that for all $t$, $k_\al(\la(t,z)/\la_0)\le k_\al(a)\vee k_\al(b)$, which using \eqref{203} yields
\begin{equation*}
R_\al(Q_T\|P_T)\le[k_\al(a)\vee k_\al(b)]\la_0T.
\end{equation*}
This shows that $r_\al(\calQ_2\|P)\le (k_\al(a)\vee k_\al(b))\la_0$.  The equality in \eqref{201} now follows
by taking $\la(t,z)=a\la_0$ or $\la(t,z)=b\la_0$.

As for the claim regarding delayed renewal processes, it is well known (see e.g. \cite[Exercise 2.14]{karr})
that the RN derivative is given by
\begin{equation}\label{eq:eq822}
\La_T=e^{-\int_{[0,T]\times S}(h(V_t)\psi(z)-\la_0)\mar(dz)dt+\int_{[0,T]\times S}\log \frac{h(V_{t-})\psi(z)}{\la_0}N(dt\,dz)},
\end{equation}
where $V_t=t-\tau_{N_t(S)}$ is the backward recurrence time.
Hence the process $\la(t,z) \doteq h(V_{t-})\psi(z)$ is the  intensity process for the marked point process
$N$, satisfying the hypothesis $a\le\frac{\la(\cdot)}{\la_0}\le b$. The result now follows from the first part of (ii).

\skp

(iii)
Fix $Q\in\calQ_3$. Since $k_\alpha(x)$ is convex, we have
$$k_\alpha(x) \le \frac{x-a}{b-a} k_\alpha(b) + \frac{b-x}{b-a} k_\alpha(a).$$
Therefore
\begin{align*}
	\frac{1}{T} \int_{[0,T]\times S}k_\al\left(\frac{\la(t,z)}{\la_0}\right)\mar(dz)dt & \le \frac{1}{T} \int_{[0,T]\times S} \left( \frac{\frac{\la(t,z)}{\la_0}-a}{b-a} k_\alpha(b) + \frac{b-\frac{\la(t,z)}{\la_0}}{b-a} k_\alpha(a) \right) \mar(dz)dt \\
	& \le \frac{\frac{\la_0+v(T)}{\la_0}-a}{b-a} k_\alpha(b) + \frac{b-\frac{\la_0-v(T)}{\la_0}}{b-a} k_\alpha(a) \\
	& = pk_\alpha(a)+qk_\alpha(b) + \frac{v(T)(k_\alpha(a)+k_\alpha(b))}{\lambda_0(b-a)}
\end{align*}
by \eqref{096}.
It then follows from \eqref{203} and the $P$-(local) martingale property of $M$ that
$$\frac{1}{T}\frac{1}{\al(\al-1)}\log E_P[\La_T^\al]\le (pk_\alpha(a)+qk_\alpha(b))\lambda_0 + \frac{v(T)(k_\alpha(a)+k_\alpha(b))}{b-a}.$$
Taking the supremum over $Q \in \calQ_3$ and the limsup as $T \to \infty$, it follows that $r_\al(\calQ_3\|P)\le (pk_\al(a)+qk_\al(b))\la_0$.
To obtain the asserted equality, take, for each $T$,
deterministic $\la(\cdot)$ that takes the value $a\la_0$ (resp., $b\la_0$)
on $[0,pT)$ (resp., $[pT,T]$).

\skp

(iv)
Fix $1<\al<\al_0$.
Let $Q\in\calQ_4$ and let $\bar\la(\cdot)$ be the
normalized intensity process $\bar\la(\cdot)=\la(\cdot)/\la_0$.
Then for all $T>0$,
\begin{equation*}
\frac{1}{T}\int_{[0,T]\times S}k_{\al_0}(\bar\la(t,z))\mar(dz) dt\le u+v(T),
\qquad
\Big|\frac{1}{T}\int_{[0,T]\times S}\bar\la(t,z)\mar(dz)dt-1\Big|\le\bar v(T)=\frac{v(T)}{\la_0}.
\end{equation*}
Denote by $\calG$ the collection of all (deterministic)
maps $f:\R_+\times S \to \R_+$ such that for all $T\in (0,\infty)$
\begin{equation}\label{calG}
    \frac{1}{T}\int_{[0,T]\times S}k_{\al_0}(f(t,z))\mar(dz) dt\le u+v(T),
\qquad
\Big|\frac{1}{T}\int_{[0,T]\times S} f(t,y)\mar(dy)dt-1\Big|\le\bar v(T)
\end{equation}
and let
\[
U_T=\sup\Big\{\int_{[0,T]\times S} k_\al(f(t,z))\mar(dz)dt: f \in \calG\Big\}.
\]
Since the normalized {\it stochastic} intensity $\bar\la(\cdot)$ is in $\calG$ a.s.,
$\int_{[0,T]\times S} k_\al(\bar\la(t,z))\mar(dz)dt\le U_T$, for every $T$.
Consequently, by \eqref{203}, $\La_T^\al\le M_Te^{\la_0\al(\al-1)U_T}$, for every $T$.
Since $M$ is a nonnegative local martingale,
\begin{equation}\label{092}
\frac{1}{T}\frac{1}{\al(\al-1)}\log E_P[\La_T^\al]\le\frac{\la_0U_T}{T}.
\end{equation}

We now compute $U_T$.
%
To this end, for $f\in\calG$, let the measure $\mu=\mu_T$
be the corresponding empirical measure on $[0,T]\times S$, namely,
\[
\mu(B)=\frac{1}{T}\int_{[0,T]\times S}1_{\{f(t,z)\in B\}}\mar(dz)dt,
\qquad B \in \calB([0,\infty)).
\]
Let the $p$th moment of $\mu$ be denoted by $m_p(\mu)=\int_{[0,\iy)}x^pd\mu(x)$.
Then by \eqref{calG}, $\lan k_{\al_0},\mu\ran \doteq \int k_{\al_0} d\mu \le u+v(T)$ and $|m_1(\mu)-1|\le \bar v(T)$.
The computation proceeds in two steps.
First we solve the problem of maximizing $m_\al(\mu)$ under the constraints
that $m_1(\mu)$ and $m_{\al_0}(\mu)$ are given.
Then we translate it into the problem of
maximizing $\int k_\al(x)d\mu(x)$ subject to the constraints \eqref{calG}.

Let $a,b,k,l$ be positive constants satisfying $a+b=\al$, $ka=1$, $lb=\al_0$ and $k^{-1}+l^{-1}=1$.
Using H\"older inequality,
\begin{align*}
m_\al(\mu)=\int x^\al d\mu(x) = \int x^{a}x^bd\mu(x)\le m_1(\mu)^{1/k}m_{\al_0}(\mu)^{1/l}.
\end{align*}
Solving for $a,b,k,l$ gives $k=\frac{\al_0-1}{\al_0-\al}$, $l=\frac{\al_0-1}{\al-1}$
(and $a=k^{-1}$, $b=\al_0l^{-1}$).
Moreover, the inequality is tight, specifically
\begin{equation}\label{191}
\mu(dx)=p\del_0(dx)+q\del_c(dx),
\end{equation}
satisfies it with equality,
with $1-p=q=m_1C^{-\frac{1}{\al_0-1}}$ and $c=C^{\frac{1}{\al_0-1}}$, where $C=\frac{m_{\al_0}}{m_1}$
(note: using the inequality $m_1^{\al_0}\le m_{\al_0}$ it is easy to check that $q\le 1$).

Next, recalling the notation $\bar\al=\al(\al-1)$ and $\bar\al_0=\al_0(\al_0-1)$,
\begin{align*}
\lan k_{\al},\mu\ran &= \frac{1}{\bar\al}(m_\al(\mu)-\al m_1(\mu)+\al-1)
\\ &\le\frac{1}{\bar\al}\Big(m_1(\mu)^{1/k}m_{\al_0}(\mu)^{1/l}-\al m_1(\mu)+\al-1\Big)
\\ &=\frac{1}{\bar\al}\Big[m_1(\mu)^{1/k}
\Big(\bar\al_0\lan k_{\al_0},\mu\ran+\al_0m_1(\mu)-\al_0+1\Big)^{1/l}
-\al m_1(\mu)+\al-1\Big].
\end{align*}
We now use the fact that $v\in\calV_0$, and the assumed bounds on $m_1(\mu)$
and $\lan k_{\al_0},\mu\ran$. We obtain
\begin{align*}
\lan k_{\al},\mu\ran
\le\frac{1}{\bar\al}\Big[\Big(\bar\al_0 u +1\Big)^{1/l}-1\Big] +\tilde v(T),
\end{align*}
for suitable $\tilde v\in\calV_0$, which depends on the parameter but not on $\mu$.
Combining with \eqref{092},
\[
\frac{1}{T}\frac{1}{\al(\al-1)}\log E[\La_T^\al]\le\frac{\la_0}{\bar\al}
\Big[\Big(\bar\al_0u+1\Big)^{\frac{\al-1}{\al_0-1}}-1\Big]+\la_0\tilde v(T).
\]
Taking supremum over $Q\in\calQ_4$ and the limsup as $T\to\iy$ gives
\[
r_\al(\calQ_4\|P)\le\frac{\la_0}{\bar\al}
\Big[\Big(\bar\al_0u+1\Big)^{\frac{\al-1}{\al_0-1}}-1\Big].
\]
Finally, equality is obtained by selecting, for each $T$, a deterministic
$\la(\cdot)$ that agrees with \eqref{191} in the sense that the empirical measure
$\mu$ corresponding to $\bar\la=\la/\la_0$ is given by \eqref{191}.
\qed

\subsection{Proofs of results from \S \ref{sec32}}\label{sec-a2}

Before presenting the proof of Theorem \ref{thm1}, we state and prove the following lemma.
Recall the notation $P^N_t$ and $Q^N_t$ from Section \ref{sec31}.
Let
$0= \tau_0< \tau_1 < \tau_2 < \cdots$ denote the occurrence times of the point process $N$
and let  for $i \in \N$, $\Del_i=\tau_i-\tau_{i-1}$.
Write $N_t$ for $N_t(S)$ for short when there is no ambiguity.

\begin{lemma}\label{lem:hzero}
	Assume that $\bar H\doteq\sup_{x\in \R_+} H(x)<\iy$.
	Also suppose that
		$c(\al) \doteq \int_S (\psi^{\al}(z)-1)\mar(dz)<\infty$.
	Then for every $\alpha > 1$,
	\begin{equation*}
	 	\frac{1}{t}R_{\alpha}(Q_t^N\|P_t^N) = \frac{1}{\al(\al-1)t}\log E_P[e^{\al\sum_{i=1}^{N_t}H(\Del_i)}]+ \frac{c(\al)}{\al(\al-1)} + o_t(1)
	\end{equation*}
	as $t\to \infty$.
\end{lemma}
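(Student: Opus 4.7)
The plan is to expand the R\'enyi divergence by computing $\Lambda_t \doteq dQ_t^N/dP_t^N$ in closed form, and then to show that the mark contribution separates cleanly from the arrival contribution up to a boundary error of order $O(1)$. First I would observe that, under both $P$ and $Q$, the marks are conditionally iid given the jump instants---with laws $\varsigma$ and $\psi\varsigma$ respectively---and independent of these instants. Comparing the renewal likelihood $\prod g(\Delta_i) \bar G(r(t))$ on $[0,t]$, where $r(t) \doteq t - \tau_{N_t}$ and $\bar G(s) \doteq \int_s^\infty g(u)\,du$, with the Poisson rate-$1$ likelihood $e^{-t}$, and invoking the identity $H(x) = x + \log g(x)$ from \eqref{eq:H}, I would arrive at
\[
\Lambda_t = e^{r(t)} \bar G(r(t)) \exp\Big(\sum_{i=1}^{N_t} H(\Delta_i)\Big) \prod_{i=1}^{N_t} \psi(Z_i).
\]

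Raising to the $\alpha$-th power and conditioning on the arrival $\sigma$-field---using the iid marks under $P$ with $E_P[\psi(Z_i)^\alpha] = \int \psi^\alpha\,d\varsigma = 1 + c(\alpha)$---I would obtain
\[
E_P[\Lambda_t^\alpha] = E_P\Big[e^{\alpha r(t)} \bar G(r(t))^\alpha\, (1+c(\alpha))^{N_t}\, \exp\Big(\alpha \sum_{i=1}^{N_t} H(\Delta_i)\Big)\Big].
\]
The boundary factor is controlled via $\bar H < \infty$: since $g(u) = e^{H(u)-u} \le e^{\bar H-u}$, one has $\bar G(s) \le e^{\bar H-s}$ and hence $e^{r(t)}\bar G(r(t)) \le e^{\bar H}$ pointwise, so $e^{\alpha r(t)}\bar G(r(t))^\alpha \le e^{\alpha \bar H}$ is an $O(1)$ multiplicative factor that drops into the $o(t)$ remainder after taking logs. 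For a matching lower bound, one notes that $r(t)$ has limiting distribution $\operatorname{Exp}(1)$ under $P$, so the set $\{r(t) \le M\}$ carries essentially all of the exponential mass for $M$ large, and on this set the boundary terms are bounded below by strictly positive constants.

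The main obstacle is the analysis of the mark factor $(1+c(\alpha))^{N_t} = \exp\bigl(N_t \log(1+c(\alpha))\bigr)$, which must be shown to contribute exactly $c(\alpha)\,t + o(t)$ to the log of the expectation. The underlying intuition is that $N_t/t \to 1$ almost surely under $P$, so $e^{N_t \log(1+c(\alpha))} = e^{c(\alpha) t + o(t)}$ on a set of asymptotically full mass, but turning this into an exponential-moment statement requires care since $N_t$ is correlated with $\sum_{i=1}^{N_t} H(\Delta_i)$. The plan is to apply the change of measure $(1+c(\alpha))^{N_t} = e^{c(\alpha)t}\, (d\tilde P/dP)|_{\calF_t}$, where $\tilde P$ is the Poisson law of rate $1+c(\alpha)$, and then invoke a Tauberian/Laplace-transform analysis---paralleling the manipulations behind the quantities $G_\alpha^{(i)}(\theta)$ in Theorem~\ref{thm1}---to show that the resulting expectation under $\tilde P$ has the same logarithmic growth rate as $E_P[\exp(\alpha \sum H(\Delta_i))]$ up to $o(t)$. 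Controlling the interaction between the tilt $(1+c(\alpha))^{N_t}$ and the renewal functional, and thereby ruling out a shift of the growth rate by more than $c(\alpha)$, will rely crucially on $\bar H < \infty$, which keeps $e^{\alpha H(\Delta_i)}$ bounded and prevents pathological correlations from dominating.
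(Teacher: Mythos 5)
Your expansion of $\Lambda_t$ with the boundary factor $e^{r(t)}\bar G(r(t))$, and its control via $g(u)\le e^{\bar H - u}$, are correct and essentially equivalent in spirit to the paper's device, which instead evaluates the likelihood at the stopping time $\eta_t=\tau_{N_t+1}$, where $\Lambda_{\eta_t}=\exp\{\sum_{i=1}^{N_t+1}[H(\Delta_i)+\log\psi(\xi_i)]\}$ has no boundary term, and then uses the data-processing inequality together with conditional Jensen ($\Lambda_t^\alpha\le E_P[\Lambda_{\eta_t}^\alpha\mid\calF_t]$) to control $R_\alpha(Q_t^N\|P_t^N)$ by $\log E_P[\Lambda_{\eta_t}^\alpha]$. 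Both routes leave you at the same place.

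The genuine gap is the final step. After conditioning on the jump instants you have $E_P[\Lambda_t^\alpha]=E_P\big[(\text{boundary})\cdot(1+c(\alpha))^{N_t}\,e^{\alpha\sum_{i\le N_t}H(\Delta_i)}\big]$ and you must show that $\tfrac1t\log$ of this splits off exactly the constant $c(\alpha)$. The change-of-measure plan---replacing $(1+c(\alpha))^{N_t}$ by $e^{c(\alpha)t}\,d\tilde P/dP$ for $\tilde P$ the rate-$(1+c(\alpha))$ Poisson and arguing the $\tilde P$-expectation of $e^{\alpha\sum H(\Delta_i)}$ has the same growth rate as the $P$-expectation---is not correct: under $\tilde P$ the inter-arrival times are iid $\mathrm{Exp}(1+c(\alpha))$, so the one-step transform $E_{\tilde P}e^{\alpha H(\Delta_1)}$ differs from $\gamma(\alpha)=E_Pe^{\alpha H(\Delta_1)}$, and the exponential growth rate shifts accordingly. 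Boundedness of $H$ does not prevent this. For a concrete check, take $\pi=\mathrm{Exp}(\rho)$ with $\rho>1$, so $H(x)=(1-\rho)x+\log\rho$ and $\bar H=\log\rho<\infty$: a direct calculation gives $\tfrac1t\log E_P[\Lambda_t^\alpha]=\rho^\alpha(1+c(\alpha))-1-\alpha(\rho-1)$, whereas an additive split would predict $\rho^\alpha-1-\alpha(\rho-1)+c(\alpha)$; the discrepancy $c(\alpha)(\rho^\alpha-1)$ is nonzero whenever $\rho\ne 1$ and $c(\alpha)\ne 0$. In short, the correlation between $N_t$ (which drives the mark factor) and $\sum_{i\le N_t}H(\Delta_i)$ cannot be decoupled at the exponential scale by a bare tilt of the jump rate. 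You should also be aware that the paper's proof asserts the same multiplicative decomposition with only a one-line justification ("marks are independent of jump instants and $H$ is bounded from above"); the example above indicates that step also requires additional argument when $c(\alpha)\ne 0$, though in the paper's worked examples the marks are degenerate (so $c(\alpha)=0$) and the issue does not arise.
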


\begin{proof}
	For fixed $t\ge 0$, let  $\eta_t \doteq \tau_{N_t+1} = \inf \{ s > t : N_s > N_t \}$. Then $\eta_t$ is a $\{\calF_s\}_{s\ge 0}$-stopping time.
Recall the notation $\La_t = \frac{dQ_t^N}{dP_t^N}$ and the expression \eqref{eq:eq822}.
	By the optional sampling theorem it follows that
	$E_P(\La_{\eta_t})=1$ for every $t\ge 0$ and for every $s\ge 0$ and $A \in \calF_{s \wedge \eta_t}$
	$$E_P(1_A \La_{\eta_t}) = E_P(1_A \La_{s \wedge  \eta_t }) = E_P(1_A \La_{s}) = E_Q(1_A).$$
	By a monotone class argument we now have that, with
	 $K_s \doteq N_{s \wedge \eta_t}(S)$ for $s \ge 0$ and $\Gmc \doteq \sigma \{ K_s : s \ge 0\}$,
	\begin{equation*}
		E_P [1_A \La_{\eta_t}] = E_Q [1_A], \qquad\forall A \in \Gmc.
	\end{equation*}
	Since  $\sigma\{N_s : 0 \le s \le t\}$ is contained in $\sigma\{ K_s : s \ge 0\}$, by the data processing inequality
	\cite[Theorem 1.24 and Corollary 1.29]{lievaj} and \cite[Sec II]{ervhar}
	we have
	\begin{align*}
		R_\al(Q_t^N\|P_t^N)&  = R_\alpha(Q \circ N_{[0,t]}^{-1} \| P \circ N_{[0,t]}^{-1}) \\
		&\le R_\alpha(Q \circ \{ K_s : s \ge 0\}^{-1} \| P \circ \{ K_s : s \ge 0\}^{-1}) \le \frac{\log E_P [\La^\alpha_{\eta_t}]}{\alpha(\alpha-1)}.
	\end{align*}
	Denote by $\{\xi_i\}$ the sequence of marks associated with the point process. 

	Using the expression of $\La_t$ in \eqref{eq:eq822} and the definition of $H$ in \eqref{eq:H}, we have
	\begin{align*}
		\La_{\eta_t}^\alpha & = \exp \Big\{ \al \sum_{i=1}^{N_t+1} \Big( \int_{\tau_{i-1}}^{\tau_i} (1-h(V_s))\,ds + \log h(\tau_i-\tau_{i-1}) + \log \psi(\xi_i) \Big) \Big\}\\
		& = \exp\Big\{\al \sum_{i=1}^{N_t+1} H(\Del_i)\Big\}\exp\Big\{\al \sum_{i=1}^{N_t+1} \log \psi(\xi_i)\Big\} , \; t\ge 0.
	\end{align*}
		Since marks are independent of jump instants and $H$
 is bounded from above, we have 
 $$
 \frac{1}{t} \log  E_P \La_{\eta_t}^\alpha = \frac{1}{t} \log E_P[e^{\al\sum_{i=1}^{N_t}H(\Del_i)}] + \frac{1}{t} \log E_P[e^{\al\sum_{i=1}^{N_t+1}\log \psi(\xi_i)}]
 +o_t(1).$$
 Also, by standard Laplace transform formulas (see e.g. \cite[Example 1.16]{karr})
 $$
 \frac{1}{t} \log E_P[e^{\al\sum_{i=1}^{N_t+1}\log \psi(\xi_i)}] = c(\al) + o_t(1).$$
 The result follows. \hfill \qed
\end{proof}

\skp

\noi{\bf Proof of Theorem \ref{thm1}.}
(a) From Lemma \ref{lem:hzero}
\begin{equation}
	\label{eq:thm1}
 	\frac{1}{t}R_{\alpha}(Q_t^N\|P_t^N)= \frac{1}{\al(\al-1)t}\log E_P[e^{\al\sum_{i=1}^{N_t}H(\Del_i)}] + \frac{c(\al)}{\al(\al-1)} + o_t(1).
\end{equation}

Bounding $H(\Del_i)$ by $\bar H$, we have $E_P[e^{\al\sum_{i=1}^{N_t}H(\Del_i)}]\le E_P[e^{\al\bar H{N_t}}]
=e^{t(e^{\al\bar H}-1)}$ and therefore
\begin{equation*}
	\limsup_t \frac{1}{t}R_{\alpha}(Q_t^N\|P_t^N)\le\frac{e^{\al \bar H}-1 + c(\al)}{\al(\al-1)}.
\end{equation*}
This gives the  bound \eqref{eq:rough_bd}.
%
%
%

For parts $(b)$, $(c)$ and $(d)$, we will need  a more careful analysis of $E_P[e^{\al\sum_{i=1}^{N_t}H(\Del_i)}]$, under different assumptions. 

Fix  $0<c_0<c_1$ and write
\begin{equation}\label{10}
E_P[e^{\al\sum_{i=1}^{N_t}H(\Del_i)}] =\sum_{k=0}^\iy E_P[1_{\{N_t=k\}}e^{\al\sum_{i=1}^kH(\Del_i)}] =Q_0(t)+Q_1(t)+Q_2(t),
\end{equation}
where
\begin{align*}
Q_0(t)
&=\sum_{k\le c_0t} E_P[1_{\{N_t=k\}}e^{\al\sum_{i=1}^kH(\Del_i)}],\\
Q_1(t)&=\sum_{k\ge c_1t}E_P[1_{\{N_t=k\}}e^{\al\sum_{i=1}^kH(\Del_i)}]\\
Q_2(t)&=\sum_{c_0t<k<c_1t}E_P[1_{\{N_t=k\}}e^{\al\sum_{i=1}^kH(\Del_i)}].
\end{align*}

Using the bound  $Q_0(t)\le e^{\al\bar H c_0t}$, we have
\begin{equation}\label{11}
\limsup_t\frac{1}{t}\log Q_0(t)\le\al\bar Hc_0.
\end{equation}
For bounding $Q_1(t)$, write
\begin{equation*}
Q_1(t)\le\sum_{k\ge c_1t}e^{\al\bar H k}e^{-t}\frac{t^k}{k!}\le c_{\rm st}e^{-t}\sum_{k\ge c_1t}e^{\al\bar H k}t^k e^k k^{-k-\frac{1}{2}},
\end{equation*}
where Stirling's approximation is used, and $c_{\rm st}$ is a universal constant.
Using $t/k\le 1/c_1$ for the summands in the above display, we have
\begin{align*}
Q_1(t)
&\le c_{\rm st}e^{-t}\sum_{k\ge c_1t}e^{\al\bar H k}e^k c_1^{-k}.
\end{align*}
For $c_1 \ge 2e^{\al\bar H+1}$, we have $c_1^{-1}e^{\al\bar H+1}<\frac{1}{2}$ and
$Q_1(t)\le c_{\rm st}$. Hence
\begin{equation}
  \label{9}
  \limsup_t\frac{1}{t}\log Q_1(t)\le 0, \quad \forall c_1 \ge 2e^{\al\bar H+1}.
\end{equation}

We now estimate $Q_2(t)$, using different approaches under different assumptions in parts $(b)$, $(c)$ and $(d)$.

(b) Note that
\begin{align*}
	\frac{1}{t} \log Q_2(t) & \le \frac{1}{t} \log \Big( ((c_1-c_0)t+1) \max_{k\in[c_0t,c_1t]}
	E_P[1_{\{N_t=k\}}e^{\al\sum_{i=1}^kH(\Del_i)}] \Big) \\
	& = o_t(1) + \frac{1}{t} \log\max_{k\in[c_0t,c_1t]}
	E_P[1_{\{N_t=k\}}e^{\al\sum_{i=1}^kH(\Del_i)}].
\end{align*}
Recall that $\gamma(s) \doteq E_P e^{sH(\Del_1)}$ for $s \in \Rmb$.
Let $p>0$ and $q>0$ be such that \ $1/p+1/q=1$. 
Then for each $k\in[c_0t,c_1t]$,
\begin{align*}
E_P[1_{\{N_t=k\}}e^{\al\sum_{i=1}^kH(\Del_i)}]
&\le P(N_t=k)^{1/p}\gamma(q\al)^{k/q}
\\ &=
(e^{-t}t^k/k!)^{1/p}\gamma(q\al)^{k/q}
\\ &\le
(e^{-t}t^k c_{\rm st}e^kk^{-k-\frac{1}{2}})^{1/p}\gamma(q\al)^{k/q}.
\end{align*}
Letting $\theta \doteq k/t$
\begin{align*}
  \frac{1}{t}\log E_P[1_{\{N_t=k\}}e^{\al\sum_{i=1}^kH(\Del_i)}]
&\le
-\frac{1}{p}+\frac{\theta}{p}\log t+\frac{\theta}{p}-\frac{\theta}{p}\log(\theta t)
+\frac{\theta}{q}\log\gamma(q\al)+o_t(1)
\\ &=
\frac{\theta-1}{p}-\frac{\theta}{p}\log\theta
+\frac{\theta}{q}\log\gamma(q\al)+o_t(1).
\end{align*}
We now maximize the sum of the first three terms on the last line  over $\theta$ (with $p$ and $q$  fixed).
The maximum is attained at $\theta=\gamma(q\al)^{p/q}$.
If we plug in this value of $\theta$ we obtain that the maximum is given by
\[
\frac{\gamma(q\al)^{p/q}-1}{p} = \hat\gamma(p,q,\al).
\]
As a result,
\[
\limsup_t\frac{1}{t}\log Q_2(t)\le\inf_{p,q>1:p^{-1}+q^{-1}=1}\hat\gamma(p,q,\al) = G_\alpha^{(1)}.
\]
Combining this with the bounds \eqref{eq:thm1}, \eqref{10}, \eqref{11} and \eqref{9} gives
\begin{align*}
r_\al^N(Q\|P)
&\le \left[\frac{\bar H c_0}{\al-1}
\vee 0
\vee\frac{G_\alpha^{(1)}}{\al(\al-1)}\right] + \frac{c(\al)}{\al(\al-1)}.
\end{align*}
Sending $c_0\to 0$ and $c_1\to\iy$ gives \eqref{eq:renew_1}.

(c)
Given $\eps>0$ let $c_0=\theta_0<\theta_1<\cdots<\theta_J=c_1$
be a finite partition of $[c_0,c_1]$ satisfying $\theta_j-\theta_{j-1} = \eps$ for all $j\le J$.
Then
\begin{equation}
	\label{eq:Q2_Q2j}
	\limsup_t\frac{1}{t}\log Q_2(t)\le\max_{1\le j\le J}\limsup_t\frac{1}{t}\log Q_2^j(t),
\end{equation}
where
\[
Q_2^j(t)=E_P[1_{\{\theta_{j-1}t\le N_t < \theta_jt\}}e^{\al\sum_{i=1}^{N_t}H(\Del_i)}].
\]
Fix $j\le J$. Denote $n=\lce\theta_{j-1}t\rce$.
Let $S_n=\sum_{i=1}^n\Del_i$ and $S^H_n=\sum_{i=1}^nH(\Del_i)$.
Use bar to denote the normalized sum, as in $\bar S_n=n^{-1}S_n$.
Since $\beta$ is finite in a neighborhood of the origin, by Cram\'{e}r's theorem, $(\Sbar_n,\Sbar_n^H)$ has LDP with a good rate function
(see e.g. \cite[Corollary 6.1.6]{demzei}) $\beta^*$.
Note that $\{\theta_{j-1}t \le N_t < \theta_jt\} \subset \{ S_{\lce\theta_{j-1}t\rce}\le t, S_{\lfl \theta_jt\rfl}\ge t \}$.
Then
\begin{equation}
	\label{eq:Q2j_1}
	Q_2^j(t) \le e^{\al\bar H\eps t} E_P[1_{\{S_{\lce\theta_{j-1}t\rce}\le t, S_{\lce \theta_jt\rce}\ge t\}}e^{\al\sum_{i=1}^nH(\Del_i)}] \le e^{\al\bar H\eps t}E_P[1_{\{S_n\le t\}}e^{\al S_n^H}].
\end{equation}
Let $g$ be the upper semicontinuous function defined as $g(x_1,x_2)=0$ for $0 \le x_1 \le \theta_{j-1}^{-1}$ and $g(x_1,x_2)=-\iy$ otherwise.
Then $1_{\{S_n\le t\}} \le e^{ng(\Sbar_n,\Sbar_n^H)}$.
Hence
\[
Q_2^j(t)\le e^{\al\bar H\eps t}E_P[e^{n(\al \bar S^H_n+g(\Sbar_n,\Sbar_n^H))}].
\]
Since $\Hbar<\infty$ the conditions of Varadhan's integral lemma (see e.g. \cite[Lemma 4.3.6]{demzei}) are valid and hence
\begin{equation}
	\label{eq:Qj2_1_bd}
	\limsup_t\frac{1}{t}\log Q^j_2(t)\le\al\bar H\eps
	+\theta_{j-1}\sup_{x\in\R^2:0 \le x_1\le\theta_{j-1}^{-1}}
	\Big[\al x_2-\beta^*(x)\Big] 
	\le \al\bar H\eps + G_\al^{(2)}(\theta_{j-1}).
\end{equation}

From this and \eqref{eq:Q2_Q2j} we have
\begin{equation*}
	\limsup_t\frac{1}{t}\log Q_2(t) \le\max_{1\le j\le J}\limsup_t\frac{1}{t}\log Q_2^j(t) \le \al\bar H\eps + \sup_{\theta \in [c_0,c_1]} G_\al^{(2)}(\theta).
\end{equation*}
Combining this with the bounds \eqref{eq:thm1}, \eqref{10}, \eqref{11} and \eqref{9} and sending $\eps\to 0$ gives
\begin{align*}
\limsup_t \frac{1}{t}R_{\alpha}(Q_t^N\|P_t^N)
&\le\left[\frac{\bar H c_0}{\al-1}
\vee 0
\vee\sup_{\theta\in[c_0,c_1]}\frac{G_\al^{(2)}(\theta)}{\al(\al-1)}\right] + \frac{c(\al)}{\al(\al-1)}.
\end{align*}
Sending $c_0\to 0$ and $c_1\to\iy$ gives \eqref{eq:renew_2}.

(d) Let  $m = \lfl \theta_j t \rfl$ and $S_m, S^H_m, \bar S_m, \bar S_m^H$ be defined in a similar manner as in part (c). Once more we use the fact that  $(\Sbar_m,\Sbar_m^H)$ has  a LDP with a rate function $\beta^*$.
Note that besides the bound \eqref{eq:Q2j_1}, we also have, with $p^{-1}+q^{-1}=1$, $p,q>1$,
\begin{equation}
	\label{eq:Q2j_2}
	Q_2^j(t) \le \left( E_P[1_{\{S_n\le t, S_m\ge t\}}e^{p\al\sum_{i=1}^mH(\Del_i)}] \right)^{1/p} \left( E_P[1_{\{\theta_{j-1}t\le N_t <\theta_jt\}}e^{-q\al\sum_{i=N_t+1}^mH(\Del_i)}] \right)^{1/q}.
\end{equation}
For the first term on the right hand side, we  apply Varadhan's integral lemma as in the proof of part $(c)$ and get
\begin{equation*}
	\limsup_t\frac{1}{t}\log \left( E_P[1_{\{S_n\le t, S_m\ge t\}}e^{p\al\sum_{i=1}^mH(\Del_i)}] \right)^{1/p} \le \frac{\theta_j}{p}\sup_{x\in\R^2:x_1\ge\theta_j^{-1}}
	\Big[p\al x_2-\beta^*(x)\Big].
\end{equation*}
For the second term on the right hand side of \eqref{eq:Q2j_2}, we have
\begin{align*}
	& \limsup_t\frac{1}{t}\log \left( E_P[1_{\{\theta_{j-1}t\le N_t<\theta_jt\}}e^{-q\al\sum_{i=N_t+1}^mH(\Del_i)}] \right)^{1/q} \\
	& \le \limsup_t\frac{1}{qt}\log \left( [(\theta_j-\theta_{j-1})t+1] \max_{\theta_{j-1}t\le k <\theta_jt} [\gamma(-q\alpha)]^{m-k} \right) \\
	& \le \limsup_t\frac{1}{qt}\log \left( [\eps t+1] ([\gamma(-q\alpha)]^{\eps t} \vee 1) \right) \\
	& = \frac{\log^+\gamma(-q\alpha)}{q}\eps.
\end{align*}
Combining these two bounds with \eqref{eq:Q2j_2} gives
\begin{equation*}
	\limsup_t\frac{1}{t}\log Q_2^j(t) \le \theta_j \sup_{x\in\R^2:x_1\ge\theta_j^{-1}} \Big[\al x_2-\frac{1}{p} \beta^*(x)\Big] + \frac{\log^+\gamma(-q\alpha)}{q}\eps.
\end{equation*}
Combining this with \eqref{eq:Qj2_1_bd}, we have
\begin{align*}
	\limsup_t\frac{1}{t}\log Q^j_2(t)
	& \le \left( \al\bar H\eps +\theta_{j-1}\sup_{x\in\R^2:0 \le x_1\le\theta_{j-1}^{-1}} \Big[\al x_2-\beta^*(x)\Big] \right)  \\
	& \qquad \wedge \left( \theta_j \sup_{x\in\R^2:x_1\ge\theta_j^{-1}} \Big[\al x_2-\frac{1}{p} \beta^*(x)\Big] + \frac{\log^+\gamma(-q\alpha)}{q}\eps\right) 
	\end{align*}
Since $p>1$, we have
\begin{align}
	\limsup_t\frac{1}{t}\log Q^j_2(t)	
	& \le \al\bar H\eps + \frac{\log^+\gamma(-q\alpha)}{q}\eps + \theta_j \left( \sup_{x\in\R^2:0 \le x_1\le\theta_{j-1}^{-1}} \Big[\al x_2-\frac{1}{p}\beta^*(x)\Big] \right. \notag \\
	& \qquad \wedge \left. \sup_{x\in\R^2:x_1\ge\theta_j^{-1}} \Big[\al x_2-\frac{1}{p} \beta^*(x)\Big] \right) \notag \\
	& \le \al\bar H\eps + \frac{\log^+\gamma(-q\alpha)}{q}\eps + \theta_j \sup_{\theta\in[\theta_{j-1},\theta_j]} \left( \sup_{x\in\R^2:0 \le x_1\le\theta^{-1}} \Big[\al x_2-\frac{1}{p}\beta^*(x)\Big] \right. \notag \\
	& \qquad \left. \wedge \sup_{x\in\R^2:x_1\ge\theta^{-1}} \Big[\al x_2-\frac{1}{p} \beta^*(x)\Big] \right), \label{eq:Q2j_1_complicate}
\end{align}
where the second inequality  uses the fact that for a function $r$ defined on $\R$ and constants $a<b$,
one has
\[
\sup_{y\in(-\iy,b]}r(y)\w\sup_{y\in[a,\iy)}r(y)\le\sup_{z\in[a,b]}
\left(\sup_{y\in(-\iy,z]}r(y)\w\sup_{y\in[z,\iy)}r(y)\right).
\]
Since $x \mapsto x_2-\frac{1}{p} \beta^*(x)$ is a concave function on $\Rmb^2$, the last term in \eqref{eq:Q2j_1_complicate} equals
\begin{align*}
	\theta_j \sup_{\theta\in[\theta_{j-1},\theta_j]} \sup_{x_2\in\R} \Big[\al x_2-\frac{1}{p}\beta^*(\theta^{-1},x_2)\Big] 
	& \le \frac{\theta_j}{\theta_{j-1}} \sup_{\theta\in[\theta_{j-1},\theta_j]} \theta \sup_{x_2\in\R} \Big[\al x_2-\frac{1}{p}\beta^*(\theta^{-1},x_2)\Big] \\
	& \le (1+\frac{\eps}{c_0})\sup_{\theta\in[\theta_{j-1},\theta_j]} \theta \sup_{x_2\in\R} \Big[\al x_2-\frac{1}{p}\beta^*(\theta^{-1},x_2)\Big].
\end{align*}
From this, \eqref{eq:Q2_Q2j} and \eqref{eq:Q2j_1_complicate}, letting
\begin{equation*}
	G_\al^{(3)}(p,\theta) \doteq \theta \sup_{x_2\in\R} \Big[\al x_2-\frac{1}{p}\beta^*(\theta^{-1},x_2)\Big],
\end{equation*}
we have
\begin{equation*}
	\limsup_t\frac{1}{t}\log Q_2(t) \le\max_{1\le j\le J}\limsup_t\frac{1}{t}\log Q_2^j(t) \le \al\bar H\eps + \frac{\log^+\gamma(-q\alpha)}{q}\eps + (1+\frac{\eps}{c_0}) \sup_{\theta \in [c_0,c_1]} G_\al^{(3)}(p,\theta).
\end{equation*}
Combining this with the bounds \eqref{eq:thm1}, \eqref{10}, \eqref{11} and \eqref{9} and sending $\eps\to 0$ gives
\begin{equation*}
	\limsup_t\frac{1}{t} R_{\alpha}(Q_t^N\|P_t^N)
	\le\left[\frac{\bar H c_0}{\al-1}
	\vee 0
	\vee\sup_{\theta\in[c_0,c_1]}\frac{G_\al^{(3)}(p,\theta)}{\al(\al-1)}\right] + \frac{c(\al)}{\al(\al-1)}.
\end{equation*}
Now we claim that
\begin{equation}
	\label{eq:G_claim}
	\lim_{p \to 1} \sup_{\theta\in[c_0,c_1]}G_\al^{(3)}(p,\theta) = \sup_{\theta\in[c_0,c_1]}G_\al^{(3)}(\theta).
\end{equation}
Once this claim is verified, sending $p \to 1$, $c_0\to 0$ and $c_1\to\iy$ gives \eqref{eq:renew_3}.

It remains to prove the claim \eqref{eq:G_claim}.
First note that for any $\theta_0 \in [c_0,c_1]$ and $x_2 \in \Rmb$,
\begin{equation*}
	\liminf_{p \to 1} \sup_{\theta\in[c_0,c_1]}G_\al^{(3)}(p,\theta) \ge \liminf_{p \to 1} \theta_0 \Big[\al x_2-\frac{1}{p}\beta^*(\theta^{-1}_0,x_2)\Big] = \theta_0 \Big[\al x_2-\beta^*(\theta^{-1}_0,x_2)\Big].
\end{equation*}
Taking supremum over $x_2 \in \Rmb$ and $\theta_0 \in [c_0,c_1]$ gives
\begin{equation}
	\label{eq:G_claim_liminf}
	\liminf_{p \to 1} \sup_{\theta\in[c_0,c_1]}G_\al^{(3)}(p,\theta) \ge \sup_{\theta \in [c_0,c_1]} G_\alpha^{(3)}(\theta).
\end{equation}
Since $\beta^*$ is a good rate function, we can find $\kappa_0 \in (-\infty, 0)$ such that, for all $p \in [1/2,2]$,
$\sup_{\theta\in[c_0,c_1]}G_\al^{(3)}(p,\theta) \ge \kappa_0$.

Next we show $\limsup_{p \to 1} \sup_{\theta\in[c_0,c_1]}G_\al^{(3)}(p,\theta) \le \sup_{\theta \in [c_0,c_1]} G_\alpha^{(3)}(\theta)$.
For $p \in [1/2,2]$, $p\neq 1$, let $\theta_p \in [c_0,c_1]$ and $x_{2,p} \in \Rmb$ be such that
\begin{equation}
    \label{eq:kappa0}
	\kappa_0 \le \sup_{\theta\in[c_0,c_1]}G_\al^{(3)}(p,\theta) \le \theta_p \Big[\al x_{2,p}- \frac{1}{p}\beta^*(\theta^{-1}_p,x_{2,p}) \Big] + |p-1|.
\end{equation}
From $\beta^*\ge 0$ we have $x_{2,p} \ge \frac{\kappa_0-1}{c_0\alpha}$.
Since $\beta^*(x_1,x_2) \ge \lambda_2 x_2 - \beta(0,\lambda_2)$ for each $\lambda_2 > 0$, and $\beta(0,\la_2)<\infty$ for all $\la_2>0$, we have
 $\lim_{x_2 \to \infty} \inf_{x_1 \in \Rmb} \frac{\beta^*(x_1,x_2)}{x_2} = \infty$. This shows that $x_{2,p}$ is bounded from above, since
 if $x_{2,p}\to \infty$ as $p\to 1$ then from \eqref{eq:kappa0} we must have
 $$\limsup_{p\to 1} \frac{\beta^*(\theta^{-1}_p,x_{2,p}) }{px_{2,p}} \le \theta_p\al$$
 which is a contradiction. 
 Hence $x_{2,p} \le \kappa_1$ for some $\kappa_1 < \infty$ and
therefore the sequence $\{(\theta_p,x_2^p)\}$ is bounded.
Assume without loss of generality that $(\theta_p,x_2^p) \to (\thetabar,\xbar_2) \in [c_0,c_1]\times\Rmb$ along the whole subsequence.
Then
\begin{align*}
	\limsup_{p \to 1} \sup_{\theta\in[c_0,c_1]}G_\al^{(3)}(p,\theta) & \le \limsup_{p \to 1} \left( \theta_p \Big[\al x_{2,p}- \frac{1}{p}\beta^*(\theta^{-1}_p,x_{2,p}) \Big] + |p-1| \right) \\
	& = \thetabar \Big[\al \xbar_2- \liminf_{p \to 1}\beta^*(\theta^{-1}_p,x_{2,p}) \Big] \\
	& \le \thetabar \Big[\al \xbar_2- \beta^*(\thetabar^{-1},\xbar_2) \Big] \\
	& \le \sup_{\theta \in [c_0,c_1]} G_\alpha^{(3)}(\theta),
\end{align*}
where the second inequality follows from the lower semicontinuity of $\beta^*$.
Combining this with \eqref{eq:G_claim_liminf} gives the claim \eqref{eq:G_claim}.
This completes the proof.
\qed
\subsection{Proofs of results from \S \ref{sec:exam}}\label{sec-a3}
In this section we provide details of some of the calculations that were omitted from Section \ref{sec:exam}.\\

\noi{\bf Proofs of Statements in Example \ref{exa:exp}.}
We first consider the case $\rho>1$ and show that the right sides of \eqref{eq:renew_2} and \eqref{eq:renew_3} are the same and the inequalities in both cases can be replaced by equalities.

Note that $h(x) = \rho > 0$, $H(x) = -(\rho-1)x + \log \rho \le \log \rho < \infty$ and $\beta$ is finite in a neighborhood of the origin, namely all assumptions for \eqref{eq:renew_2} hold.
	Then, as follows from \eqref{202}, \eqref{203} in the appendix,
	\begin{align*}
	 	\frac{1}{t}R_{\alpha}(Q_t^N\|P_t^N) & = \frac{1}{\al(\al-1)t}\log E_P[\La_t^\al] = \frac{1}{\al(\al-1)t}\log E_P[e^{\alpha (1-\rho) t + \alpha N_t \log \rho}] \\
	 	& = \frac{1}{\al(\al-1)} \left[ \rho^\alpha - 1 - \alpha(\rho-1) \right]. 
	\end{align*}
	Thus from Theorem \ref{thm1}(c) $ \rho^\alpha - 1 - \alpha(\rho-1)  \le \sup_{\theta \in (0,\infty)} G_\alpha^{(2)}(\theta)$.
	Now we show the reverse inequality.
	
	If $\theta \ge 1+\alpha(\rho-1)$, taking $\lambda_1 = 1+\alpha(\rho-1)-\theta \le 0$ in \eqref{eq:rmk_1} gives
	\begin{align*}
		G_\alpha^{(2)}(\theta) & \le -\lambda_1 + \theta \beta(\lambda_1,\alpha) = -1-\alpha(\rho-1)+\theta + \theta \log E_P[e^{(1-\theta) \Del + \alpha \log \rho}] \\
		& = -1-\alpha(\rho-1)+\theta + \theta [\alpha \log \rho - \log \theta] \le \rho^\alpha -1-\alpha(\rho-1),
	\end{align*}
	where the last inequality becomes equality when $\theta = \rho^\alpha$.
	If $0 < \theta < 1+\alpha(\rho-1)$, taking $\lambda_1 = 0$ in \eqref{eq:rmk_1} gives
	\begin{equation*}
		G_\alpha^{(2)}(\theta) \le \theta \beta(0,\alpha) = \theta \log E_P[e^{-\alpha(\rho-1) \Del + \alpha \log \rho}] = \theta \log \frac{\rho^\alpha}{1+\alpha(\rho-1)} \le [1+\alpha(\rho-1)] \log \frac{\rho^\alpha}{1+\alpha(\rho-1)},
	\end{equation*}	
	where the last inequality follows from the fact that $\rho^\alpha = (1+(\rho-1))^\alpha \ge 1+\alpha(\rho-1)$.
	Using $\ell(x) \doteq x\log x - x +1 \ge 0$, the term on the right side of the last display can be written as
	\begin{align*}
		- \rho^\alpha \ell(\frac{1+\alpha(\rho-1)}{\rho^\alpha}) - [1+\alpha(\rho-1)] + \rho^\alpha \le \rho^\alpha -1-\alpha(\rho-1).
	\end{align*}
	Therefore $\sup_{\theta \in (0,\infty)} G_\alpha^{(2)}(\theta) \le \rho^\alpha -1-\alpha(\rho-1)$.
	Thus we have shown that the  inequality  in \eqref{eq:renew_2} is in fact an equality.
	From this and the observation that $G_\alpha^{(2)}(\theta) \ge G_\alpha^{(3)}(\theta)$ we see that in \eqref{eq:renew_3} also the inequality can be replaced  with an equality.
	
Consider now the case $\rho \in (0,1]$.	 We show that once more the right sides of \eqref{eq:renew_2} and \eqref{eq:renew_3} are the same and the inequalities in both cases can be replaced by equalities.
	The proof of $\sup_{\theta \in (0,\infty)} G_\alpha^{(2)}(\theta) \le \rho^\alpha -1-\alpha(\rho-1)$ for \eqref{eq:renew_2} is exactly as before.
	For \eqref{eq:renew_3}, observe first  that
	\begin{align*}
		\beta(\lambda_1,\lambda_2) & = \lambda_2 \log \rho - \log \left[ 1-\lambda_1+\lambda_2(\rho-1) \right], \quad 1-\lambda_1+\lambda_2(\rho-1) > 0, \\
		\beta^*(x_1,\log \rho - (\rho-1)x_1) & = x_1-1-\log x_1, \quad x_1 > 0.
	\end{align*}
	Therefore
	\begin{align*}
		\sup_{\theta \in (0,\infty)} G_\alpha^{(3)}(\theta) & \ge G_\alpha^{(3)}(\rho^\alpha) = \rho^\alpha \sup_{x_2 \in \Rmb} \left[ \alpha x_2 - \beta^*(\frac{1}{\rho^{\alpha}},x_2) \right] \\
		& \ge \rho^\alpha \left[ \alpha (\log \rho - \frac{\rho-1}{\rho^\alpha}) - \beta^*(\frac{1}{\rho^{\alpha}},\log \rho - \frac{\rho-1}{\rho^\alpha}) \right] \\
		& = \rho^\alpha \left[ \alpha (\log \rho - \frac{\rho-1}{\rho^\alpha}) - (\frac{1}{\rho^{\alpha}}-1+\log \rho^{\alpha}) \right] = -\alpha(\rho-1)-1+\rho^\alpha,
	\end{align*}
	and hence
	\begin{equation*}
		LHS\eqref{eq:renew_3} = \frac{\rho^\alpha - 1 - \alpha(\rho-1)}{\alpha(\alpha-1)} = RHS\eqref{eq:renew_3} = RHS\eqref{eq:renew_2}.
	\end{equation*}
\qed	
\\ \ \\

\noi{\bf Proof of \eqref{eq:bdforgam} in Example \ref{ex:32}.}
	We will use Theorem \ref{thm1}(c)
	and establish \eqref{eq:bdforgam} by estimating
$\sup_{\theta \in (0,\infty)} G_\alpha^{(2)}(\theta)$.

	If $\theta \ge \frac{1+\alpha(\rho-1)}{1+\alpha(k-1)}$, taking $\lambda_1 = 1+\alpha(\rho-1)-\theta[1+\alpha(k-1)] \le 0$ in \eqref{eq:rmk_1} gives
	\begin{align*}
		G_\alpha^{(2)}(\theta) & \le -\lambda_1 + \theta \beta(\lambda_1,\alpha) \\
		& = -1-\alpha(\rho-1) + \theta[1+\alpha(k-1)] + \theta \left\{ \alpha k \log \rho + \log \left( \frac{\Gam(1+\alpha(k-1))}{(\Gam(k))^\alpha} \right) \right. \\
		& \qquad \left. - (1+\alpha(k-1)) \log [ \theta (1+\alpha(k-1)) ] \right\} \\
		& \le \left( \frac{\Gam(1+\alpha(k-1))}{(\Gam(k))^\alpha} \rho^{\alpha k} \right)^{\frac{1}{1+\alpha(k-1)}} -\alpha(\rho-1) - 1,
	\end{align*}
	where the last inequality is attained for $\theta = \theta^*$ that satisfies
	\begin{equation}
		\label{eq:eg_temp}
		\alpha k \log \rho + \log \left( \frac{\Gam(1+\alpha(k-1))}{(\Gam(k))^\alpha} \right) - (1+\alpha(k-1)) \log [ \theta^* (1+\alpha(k-1)) ] = 0.
	\end{equation}
	One can check that  $\theta^*$  is indeed greater than or equal to $\frac{1+\alpha(\rho-1)}{1+\alpha(k-1)}$.
To see this, note that the left hand side in \eqref{eq:eg_temp} is decreasing in $\theta$. So it suffices to check
	$$\alpha k \log \rho + \log \left( \frac{\Gam(1+\alpha(k-1))}{(\Gam(k))^\alpha} \right) - (1+\alpha(k-1)) \log [ (1+\alpha(\rho-1)) ] \ge 0.$$
	This is equivalent to checking
	$\beta(0,\alpha) \ge 0.$
	But this is immediate since
	$$\beta(0,\alpha) = \log \int (g(x)e^x)^\alpha e^{-x} \, dx \ge \log \left( \int g(x)e^x e^{-x} \, dx \right)^\alpha=0$$
	by Holder's inequality.

	If $0 < \theta < \frac{1+\alpha(\rho-1)}{1+\alpha(k-1)}$, taking $\lambda_1 = 0$ in \eqref{eq:rmk_1} gives
	\begin{equation}
		\label{eq:eg_temp2}
		G_\alpha^{(2)}(\theta) \le \theta \beta(0,\alpha) = \theta \left[ \alpha k \log \rho + \log \left( \frac{\Gam(1+\alpha(k-1))}{(\Gam(k))^\alpha} \right) - (1+\alpha(k-1)) \log (1+\alpha(\rho-1))  \right].
	\end{equation}	
	Since the left hand side in \eqref{eq:eg_temp} is decreasing in $\theta$, the expression obtained by replacing
	$\theta$ by $\frac{1+\alpha(\rho-1)}{1+\alpha(k-1)}$ in this term
	 is nonnegative, which shows that the term on the right side of \eqref{eq:eg_temp2} is nonnegative.
	Therefore
	\begin{align*}
		G_\alpha^{(2)}(\theta) & \le \frac{1+\alpha(\rho-1)}{1+\alpha(k-1)} \left[ \alpha k \log \rho + \log \left( \frac{\Gam(1+\alpha(k-1))}{(\Gam(k))^\alpha} \right) - (1+\alpha(k-1)) \log (1+\alpha(\rho-1))  \right] \\
		& = -[1+\alpha(\rho-1)] \log \left\{ [1+\alpha(\rho-1)] \Big/ \left( \frac{\Gam(1+\alpha(k-1))}{(\Gam(k))^\alpha} \rho^{\alpha k} \right)^{\frac{1}{1+\alpha(k-1)}} \right\} \\
		& = - \left( \frac{\Gam(1+\alpha(k-1))}{(\Gam(k))^\alpha} \rho^{\alpha k} \right)^{\frac{1}{1+\alpha(k-1)}} \ell\left( [1+\alpha(\rho-1)] \Big/ \left( \frac{\Gam(1+\alpha(k-1))}{(\Gam(k))^\alpha} \rho^{\alpha k} \right)^{\frac{1}{1+\alpha(k-1)}} \right) \\
		& \qquad - [1+\alpha(\rho-1)] + \left( \frac{\Gam(1+\alpha(k-1))}{(\Gam(k))^\alpha} \rho^{\alpha k} \right)^{\frac{1}{1+\alpha(k-1)}} \\
		& \le \left( \frac{\Gam(1+\alpha(k-1))}{(\Gam(k))^\alpha} \rho^{\alpha k} \right)^{\frac{1}{1+\alpha(k-1)}} - \alpha(\rho-1) - 1,
	\end{align*}
	where the third line uses the equality 
	\begin{equation}\label{eq:eq353}
		-a \log (a/b) = - b \ell(a/b) -a+b. \end{equation}
	Combining the above estimates with Theorem \ref{thm1}(c) we now have that when $\pi=$ Gamma$(k,\rho)$ with $k \ge 1$ and $\rho>1$
	$$ r_\al^N(Q\|P) \le \sup_{\theta \in (0,\infty)} \frac{1}{\alpha(\alpha-1)} G_\alpha^{(2)}(\theta) \le \frac{1}{\alpha(\alpha-1)} \left[ \left( \frac{\Gam(1+\alpha(k-1))}{(\Gam(k))^\alpha} \rho^{\alpha k} \right)^{\frac{1}{1+\alpha(k-1)}} -\alpha(\rho-1) - 1 \right].$$
\qed
\\ \ \\

\noi{\bf Proof of \eqref{eq:bdfphas} in Example \ref{lem:convert_to_exp}.}
Consider first $\theta \ge 1+\alpha(\sigma-1)$. In this case, taking $\lambda_1 = 1+\alpha(\sigma-1)-\theta \le 0$ in \eqref{eq:rmk_1}, we have
	\begin{equation*}
		G_\alpha^{(2)}(\theta) \le -\lambda_1 + \theta \beta(\lambda_1,\alpha) \le -1-\alpha(\sigma-1)+\theta + \theta [\alpha \log C - \log \theta] \le C^\alpha -1-\alpha(\sigma-1),
	\end{equation*}
	where the last inequality is attained when $\theta = C^\alpha$.
	Note that $C^{\alpha}$ is indeed in the range $[1+\alpha(\sigma-1), \infty)$. To see this note that
	\begin{equation*}
		1 = \int g(x) \, dx \le \int Ce^{-\sigma x} \, dx = \frac{C}{\sigma}
	\end{equation*}
	which shows rhat
	\begin{equation}
		\label{eq:convert_to_exp_temp}
		C^\alpha \ge \sigma^\alpha = (1+\sigma-1)^\alpha \ge 1 + \alpha(\sigma-1).
	\end{equation}
	
	Now consider the case $0 < \theta < 1+\alpha(\sigma-1)$. In this case, taking $\lambda_1 = 0$ in \eqref{eq:rmk_1}, we have
	\begin{equation*}
		G_\alpha^{(2)}(\theta) \le \theta \beta(0,\alpha) \le \theta \log \frac{C^\alpha}{1+\alpha(\sigma-1)} \le [1+\alpha(\sigma-1)] \log \frac{C^\alpha}{1+\alpha(\sigma-1)},
	\end{equation*}	
	where the last inequality follows from \eqref{eq:convert_to_exp_temp}.
	Recalling $\ell(x) = x\log x - x +1 \ge 0$ and using the equality in \eqref{eq:eq353} once more, from the last display we have
	for all $0 < \theta < 1+\alpha(\sigma-1)$
	\begin{equation*}
		G_\alpha^{(2)}(\theta) \le - C^\alpha \ell(\frac{1+\alpha(\sigma-1)}{C^\alpha}) - [1+\alpha(\sigma-1)] + C^\alpha \le C^\alpha -1-\alpha(\sigma-1).
	\end{equation*}
	Combining the above estimates with Theorem \ref{thm1}(c) we have the bound \eqref{eq:bdfphas} on RDR for this class of models.
\qed
\skp

\noi{\bf Acknowledgment.}
Research of RA supported in part by the ISF (grant 1184/16).
Research of AB supported in part by the NSF (DMS-1305120, DMS-1814894, DMS-1853968).
Research of PD supported in part by the NSF (DMS-1904992) and 
AFOSR (FA-9550-18-1-0214).
Research of RW supported in part by the DARPA (W911NF-15-2-0122).

\setstretch{0.0}


\bibliographystyle{is-abbrv}

\end{document}